\numberwithin{equation}{section}
\numberwithin{figure}{section}
\renewcommand{\subsection}[1]{\vspace{3mm}\refstepcounter{subsection}\noindent{\bf \thesubsection. #1.} }
\renewcommand{\subsubsection}[1]{\vspace{3mm}\refstepcounter{subsubsection}\noindent{\bf \thesubsubsection. #1.} }
\numberwithin{equation}{section}
\newtheorem{theorem}{Theorem}
\newtheorem{lemma}[theorem]{Lemma}
\newtheorem{proposition}[theorem]{Proposition}
\theoremstyle{definition}
\newtheorem{definition}[theorem]{Definition}
\theoremstyle{remark*}
\newtheorem*{remark*}{Remark}
\newtheorem*{example*}{Example}
\newtheorem*{thm}{Main Theorem}
\def\PP{\mathbb P}
\DeclareMathOperator{\GL}{GL}
\def\min{\mathop{\mathrm{min}}}
\def\PP{\mathbb P}
\def\cal{\mathcal }
\def\p{\mathbf p}
\newcommand{\bfu}{\mathbf{u}}
\newcommand{\exc}{\operatorname{exc}}
\begin{document}
\title[Vojta's abc   conjecture for entire curves in  toric varieties ]{Vojta's abc conjecture for entire curves  in  toric varieties highly ramified over the boundary}
 
\author{Min Ru}
\address{
 Department  of Mathematics\newline
\indent University of Houston\newline
\indent Houston,  TX 77204, U.S.A.} 
\email{mru2@central.uh.edu}
\author{Julie Tzu-Yueh Wang}
\address{Institute of Mathematics, Academia Sinica \newline
\indent No.\ 1, Sec.\ 4, Roosevelt Road\newline
\indent Taipei 10617, Taiwan}
\email{jwang@math.sinica.edu.tw}
\thanks{2020\ {\it Mathematics Subject Classification}: Primary 32H30; Secondary 32Q45 and 30D35}
\thanks{The research of Min Ru is supported in part by Simon Foundations grant award \#531604 and \#521604.
The research of  Julie Tzu-Yueh Wang was supported in part by Taiwan's NSTC grant  113-2115-M-001-011-MY3.}

\begin{abstract} We prove Vojta's  abc conjecture  for  projective space ${\Bbb P}^n({\Bbb C})$,  assuming  that the entire curves in ${\Bbb P}^n({\Bbb C})$ are highly ramified over the coordinate
hyperplanes. This extends the results of Guo Ji and the second-named author for the case  $n=2$ (see \cite{GW22}).    We also explore the corresponding results  for   projective toric varieties.  Consequently, we establish a version of Campana's orbifold conjecture for  finite coverings of projective toric varieties. 
 \end{abstract}

\maketitle
\baselineskip=16truept

 \section{Introduction}\label{sec:intro}
  Let $X$ be an algebraic variety and  $D$ be an effective divisor (possibly empty) on $X$.   The set $X\backslash D$ is {\it Brody hyperbolic}  
if every entire curve $f:\mathbb C\to X\backslash D$  is constant.   It is  {\it Brody  quasi-hyperbolic}  if 
every entire curve $f:\mathbb C\to X\backslash D$  is degenerate, meaning  its image is contained a proper Zariski-closed subset $Z$ of $X$.   If such a subset $Z$ can be chosen independently of $f$, then
$X\backslash D$ is said to be {\it Brody strong quasi-hyperbolic} and  $Z$   is called an  {\it exceptional set}.    This leads us to the Griffiths-Lang's conjecture.

\noindent{\bf Griffiths-Lang's conjecture}. {\it   Let $X$ be a smooth complex projective variety,  $D$  a normal crossing divisor on $X$, and $K_X$  a  canonical divisor on $X$.  If $K_X+D$ is big, then $X\backslash D$ is Brody  quasi-hyperbolic.}

A more precise quantitative version,  in terms of the Nevanlinna's functions, can be formulated as follows.

 \noindent{\bf Griffiths' conjecture}. {\it  For a holomorphic map  $f:\mathbb C\to X$ with Zariski dense image,  the inequality 
 $$ T_{K_X+D,f}(r) \leq_{\rm exc} N_f(D, r) +\epsilon T_{ A,f}(r)$$ 
 holds for any $\epsilon >0$, where  $A$ is an ample divisor on $X$, and  the notation}    $\leq_{\rm exc}$ means the inequality holds  for all $r\in (0, \infty)$ except for a set  of finite measure. 

 Here, 
for each positive integer $n$, $N_f^{(n)}(D,r)$ is the $n$-truncated counting function with respect to $D$   given by
	$$
		N_f^{(n)}(D,r)=\sum_{0<|z|<r}\min\{{\rm ord}_z f^*D, n\}\log\frac{r}{|z|}+\min\{{\rm ord}_0 f^*D, n\}\log r, $$
		$N_f(D,r)$ is the  counting function with respect to $D$   given by $N_f^{(\infty)}(D,r)$,
	and
	$T_{D,f}(r)$   is the (Nevanlinna) height function relative to the divisor $ D$ (referring to \cite[Section 12]{Vojta}).

Paul Vojta formulated the following general abc conjecture (see  \cite[Conjecture 15.2]{Vojta} and \cite[Conjecture 23.4]{Vojta}), which extends the above Griffiths' conjecture. It is also 
 viewed as the   quantitive version of Campana's orbifold conjecture which will be stated later in the introduction. 

 \noindent{\bf Vojta's  abc conjecture}. {\it      Let $X$ be a smooth complex projective variety,  $D$  a normal crossing divisor on $X$, and $K_X$  a canonical divisor on $X$. 
 For any $\epsilon>0$, there exists a proper Zariski-closed subset $Z$ of $X$ such that  for any entire curve $f:\mathbb C\to X$ whose image is not contained in $Z$, the inequality
 $$ T_{K_X+D,f}(r) \le_{\rm exc}
 N_f^{(1)}(D,r)+ \epsilon T_{ A,f}(r)$$ holds for any $\epsilon>0$, 
 where $A$ is an ample divisor on $X$. 
}

  Note that Vojta's  abc conjecture strengthens Griffiths' conjecture in two key ways: (i) the exceptional set $Z$ above is independent of the map 
$f$, and (ii) the counting function is truncated at level one. Proving these two features is quite challenging, even if the aforementioned version of Griffiths' conjecture could be established. In this paper, we first prove Vojta's abc conjecture for entire curves in ${\Bbb P}^n({\Bbb C})$, 
assuming the map is highly ramified over the coordinate hyperplanes. The precise statement of our main result is  stated below. Additionally, we highlight that the exceptional set obtained in the theorem can be determined explicitly-this is  a challenging issue in the study of Nevanlinna theory and complex hyperbolicity.

 \begin{thm}\label{thm}
Let $G$ be a non-constant  homogeneous polynomial   in $\mathbb C[x_0,\dots, x_n]$ with no monomial factors and no repeated factors.   
Assume that  $[G=0]$  is in general position with the coordinate hyperplanes $H_i=[x_i=0]$ in $\mathbb P^n(\mathbb C)$, $0\le i\le n$.  
 Then, for any $\epsilon >0$, there exist  an effectively computable positive integer $\ell$ and a proper Zariski closed subset $Z$ of $\mathbb{P}^n({\Bbb C})$  that can be  explicitly determined, such that the following two inequalities hold: 
\begin{itemize}
	\item[\rm (i) ] $N_{G(\mathbf{f})}(0,r) -N^{(1)}_{G(\mathbf{f})}(0,r)\le_{\exc} \epsilon T_{f}(r)$, and 
	\item[\rm (ii) ] $ (\deg  G- \epsilon)T_{f}(r) \le_{\exc} N^{(1)}_{G(\mathbf{f})}(0,r)$,
\end{itemize} 
  for any non-constant  holomorphic map  $f: {\Bbb C} \rightarrow {\Bbb P}^n({\Bbb C})$
   whose  image  is not contained in $Z$, and where $ {\rm mult}_{z_0}(f^*H_i) \ge   \ell$ for all $i$ with $0\leq i\leq n$ and 
for every $z_0 \in {\Bbb C}$  with $f(z_0) \in  H_i$. 
Here, the evaluation of $G$ is taken at $\mathbf{f}=(f_0,\hdots,f_n)$ in a reduced form of $f$,  meaning that  $f_0,\hdots,f_n$ are entire functions without common zeros. 
  \end{thm}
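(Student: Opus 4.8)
The plan is to reduce both estimates to a single inequality in the spirit of Vojta's abc conjecture for the divisor $D=[G=0]+\sum_{i=0}^{n}H_i$ on $\mathbb P^n$, and then to prove that inequality by Cartan's Second Main Theorem in a Veronese embedding, using the high ramification over the $H_i$ to discard every term except the one attached to $[G=0]$. Concretely, it suffices to establish
\[
(\deg G)\,T_f(r)\le_{\exc}N^{(1)}_{G(\mathbf f)}(0,r)+\sum_{i=0}^{n}N^{(1)}_{f_i}(0,r)+\epsilon T_f(r)
\]
for all $f$ with $f(\mathbb C)\not\subset Z$ satisfying the ramification hypothesis: for then $\operatorname{mult}_{z_0}(f^*H_i)\ge\ell$ gives $N^{(1)}_{f_i}(0,r)\le\tfrac1\ell N_{f_i}(0,r)\le\tfrac1\ell T_f(r)+O(1)$, so choosing $\ell>(n+1)/\epsilon$ absorbs the $H_i$--sum and yields (ii); and (i) is then immediate from the First Main Theorem, since $N_{G(\mathbf f)}(0,r)=(\deg G)T_f(r)-m_{G(\mathbf f)}(0,r)+O(1)\le(\deg G)T_f(r)+O(1)$ forces $N_{G(\mathbf f)}(0,r)-N^{(1)}_{G(\mathbf f)}(0,r)\le_{\exc}\epsilon T_f(r)$. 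This reduction also explains the hypotheses on $G$: if $x_i\mid G$, or if $G$ has a repeated factor, the ramification of $f$ over $H_i$ (resp. over that factor) would be carried into the multiplicities of $G(\mathbf f)$ and (i) would fail.

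To prove the displayed inequality, assume first that $f$ is algebraically non--degenerate. Fix an integer $s$ divisible by $\deg G$ and large in terms of $n,\deg G,\epsilon$, and let $F=\Phi\circ f\colon\mathbb C\to\mathbb P^N$, $N+1=\binom{n+s}{n}$, be the composite with the degree--$s$ Veronese embedding $\Phi$, so that $F$ is linearly non--degenerate. Apply Cartan's Second Main Theorem --- in the form retaining the Wronskian counting term $N_{W_F}(0,r)$ --- to the hyperplanes of $\mathbb P^N$ coming from a basis of $H^0(\mathbb P^n,\mathcal O(s))$ adapted simultaneously to the filtration by powers of $G$ and to the coordinate--monomial filtrations, following the construction of Corvaja--Zannier and Evertse--Ferretti (as developed for holomorphic curves by Ru and Ru--Vojta). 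The general position of $[G=0]$ with $H_0,\dots,H_n$ is precisely what permits such an adapted basis and makes the weight attached to $[G=0]$ tend to $\deg G$ as $s\to\infty$. The ramification now enters twice. First, the counting functions over the $H_i$ enter the inequality only truncated at a fixed level $M=M(n,\deg G,\epsilon)$, so $N^{(M)}_f(H_i,r)\le\tfrac M\ell N_f(H_i,r)\le\tfrac M\ell T_f(r)+O(1)$ is negligible for $\ell>(n+1)M/\epsilon$. Second --- and this is the crux --- at any $z_0$ where $f$ meets $[G=0]$ with multiplicity $\mu$ the $G$--adapted coordinate of $F$ vanishes to high order, forcing $W_F$ to vanish at $z_0$, which is the mechanism absorbing $\mu-1$; meanwhile the forced tangency of $f$ to each $H_i$ (to order $\ge\ell$) already makes $W_F$ vanish along $f^{-1}(\bigcup_iH_i)$ to an amount matching, up to a truncated error, the $H_i$--contribution to the right side of Cartan's inequality.

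Combining, one compares the two resulting bounds for $N_{W_F}(0,r)$; the bulk terms --- those carried by $N_f(\sum_iH_i,r)$, and by $N_{G(\mathbf f)}(0,r)$, the latter absorbed via $m_{G(\mathbf f)}(0,r)\ge0$ --- cancel, and the remainder on the right is $O\!\big(N^{(1)}_f(\textstyle\sum_iH_i,r)\big)+o(T_f(r))$ plus an $[G=0]$--truncation error divided by the (growing) weight of $[G=0]$; both pieces are $\le\epsilon T_f(r)$, which gives $(\deg G)(1-\epsilon)T_f(r)\le_{\exc}N^{(1)}_f(D,r)$, hence the displayed inequality. This is the exact higher--dimensional analogue of the one--variable fact that Nevanlinna's ramification term, after subtracting the ramification forced over $\{x_i=0\}$, bounds the level--one truncation defect by $O(T_f(r)/\ell)$; for $n=2$ it is what is proved in \cite{GW22}, and the extension to general $n$ is where the filtration/Wronskian analysis becomes substantially heavier.

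The remaining (algebraically degenerate) case is handled by a standard descent to a subvariety: if $f(\mathbb C)$ lies in a proper irreducible $V\not\subset Z$, one reruns the argument on a desingularization of $V$ with the restrictions of the $H_i$ and $[G=0]$, which stay in the position required for the adapted--basis construction exactly because $V\not\subset Z$; the finitely many degeneracy loci arising at each stage from Cartan's theorem and the adapted--basis construction, together with those from the descent, are collected into the explicit set $Z$, the recursion terminating on dimension, while $\ell$ is determined by $n,\deg G,\epsilon$ and the effective level $M$. The main obstacle is the matching in the third step: one must choose $s$ and the adapted hyperplane family so that the Wronskian vanishing forced by the high ramification over the $H_i$ agrees with the Wronskian bound coming out of Cartan's theorem closely enough that only level--one counting functions --- which the ramification then kills --- survive, instead of a leftover of size $O(T_f(r))$.
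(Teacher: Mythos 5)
Your reduction of (i) and (ii) to the single inequality $(\deg G)\,T_f(r)\le_{\exc}N^{(1)}_{G(\mathbf f)}(0,r)+\sum_i N^{(1)}_{f_i}(0,r)+\epsilon T_f(r)$ is fine, but the proof you offer for that inequality has a genuine gap at its crux. You assert that, in the Veronese/filtration framework (Corvaja--Zannier, Evertse--Ferretti, Ru--Vojta) with Cartan's theorem retaining the Wronskian term, a zero of $G(\mathbf f)$ of multiplicity $\mu$ forces $W_F$ to vanish enough to ``absorb $\mu-1$,'' and that the ramification over the $H_i$ forces Wronskian vanishing that matches the $H_i$-contribution up to truncated errors. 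Neither claim is substantiated, and the first is precisely the open difficulty: the filtration method controls proximity functions and yields counting functions truncated at the dimension of the auxiliary linear system, but it is not known to produce level-one truncation for the divisor $[G=0]$; the Wronskian vanishing at a point is bounded independently of $\mu$, and no choice of $s$ or of adapted basis is exhibited that converts it into the full difference $N_{G(\mathbf f)}-N^{(1)}_{G(\mathbf f)}$. If that bookkeeping worked as stated, it would essentially prove Vojta's abc inequality with truncation one from Cartan's theorem, without using the high ramification over the $H_i$ in any essential structural way. The paper's actual mechanism is different: it deletes $f^{-1}(\sum_i H_i)$ from $\mathbb C$ to obtain a parabolic Riemann surface $\mathcal Y_f$ on which the coordinate ratios $u_i=f_i/f_0$ are units, and obtains the truncation gain from a GCD estimate for the coprime pair $G$ and its logarithmic derivative $D_{\mathbf u}(G)$ (Theorem \ref{movinggcd} via the moving-target second main theorem, Theorem \ref{movingsmt0}); the ramification hypothesis enters only through the Euler-characteristic bound $\mathfrak{X}^+_\sigma(r)\le\frac{n+1}{\ell}T_f(r)+O(\log r)$ of Proposition \ref{exhuastion}.

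A second gap concerns the exceptional set and the degenerate case. You handle algebraically degenerate maps by a dimension descent onto an irreducible $V$ containing $f(\mathbb C)$, rerunning the argument on a desingularization of $V$; but the statement being proved is specific to $\mathbb P^n$ with its coordinate hyperplanes, the general-position and ramification hypotheses do not transfer to $(V,[G=0]\cap V,H_i\cap V)$ in a usable form, and collecting degeneracy loci ``at each stage'' over all possible $V$ produces a priori a countable union rather than a single proper Zariski closed subset independent of $f$. In the paper the $f$-independence of $Z$ is not obtained by descent at all: it is inherited from the explicitly determined exceptional sets of the unit-theoretic results (Theorems \ref{mainthmK} and \ref{proxi}), which depend only on $\epsilon$ and $G$ and not on the parabolic surface $\mathcal Y_f$, with the degenerate reductions performed there by monomial changes of variables ($\GL_n(\mathbb Z)$ transformations) on the units rather than by restricting to subvarieties. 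As written, your proposal therefore assumes the two hardest features of the theorem (level-one truncation and a map-independent exceptional set) rather than proving them.
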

  We note that this  theorem corresponds to a case of  Vojta's abc conjecture for entire curves with $D=H_0+\cdots+H_{n}+[G=0]$, so that  $K_{\mathbb P^n+D}$ is linearly equivalent to $[G=0]$. 
The case  $n=2$ was previously established by   Ji Guo and the second named author (\cite[Theorem 1.4]{GW22}). 
 The new idea in this paper is motivated by  
 the recent paper  \cite{CHSX}, where the authors  considered  the parabolic Riemann surface $\mathcal Y_f:=\mathbb C\setminus{f^{-1}(D)}$,  reducing the problem  to the omitting case.  To achieve this in our setting, we need to extend the results in \cite{GNSW}  and \cite{GW22}  to the parabolic setting. 
 
  We state some consequences of our Main Theorem,  starting with the  introduction of Campana's orbifolds.  Instead of considering the complement case in Brody's hyperbolicity, we can also examine an entire curve  $f:\mathbb C\to X$ that is ramified over $D$. According to Campana \cite{Camp}, 
	 an {\it orbifold divisor} is  defined as  $\Delta: =\sum_{Y\subset X} (1-m_{\Delta}^{-1}(Y))\cdot Y$, 
	 where $Y$ ranges over all irreducible divisors of $X$,   and $m_{\Delta}(Y ) \in [1, \infty)  \cap {\Bbb Q}$, with all but finitely many values equal to one.
	 An {\it orbifold entire curve} $f: {\Bbb C}\rightarrow  (X,\Delta)$, with $\Delta=\sum_i (1-m_i^{-1}) D_i$,  is a homomorphic  map such that $f({\Bbb C})\not\subset \mbox{Supp}(\Delta)$ and ${\rm mult}_{z_0}(f^*D_i) \ge  m_i$ for all $i$ and 
for every $z_0 \in {\Bbb C}$  with $f(z_0) \in  D_i$. 

 \noindent{{\bf Campana's orbifold conjecture}.  {\it If $(X,\Delta$) is an orbifold pair such that $K_X + \Delta$ is big, then there exists a proper closed subvariety $Z \subset X$ containing all the non-constant orbifold entire curves $f: {\Bbb C} \rightarrow  (X, \Delta)$. }

\noindent{\bf Remarks}.
\begin{enumerate}
\item The Campana's conjecture is a consequence of the above Vojta's  abc conjecutre.
\item  Given an orbifold divisor $\Delta$ on $X$,  let  $D:={\rm Supp}(\Delta)= D_1+\cdots+D_{q}$.
Recall that the pair $(X, D)$ is said to be   of {\it log-general type} if the divisor $K_X+D$ is big.
		It is clear that  $(X, D)$ is of log-general type if  $(X,\Delta)$ is   of  general type, i.e. $K_X + \Delta$ is big.
		On the other hand, the condition that $(X, D)$ is of log-general type implies that $(X,\Delta)$ is   of  general type if $ (X,\Delta) $ has a sufficiently large multiplicity along  each $D_i$, $1\leq i\leq q$.  $($See \cite[Corollary 2.2.24]{LazarsfeldI}.$)$
\end{enumerate}

 The Main Theorem leads to the following two theorems concerning Campana's orbifold conjecture; however, we will not include the proofs, as they can be adapted from those in \cite{GW22} for the case $n=2$.  We begin with the result concerning  $n+2$ components.
\begin{theorem} Let $\Delta_0$  be an orbifold divisor on ${\Bbb P}^n({\Bbb C})$ and let  $H_0 , H_1 , \dots,  H_n$ be the coordinate hyperplanes in ${\Bbb P}^n({\Bbb C})$, such that  $\Delta_0$ and $H_0, H_1, \dots, H_n$ are in general position. Let $m_i \in  (1, \infty)\cap {\Bbb Q}$,
$0\leq i\leq n$, and $\Delta=\Delta_0+(1-{1\over m_0} )H_0+(1-{1\over m_1} )H_1+\cdots + (1-{1\over m_n} )H_n$. Assume that $\deg\Delta>n+1$. 
Then there exist a proper Zariski closed subset $W$ of ${\Bbb P}^n({\Bbb C})$ and an effectively computable positive integer $\ell$ such that the image of any orbifold entire curve $f: {\Bbb C} \rightarrow ({\Bbb P}^n({\Bbb C}), \Delta)$ with $\min\{m_0, m_1,  \dots, m_n\} \ge \ell$  must be contained in $W$. 
\end{theorem}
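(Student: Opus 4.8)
The plan is to reduce the theorem to part (ii) of the Main Theorem applied to a suitable homogeneous polynomial. Write $\Delta_0=\sum_{j=1}^{s}(1-n_j^{-1})D_j$ with the $D_j$ irreducible and $n_j\in(1,\infty)\cap\mathbb Q$, let $P_j\in\mathbb C[x_0,\dots,x_n]$ be the irreducible homogeneous polynomial with $D_j=[P_j=0]$, and set $G:=P_1\cdots P_s$, so that $[G=0]={\rm Supp}(\Delta_0)$ and $d:=\deg G=\sum_{j}\deg D_j$. Since each $D_j$ is irreducible and, by the general position hypothesis, distinct from every coordinate hyperplane $H_i$, the polynomial $G$ has neither a monomial factor nor a repeated factor, and $[G=0]$ is in general position with $H_0,\dots,H_n$. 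First I would note that $\deg\Delta>n+1$ forces $\deg\Delta_0>0$ (in particular $\Delta_0\neq 0$, so $G$ is non-constant), because $\deg\Delta=\deg\Delta_0+\sum_{i=0}^{n}(1-m_i^{-1})<\deg\Delta_0+(n+1)$. Then I would apply the Main Theorem to $G$ with $\epsilon:=\tfrac12\deg\Delta_0>0$, obtaining an effectively computable integer $\ell$ and a proper Zariski closed subset $Z$; take $W$ to be any nonempty proper Zariski closed subset of $\mathbb P^n(\mathbb C)$ containing $Z$.

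Next I would run the Nevanlinna estimate. Let $f:\mathbb C\to(\mathbb P^n(\mathbb C),\Delta)$ be a non-constant orbifold entire curve with $\min_i m_i\ge\ell$, and suppose for contradiction that $f(\mathbb C)\not\subset Z$. The orbifold hypothesis gives ${\rm mult}_{z_0}(f^*H_i)\ge m_i\ge\ell$ at every $z_0$ with $f(z_0)\in H_i$, so $f$ meets the hypothesis of the Main Theorem, and part (ii) gives
$$(d-\epsilon)\,T_f(r)\le_{\exc}N^{(1)}_{G(\mathbf{f})}(0,r).$$
From $G(\mathbf{f})=\prod_j P_j(\mathbf{f})$ and $\min\{{\rm ord}_{z_0}\prod_j P_j(\mathbf{f}),\,1\}\le\sum_j\min\{{\rm ord}_{z_0}P_j(\mathbf{f}),\,1\}$ we obtain $N^{(1)}_{G(\mathbf{f})}(0,r)\le\sum_j N_f^{(1)}(D_j,r)$. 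The orbifold hypothesis along $D_j$ reads ${\rm ord}_{z_0}P_j(\mathbf{f})={\rm mult}_{z_0}(f^*D_j)\ge n_j$ whenever $f(z_0)\in D_j$, so $\min\{{\rm ord}_{z_0}P_j(\mathbf{f}),1\}\le n_j^{-1}{\rm ord}_{z_0}P_j(\mathbf{f})$, hence $N_f^{(1)}(D_j,r)\le n_j^{-1}N_f(D_j,r)$; together with the First Main Theorem bound $N_f(D_j,r)\le(\deg D_j)\,T_f(r)+O(1)$ this gives
$$(d-\epsilon)\,T_f(r)\le_{\exc}\sum_{j=1}^{s}\frac{\deg D_j}{n_j}\,T_f(r)+O(1).$$
Since $d-\sum_j\frac{\deg D_j}{n_j}=\sum_j(1-n_j^{-1})\deg D_j=\deg\Delta_0=2\epsilon$, this rearranges to $\epsilon\,T_f(r)\le_{\exc}O(1)$, contradicting $T_f(r)\to\infty$. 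Hence $f(\mathbb C)\subset Z\subset W$, and $\ell$ is effective since it is the one the Main Theorem produces for the explicit data $(G,\epsilon)$.

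The one step I expect to require care is the first: checking that ``general position of the orbifold divisor $\Delta_0$ with the coordinate hyperplanes'' really does imply ``$[G=0]$ in general position with $H_0,\dots,H_n$'' in the precise sense used in the Main Theorem, given that ${\rm Supp}(\Delta_0)$ may be reducible -- concretely that $[G=0]$ meets every coordinate linear subspace $L=\bigcap_{i\in I}H_i$ in the expected codimension, which holds because $[G=0]\cap L=\bigcup_{j}(D_j\cap L)$ and each $D_j$ meets $L$ properly by hypothesis. Apart from that, the argument is pure bookkeeping of Nevanlinna counting functions together with the elementary degree identity $d-\sum_j\deg D_j/n_j=\deg\Delta_0$, with the Main Theorem used only as a black box; in particular only part (ii) is needed for this statement, not the truncation refinement (i).
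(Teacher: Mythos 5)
Your proof is correct, and it is the deduction the paper has in mind: the paper itself omits the proof, pointing only to an adaptation of the $n=2$ argument from \cite{GW22}, and that adaptation is exactly what you carry out by applying part (ii) of the Main Theorem to $G=\prod_j P_j$ with $\epsilon=\tfrac12\deg\Delta_0$, then closing the estimate via $N^{(1)}_{G(\mathbf f)}(0,r)\le\sum_j N_f^{(1)}(D_j,r)\le\sum_j n_j^{-1}N_f(D_j,r)\le\sum_j n_j^{-1}(\deg D_j)T_f(r)+O(1)$ and the degree identity $d-\sum_j\deg D_j/n_j=\deg\Delta_0$. Your preliminary checks (that $\deg\Delta>n+1$ forces $\deg\Delta_0>0$, that $G$ has no monomial or repeated factors, and that general position of the $D_j$'s with the $H_i$'s gives general position of $[G=0]$ with $H_0,\dots,H_n$) are all sound, and it is a fair observation that only part (ii) of the Main Theorem, not the truncation estimate (i), is needed here.
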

The next  case involves $n+1$ components, not all    are  hyperplanes, in $\mathbb{P}^n({\Bbb C})$.\begin{theorem}\label{GG_conj}
		Let 
		$F_i$, $1\le i\le n+1$, be homogeneous irreducible polynomials of positive degrees  in $\mathbb{C}[x_0,\hdots,x_n]$.
		Assume that $D_i:=[F_i=0]\subset\mathbb P^n(\mathbb C)$,  $1\le i\le n+1$,   intersect transversally.  Let $\Delta=(1-{1\over m_1} )D_1+\cdots + (1-{1\over m_{n+1}} )D_{n+1}$ be an orbifold divisor of $\mathbb{P}^n({\Bbb C})$.  Assume that  $\deg \Delta>n+1$.
		Then, there exist  an effectively computable  positive integer  $\ell$ and a proper Zariski closed subset $W$ of $\mathbb P^n({\Bbb C})$ such that the image of any orbifold entire curve $f: {\Bbb C} \rightarrow ({\Bbb P}^n({\Bbb C}), \Delta)$ with $\min\{m_1,  \dots, m_{n+1}\} \ge \ell$  must be contained in $W$. 
	\end{theorem}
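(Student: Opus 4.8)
The plan is to deduce Theorem~\ref{GG_conj} from the Main Theorem by a finite base change turning the $D_i$ into multiples of the coordinate hyperplanes, following the scheme used for $n=2$ in \cite{GW22}. Write $d_i=\deg F_i$. First reduce to the case that $f$ is Zariski dense: otherwise the image of $f$ lies in a proper closed subvariety, and since the covering map $\phi$ constructed below is fixed, all the subvarieties occurring in the proof are independent of $f$ and may be collected into $W$.

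Since $D_1,\dots,D_{n+1}$ meet transversally they are in general position, so $D_1\cap\cdots\cap D_{n+1}=\varnothing$. Put $L=\operatorname{lcm}(d_1,\dots,d_{n+1})$ and $e_i=L/d_i$; then $F_1^{e_1},\dots,F_{n+1}^{e_{n+1}}$ have no common zero, so
\[
\phi=[\,F_1^{e_1}:\dots:F_{n+1}^{e_{n+1}}\,]\colon\mathbb P^n(\mathbb C)\longrightarrow\mathbb P^n(\mathbb C)
\]
is a finite surjective morphism with $\phi^{*}\mathcal O(1)=\mathcal O(L)$ and $\phi^{*}H_{i-1}=e_iD_i$, where $H_0,\dots,H_n$ are the coordinate hyperplanes of the target. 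Set $g:=\phi\circ f$. Then $g$ is Zariski dense, $T_g(r)=L\,T_f(r)+O(1)$, and whenever $f(z_0)\in D_i$ one has $\operatorname{mult}_{z_0}(g^{*}H_{i-1})=e_i\operatorname{mult}_{z_0}(f^{*}D_i)\ge e_im_i\ge m_i$, so $g$ is highly ramified over the coordinate hyperplanes (to order $\ge\ell$ as soon as $\min_i m_i\ge\ell$). By Riemann--Hurwitz the ramification divisor of $\phi$ equals $\sum_i(e_i-1)D_i+R'$, with $R'$ effective of degree $\sum_i d_i-(n+1)$; this degree is \emph{strictly positive} because $\deg\Delta>n+1$ forces $\sum_i d_i>n+1$. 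Let $B'=\phi_{*}R'$ be the corresponding branch divisor on the target, of positive degree.

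Now apply the Main Theorem to $g$ with $G$ a reduced form on the target vanishing on $B'$ and in general position with $H_0,\dots,H_n$ — such $G$ exists since $B'$, being the $\phi$-image of a divisor disjoint from the $D_i$, avoids every coordinate point of the target. Either $g(\mathbb C)\subset Z$, whence $f(\mathbb C)\subset\phi^{-1}(Z)$ and we are done, or inequalities (i) and (ii) hold for $g$. Because $\phi^{*}[G=0]$ contains $R'$ with multiplicity $\ge 2$, every point of $f^{-1}(R')$ contributes at least its full multiplicity to $N_{G(\mathbf g)}(0,r)-N^{(1)}_{G(\mathbf g)}(0,r)$; so (i), after pulling back and using $T_g=L\,T_f+O(1)$, yields $N_f(R',r)\le_{\exc}\epsilon\,T_f(r)$, i.e.\ $f$ essentially omits the positive-degree divisor $R'$, while (ii) bounds from below, with multiplicity one, how much $f$ must meet the rest of $\phi^{*}[G=0]$. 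Feeding $D_1,\dots,D_{n+1}$, enlarged by $R'$, into a second main theorem for the Zariski dense curve $f$ in $\mathbb P^n(\mathbb C)$ — using the truncation carried by (i) to pass to truncated counting functions, the ramification bound $\sum_i d_i^{-1}N_f^{(1)}(D_i,r)\le(n+1)\ell^{-1}T_f(r)$ to absorb the $D_i$ terms, and $N_f(R',r)\le_{\exc}\epsilon T_f(r)$ to absorb the $R'$ term — the surplus $\sum_i d_i-(n+1)>0$ then makes the resulting estimate impossible once $\ell$ is large. Hence $f$ is not Zariski dense; its image lies in $W:=\phi^{-1}(Z)$ together with the finitely many exceptional loci above, and $W$ as well as $\ell$ are effective because $\phi$ is explicit and the data in the Main Theorem is effective.

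The hard part is this last step: extracting a genuinely contradictory second main theorem from inequalities (i) and (ii), which amounts to matching the numerology ($\deg\Delta>n+1$ against the truncation level and the ramification bound) and verifying the required general-position statements for the pulled-back and auxiliary divisors in $\mathbb P^n(\mathbb C)$ — precisely the computations carried out for $n=2$ in \cite{GW22} that have to be adapted here.
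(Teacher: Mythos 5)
Your overall strategy is the right one and matches how the paper's argument must go: build the degree-$L$ morphism $\phi=[F_1^{e_1}:\cdots:F_{n+1}^{e_{n+1}}]$ (well defined since the $D_i$ are in general position, so $F_1^{e_1},\dots,F_{n+1}^{e_{n+1}}$ have no common zero), transport $f$ to $g=\phi\circ f$, observe $g$ is ramified to order $\ge\ell$ over the coordinate hyperplanes, invoke the Main Theorem for $g$, and exploit the positivity of the ``extra'' ramification $R'$ with $\deg R'=\sum d_i-(n+1)>0$ forced by $\deg\Delta>n+1$. This is exactly the mechanism the paper runs in the abstract in Theorem~\ref{finitemorphismtoric}, with $Y=X=\mathbb P^n$, $\pi=\phi$, $H=\phi^*D_0=\sum e_iD_i$ and $R=R'$.

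Two points, however, are off. First, the claim that $R'$ is \emph{disjoint} from the $D_i$ is not true in general: $R'$ is only required not to contain any $D_i$ as a component, and it can certainly meet them. What is actually needed, and what the transversality hypothesis supplies, is the weaker statement that $R'$ avoids the finitely many $0$-dimensional strata $\cap_{i\neq j}D_i$: near a point of $\cap_{i\neq j}D_i$ one can take the $F_i$ ($i\neq j$) as local coordinates, in which $\phi$ reads $(y_1,\ldots,y_n)\mapsto(y_1^{e_{i_1}}u_1,\ldots,y_n^{e_{i_n}}u_n)$ with the $u_k$ units, and the Jacobian is $\prod_k y_k^{e_{i_k}-1}$ times a unit, so the ramification there is exactly $\sum_{i\neq j}(e_i-1)D_i$ and $R'$ misses the point. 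This is precisely what gives that $B'=\phi(\operatorname{Supp}R')$ avoids the coordinate points and hence is in general position with the $H_i$, which is the hypothesis you need before applying the Main Theorem; your stated justification does not deliver it.

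Second, the endgame as you describe it --- feeding $D_1,\dots,D_{n+1},R'$ into ``a second main theorem for $f$'' together with the ramification bound --- does not obviously close: the untruncated SMT for hypersurfaces in $\mathbb P^n$ bounds $\sum d_i^{-1}m_f(D_i,r)$ by $(n+1+\epsilon)T_f(r)$, which is compatible with everything you have and yields no contradiction, while a truncated version strong enough to do the job is essentially the theorem you are trying to prove. The argument that actually works is simpler and uses only the First Main Theorem, as in the paper's proof of Theorem~\ref{finitemorphismtoric}: with $G$ a reduced equation for $B'$, inequality (i) applied to $g$ gives $N_f(\operatorname{Supp}R',r)\le N_{G(\mathbf g)}(0,r)-N^{(1)}_{G(\mathbf g)}(0,r)\le_{\rm exc}\epsilon L\,T_f(r)$ because $\phi^*[G=0]$ carries every component of $R'$ with multiplicity $\ge 2$; inequality (ii) plus the FMT for $g$ gives $m_g([G=0],r)\le_{\rm exc}\epsilon L\,T_f(r)$, and by functoriality $m_f(\operatorname{Supp}R',r)\le m_f(\phi^*[G=0],r)+O(1)=m_g([G=0],r)+O(1)$; adding, $T_{\operatorname{Supp}R',f}(r)\le_{\rm exc}2\epsilon L\,T_f(r)+O(1)$. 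But $\operatorname{Supp}R'$ is a nonzero effective divisor (since $\deg R'>0$), so $T_{\operatorname{Supp}R',f}(r)=(\deg\operatorname{Supp}R')T_f(r)+O(1)\ge T_f(r)+O(1)$, and choosing $\epsilon<1/(2L)$ gives the contradiction, placing $f(\mathbb C)$ inside $W:=\phi^{-1}(Z)\cup\operatorname{Supp}R'\cup(\cup_iD_i)$. No separate reduction to Zariski-dense $f$ is needed: the Main Theorem only requires $g(\mathbb C)\not\subset Z$.
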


Note  that $\mathbb P^n({\Bbb C})$ is a  toric variety  when we identify  ${\mathbb G}_m^n$ with ${\Bbb P}^n({\Bbb C})\backslash \cup_{i=0}^n H_i$, where $H_0, \dots, H_n$ are the $n+1$ coordinate hyperplanes.  A  toric variety $X$ is defined as  a variety over ${\Bbb C}$ that contains ${\mathbb G}_m^n$
as a dense open subvariety, with  the action of ${\mathbb G}_m^n$ on itself extends to an algebraic
action of ${\mathbb G}_m^n$ on $X$. We refer to \cite{cox} for more details on toric varieties.  Using the arguments in \cite{levin_gcd} and \cite{GGW}, we can extend our Main Theorem to projective toric varieties.
Recall from \cite{levin_gcd} that an  ``admissible pair"  is a couple $(X,V)$ where  $V$ is a nonsingular variety embedded in a nonsingular projective variety $X$ in such a way that $D_0 =X\setminus V$ is a normal crossings divisor. A Campana orbifold $(X,\Delta)$ is said to be {\it associated to the admissible pair $(X,V)$} if  $D_0 $ is the support of $\Delta$.    

 \begin{theorem}\label{toric}
Let $X$ be a nonsingular projective toric  variety and 
 let $D$ be an effective reduced divisor on the admissible pair $(X,\mathbb G_m^n)$, such that $D$ is in general position with the boundary $D_0 =X\setminus \mathbb G_m^n$. Let $A$ be a big divisor on $X$.
 Then, for every  $\epsilon >0$, there is a positive  integer  $\ell $ and a proper Zariski-closed  subset $Z$ of $X$ such that, for any Campana orbifold $(X,\Delta)$ associated to the admissible pair  $(X,\mathbb G_m^n)$ with multiplicities at least $\ell$ along the boundary $D_0$,  the following two inequalities hold:

\begin{enumerate}
\item
$N_f(D,r)-N_f^{(1)}(D,r)\le_{\rm exc} \epsilon T_{A,f}(r)$, and 
\item $T_{D,f}(r) \le_{\rm exc} N_f^{(1)}(D,r) +\epsilon T_{A,f}(r) $ 
\end{enumerate}
for any  non-constant entire orbifold curve $f$ on $(X,\Delta)$ such that the image of $f$ is not contained in $Z$.
\end{theorem}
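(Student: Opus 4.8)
The plan is to reduce Theorem~\ref{toric} to the Main Theorem by transporting the problem to the open torus $\mathbb G_m^n=X\setminus D_0$, on which it no longer depends on the chosen compactification, and then re-packaging the resulting estimates in terms of the pair $(X,A)$ following \cite{levin_gcd} and \cite{GGW}. The starting point is the observation, already underlying the Main Theorem, that an orbifold entire curve $f$ on $(X,\Delta)$ with multiplicities at least $\ell$ along $D_0$ essentially omits $D_0$: on the parabolic Riemann surface $\mathcal Y_f=\mathbb C\setminus f^{-1}(D_0)$ the map $f$ takes values in $\mathbb G_m^n$, and every level-one truncated counting contribution supported on $f^{-1}(D_0)$ is negligible, since $N_f^{(1)}(D_0,r)\le\frac{1}{\ell}N_f(D_0,r)\le\frac{1}{\ell}T_{D_0,f}(r)+O(1)\le_{\rm exc}\frac{C}{\ell}T_{A,f}(r)+O(1)$ for a constant $C=C(X,D_0,A)$ (using that $A$ is big, after enlarging $Z$ to contain the augmented base locus of $A$). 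Thus, once $\ell$ is chosen with $C/\ell$ small compared with $\epsilon$, such contributions fall inside the error term; the same estimate shows that replacing $D$ by its reduced divisor alters both sides of (1) and (2) by only $O(1)$, so we may take $D$ reduced.

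First I would fix torus coordinates and write $D\cap\mathbb G_m^n=\{g=0\}$ for a squarefree Laurent polynomial $g$, the hypothesis that $D$ is in general position with $D_0$ becoming a Newton-polytope/fan compatibility condition on $g$. Following the proof of the Main Theorem, I would then apply the parabolic second-main-theorem estimates for holomorphic maps of parabolic Riemann surfaces into $\mathbb G_m^n$ meeting $\{g=0\}$ --- the extensions of \cite{GW22} and \cite{GNSW} to the parabolic setting developed for the Main Theorem --- to the restriction $f|_{\mathcal Y_f}\colon\mathcal Y_f\to\mathbb G_m^n$. This yields, with respect to $\mathcal Y_f$, inequalities of exactly the form of (1) and (2) for the single divisor $\{g=0\}$ and the $\mathcal Y_f$-characteristic function of $f$.

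It then remains to translate these $\mathcal Y_f$-theoretic inequalities back to $\mathbb C$ and to the divisors $D$ and $A$ on $X$. For this one uses three comparisons, each standard or carried out in \cite{levin_gcd} and \cite{GGW}: (a)~the $\mathcal Y_f$-counting functions (truncated and untruncated) of $f^{-1}(\{g=0\})$ differ from $N_f^{(1)}(D,r)$ and $N_f(D,r)$ only by contributions supported on $f^{-1}(D\cap D_0)$, the truncated discrepancy being bounded by $N_f^{(1)}(D_0,r)$ and hence negligible, while the untruncated one is absorbed by organising the argument around the log pair $(X,D+D_0)$, for which $K_X+D+D_0\sim D$ since $K_X+D_0\sim 0$; (b)~the $\mathcal Y_f$-characteristic function of $f$ agrees with $T_{A,f}(r)$ up to a boundary term of size $O(\frac{1}{\ell}T_{A,f}(r))$, again by the high ramification; and (c)~the $\mathcal Y_f$-height attached to $\{g=0\}$ equals $T_{D,f}(r)$ up to negligible boundary corrections, through the Newton-polytope data of $g$ and the comparison of toric height functions. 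Combining (a)--(c) with the $\mathcal Y_f$-inequalities gives (1) and (2). The exceptional set $Z$ and the integer $\ell$ are read off from those furnished by the Main Theorem together with $C$ and the fan/Newton-polytope combinatorics; in particular $Z$ does not depend on $\Delta$, whose sole role is to impose the multiplicity lower bound, so a single pair $(Z,\ell)$ works for all admissible orbifold pairs $(X,\Delta)$.

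I expect the main obstacle to be precisely this last translation: making the comparison of the $\mathcal Y_f$-theoretic and $\mathbb C$-theoretic height and counting functions, and their relation to $T_{D,f}$ and $T_{A,f}$ for an arbitrary big divisor $A$ on the toric variety $X$, fully quantitative and uniform in $\Delta$. It is worth stressing why the seemingly more direct route fails: pushing $f$ forward along a birational toric morphism $X'\to\mathbb P^n$ and invoking the Main Theorem on $\mathbb P^n$ does not preserve the high-ramification hypothesis, because such a morphism can lower the order of contact of the curve with a coordinate hyperplane to a small number (the new order being a difference of old ones). This is exactly why the argument must proceed through the torus and the combinatorial comparison of heights rather than through a change of compactification.
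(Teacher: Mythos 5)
Your plan diverges from the paper's proof, which does not redo any parabolic analysis for the torus: it covers $X$ by the affine charts $X_\eta\cong\mathbb A^n$ attached to the maximal cones of the fan, composes with $\mathbb A^n\subset\mathbb P^n(\mathbb C)$, applies the already-established Main Theorem to each map $\widetilde{i_\eta}(f)=[1:u_{\eta,1}:\cdots:u_{\eta,n}]$ (the chart polynomial $F_\eta$ cutting out $D$ has no monomial or repeated factors and satisfies $\deg_{x_i}F_\eta=\deg F_\eta$ for all $i$ precisely because $D$ is in general position with $D_0$), compares all the resulting heights with $T_{A,f}(r)$ via \cite[Proposition 12.11]{Vojta} for the big divisor $A$, and then sums the chart-wise inequalities, estimating $m_f(D,r)$ by $\sum_\eta m_{F_\eta(\widetilde{i_\eta}(f))}(0,r)$ and invoking the First Main Theorem. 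In particular, the route you dismiss in your closing paragraph is essentially the paper's route: the ramification hypothesis is transferred chart by chart, the chart coordinates being monomials whose zero and pole orders along $f$ are expressed through the orbifold multiplicities imposed on all components of $D_0$.

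The genuine gap in your proposal is the translation step, above all for inequality (2). The only lower bound for truncated counting functions that the machinery you invoke provides is part (b) of Theorem \ref{mainthm} (and the Borel-type estimate behind it), and it requires $G(0,\dots,0)\neq 0$ and $\deg_{x_i}G=\deg G$ for every $i$. For a single Laurent polynomial $g$ in one fixed system of torus coordinates this full-Newton-polytope condition generally fails, and then that conclusion is false, not merely unproved: for $X=\mathbb P^1\times\mathbb P^1$ and $D$ of type $(1,1)$, the dehomogenized polynomial has $\deg_{x_1}g=1<2=\deg g$, and the bound $N^{(1)}_{g(\mathbf u),\mathcal Y_f}(0,r)\ge(\deg g-\epsilon)\max_i T_{u_i,\mathcal Y_f}(r)$ fails whenever $T_{u_2,\mathcal Y_f}\ll T_{u_1,\mathcal Y_f}$; what inequality (2) needs is a bound against $T_{D,f}(r)$, which is exactly what your step (c) asserts but never supplies. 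Moreover, ``up to negligible boundary corrections'' in (c) is not correct: the discrepancy between $T_{D,f}(r)$ and the height read off from $g$ in one chart consists of proximity terms along $D_0$, and $m_f(D_0,r)$ is not small under the ramification hypothesis (only $N^{(1)}_f(D_0,r)$ is); likewise the remark $K_X+D+D_0\sim D$ in (a) does not by itself bound the untruncated contributions at $f^{-1}(D\cap D_0)$. To carry out your single-chart strategy one would have to prove a polytope-weighted analogue of part (b) of Theorem \ref{mainthm}, which is genuinely new work; the paper's chart-sum, in which each $F_\eta$ does satisfy the full-degree condition thanks to the general position hypothesis, is designed precisely to avoid this.
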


Theorem \ref{toric} implies Campana's conjecture for an orbifold pair  that is a finite covering of a  toric variety, as follows.
\begin{theorem}\label{finitemorphismtoric}
Let $X$ be a nonsingular projective toric  variety, and let  $D_0 =X\setminus \mathbb G_m^n$.
Let  $Y$ be a nonsingular  complex   projective  variety with a finite  morphism $ \pi:  Y\to X$.  
Let $H:=\pi^*(D_0)$ and $R\subset Y$  the ramification divisor of $\pi$ omitting components from the support of $H$.
Assume that  $\pi(R)$ and $D_0$ are in general position on $X$. 		
Then there exists a positive integer $\ell$ such that, if the orbifold pair $(Y,\Delta)$ is   of  general type with ${\rm Supp}(\Delta)=  {\rm Supp}(H)$ and  multiplicity at least $\ell$ for each component, then there exists a proper Zariski closed subset $W$ of $Y$ such that  the image of any non-constant orbifold entire curve $f: {\Bbb C} \rightarrow (Y,\Delta)$  must be contained in $W$. 
	\end{theorem}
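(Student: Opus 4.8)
The plan is to derive Theorem~\ref{finitemorphismtoric} from Theorem~\ref{toric} by composing the orbifold curve $f$ on $Y$ with $\pi$ and transporting the conclusion of Theorem~\ref{toric} back to $Y$, in the spirit of the reduction schemes of \cite{GW22} and \cite{levin_gcd}. Writing $D_0=\sum_j D_{0,j}$ for the torus-invariant boundary divisors, one has $K_X=-D_0$ since $X$ is a smooth toric variety, while Riemann--Hurwitz gives $K_Y=\pi^*K_X+R_\pi$ with $R_\pi$ the full ramification divisor. Decomposing $R_\pi=R+R_0$ with $R_0$ supported on ${\rm Supp}(H)$ and using $\pi^*D_0=H$ together with $H-R_0={\rm Supp}(H)$, one gets $K_Y+{\rm Supp}(H)=R$. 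Hence for any admissible $\Delta=\sum_i(1-m_i^{-1})H_i$ one has $K_Y+\Delta=R-\sum_i m_i^{-1}H_i$, so $R=(K_Y+\Delta)+\sum_i m_i^{-1}H_i$ is big whenever $(Y,\Delta)$ is of general type; if $R$ is not big the hypothesis of the theorem can never be satisfied and there is nothing to prove, so from now on I assume $R$ is big, fix an ample divisor $A'$ on $Y$, and write $N_0R=A'+E$ with $N_0$ a positive integer and $E\ge 0$ effective.

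The divisor-theoretic crux, which I would establish next, is the identity $\pi^*D=R+\widetilde D$, where $D:=\pi(R)$ is taken reduced --- so, by hypothesis, in general position with $D_0$, hence an admissible input for Theorem~\ref{toric} --- and $\widetilde D:=\pi^{-1}(D)_{\rm red}$. The point is that each prime divisor $W\subseteq\pi^{-1}({\rm Supp}\,D)$ has a ramification index $e_W$, and since ${\rm Supp}\,D$ and ${\rm Supp}\,D_0$ meet in codimension $\ge 2$ the image $\pi(W)$ is not a component of $D_0$, so $W$ is not a component of $H$; thus $W$ contributes $(e_W-1)W$ to $R$ precisely when $e_W\ge 2$, while every component of $R$ lies over $D$ because $\pi(R)=D$, and summing gives $\pi^*D=\sum_W e_WW=\widetilde D+R$.

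Then I would fix an ample divisor $A$ on $X$ and, using that $A'$ is ample, a constant $C\ge 1$ with $T_{\pi^*A,f}(r)\le C\,T_{A',f}(r)+O(1)$ for all non-constant holomorphic $f\colon{\Bbb C}\to Y$; next choose $\epsilon>0$ with $N_0C\epsilon<\frac{1}{2}$ and apply Theorem~\ref{toric} to $X$, the divisor $D$, the big divisor $A$, and this $\epsilon$, obtaining a threshold $\ell_0$ and a proper Zariski-closed $Z\subset X$. Set $\ell:=\ell_0$ and $W:=\pi^{-1}(Z)\cup{\rm Supp}(E)$, a proper Zariski-closed subset of $Y$. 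Now let $(Y,\Delta)$ be of general type with ${\rm Supp}(\Delta)={\rm Supp}(H)$ and all multiplicities $\ge\ell$, let $f\colon{\Bbb C}\to(Y,\Delta)$ be a non-constant orbifold entire curve, and suppose $f({\Bbb C})\not\subseteq W$. Then $g:=\pi\circ f$ is a non-constant entire curve on $X$ with $g({\Bbb C})\not\subseteq Z\cup{\rm Supp}(D_0)$, and, writing $\pi^*D_{0,j}=\sum_k e_{jk}H_{jk}$ and using the orbifold inequalities ${\rm mult}_{z_0}(f^*H_{jk})\ge\ell$, one finds ${\rm mult}_{z_0}(g^*D_{0,j})=\sum_k e_{jk}\,{\rm mult}_{z_0}(f^*H_{jk})\ge\ell$ whenever $g(z_0)\in D_{0,j}$; hence $g$ is an orbifold entire curve on $(X,\sum_j(1-\ell^{-1})D_{0,j})$ and Theorem~\ref{toric} applies to it.

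Finally I would combine everything. Part (2) of Theorem~\ref{toric} reads $T_{D,g}(r)\le_{\rm exc}N_g^{(1)}(D,r)+\epsilon T_{A,g}(r)$; by functoriality of the height and the identity $\pi^*D=R+\widetilde D$ one has $T_{D,g}(r)=T_{R,f}(r)+T_{\widetilde D,f}(r)+O(1)$, while $N_g^{(1)}(D,r)=N_f^{(1)}(\widetilde D,r)\le N_f(\widetilde D,r)\le T_{\widetilde D,f}(r)+O(1)$ because truncation-one counting only detects which points map into the divisor and is insensitive to ramification, and $T_{A,g}(r)=T_{\pi^*A,f}(r)+O(1)\le C\,T_{A',f}(r)+O(1)$. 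Substituting and cancelling the common term $T_{\widetilde D,f}(r)$ yields $T_{R,f}(r)\le_{\rm exc}C\epsilon\,T_{A',f}(r)+O(1)$. On the other hand, since $f({\Bbb C})\not\subseteq{\rm Supp}(E)$ and $N_0R=A'+E$ we get $N_0T_{R,f}(r)=T_{A',f}(r)+T_{E,f}(r)+O(1)\ge T_{A',f}(r)-O(1)$, whence $T_{A',f}(r)\le_{\rm exc}N_0C\epsilon\,T_{A',f}(r)+O(1)<\frac{1}{2}T_{A',f}(r)+O(1)$, forcing $T_{A',f}(r)=O(1)$, which is impossible for a non-constant $f$; therefore $f({\Bbb C})\subseteq W$. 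I expect the main obstacle to be precisely this transfer between $X$ and $Y$: recognizing that the quantity $T_{D,g}(r)-N_g^{(1)}(D,r)$ controlled by Theorem~\ref{toric} equals, up to $O(1)$ and the small error term, the height $T_{R,f}(r)$ of the ramification divisor --- which is exactly what the identity $\pi^*D=R+\widetilde D$ encodes --- together with the bookkeeping needed to ensure that the bigness constant $N_0$, hence $\ell$, depends only on the fixed covering $\pi\colon Y\to X$ and not on the particular $\Delta$.
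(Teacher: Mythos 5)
Your proposal is correct, and its overall skeleton is the paper's: push $f$ down to $g=\pi\circ f$, check that $g$ is an orbifold curve for $(X,\sum_j(1-\ell^{-1})D_{0,j})$, apply Theorem \ref{toric}, conclude $T_{R,f}(r)\le_{\rm exc}\epsilon'\,T(r)$ for a height $T$ comparable to $T_{R,f}$ by bigness of $R$, and derive a contradiction. The middle step, however, is organized differently. The paper bounds the two pieces of $T_{R,f}$ separately: it uses part (1) of Theorem \ref{toric} together with the inequality $N_f(R,r)\le N_g(D,r)-N_g^{(1)}(D,r)$ (coming from $\pi^*D$ having multiplicity $\ge 2$ along $R$), and part (2) together with $R\le\pi^*D$ to bound $m_f(R,r)\le m_g(D,r)+O(1)$, and it does this component by component of $R$. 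You instead prove the exact divisor identity $\pi^*D=R+\widetilde D$ (your general-position argument ruling out components of $H$ over $D$ is the right one) and then need only part (2): $T_{D,g}=T_{R,f}+T_{\widetilde D,f}+O(1)$ and $N_g^{(1)}(D,r)=N_f^{(1)}(\widetilde D,r)\le T_{\widetilde D,f}(r)+O(1)$ let you cancel $T_{\widetilde D,f}$ and isolate $T_{R,f}$ in one stroke; this is a genuine streamlining, at the cost of having to justify the identity rather than the two softer inequalities. Two further substitutions: you derive the bigness of $R$ directly from log Riemann--Hurwitz and $K_X+D_0\sim 0$ (the paper cites \cite{CZ2013} and \cite{GGW} for this), which makes the argument more self-contained, and you close with Kodaira's lemma $N_0R\sim A'+E$ where the paper invokes \cite[Proposition 12.11]{Vojta}; these are equivalent, and both correctly make $\ell$ and $W$ independent of $\Delta$. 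The only bookkeeping you should add is to enlarge $W$ by $\pi^{-1}({\rm Supp}\,D)$ (or observe that the exceptional set of Theorem \ref{toric} may be taken to contain ${\rm Supp}\,D$), so that $g(\mathbb C)\not\subset D$ and the First Main Theorem for $\widetilde D$ is applicable; the paper's proof glosses the same point.
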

	
The paper is organized as follows. In Section \ref{motivation}, we  prove a version of the Main Theorem for rational maps and explain the motivation for using  parabolic Riemann surfaces.  	
Section \ref{parabolic} reviews basic results from   Nevanlinna theory relevant to  parabolic settings,  as well as  the key theorems  from \cite{CHSX}. Furthermore, we  extend the GCD theorem developed in \cite{levin2019greatest} and  in \cite{GW22} to parabolic setting. In Section \ref{abcparabolic}, we establish the abc theorem for parabolic Riemann surfaces, building on the  methods developed in \cite{GNSW}. Section \ref{proofmain} presents the proof of the Main Theorem, while Section  \ref{toricSection} extends results to toric varieties.

\section{Motivation through the rational case}\label{motivation} 
In \cite{GNSW}, a version of Vojta's  abc conjecture for algebraic tori over algebraic function fields is established. As we will see, the algebraic reduction method presented in   \cite{GNSW}   is a  key technical component in proving
 our Main Theorem. The core idea behind the proof is also inspired by the function field approach. In this section, we will prove a version of the Main Theorem for rational functions and explain the role of parabolic Riemann surfaces in this context.  

\subsection{The Rational Case}\label{rationalcase}
 The statements and arguments apply verbatim to algebraic function fields (see also \cite[Theorem 1.13]{GGW}).
Here, let $K=\mathbb C(t)$ be the function field of $\mathbb P^1({\Bbb C})$.  Let $S$ be a finite subset of  $\mathbb P^1(\mathbb C)$.  We denote by  ${\mathcal O}_{S}^*$  the set of $S$-units, which consists of rational functions with no zeros and poles outside of $S$.  For a rational function $f=\frac{P}{Q}$, where $P$ and $Q$ are coprime polynomials, we define $\deg  f:=\max\{\deg P,\deg Q\}$ and denote by ${\rm ord}_{\p}^+(f)$ the order of the zero of $f$ at $\p\in\mathbb P^1(\mathbb C)$.
We use the notation  $N_{S}(f)$ (respectively $N_{S}^{(1)}(f)$)  to count the number of zeros of $f$ with multiplicity (respectively, without multiplicity) outside of $S$.  When $S$ is empty, we simply write  $N(f) $,  which equals $\deg f$, to count the number of zeros with multiplicity  on $\mathbb P^1(\mathbb C)$; $N^{(1)}(f)$ denotes the same count without multiplicity.

  Recall a simplified version of the main technical theorem in  \cite{GNSW}. 
 \begin{theorem}[{\cite[Theorem 4]{GNSW}}]\label{GNSW}
Let  $G\in \mathbb C[ x_1,\hdots, x_n]$ be a non-constant    homogeneous polynomial  with neither monomial  factors nor repeated factors. 
Let $S$ be a finite subset of  $\mathbb P^1(\mathbb C)$.  Then, for any $\epsilon>0$,   there exist a positive real number $c_0$  and  
a proper Zariski closed subset $Z$ of $\mathbb{A}^n$ such that, for all  $\mathbf u:=(u_1,\hdots,u_n)\in ({\cal O}_{S}^*)^{n}\setminus Z$, we have either
\begin{enumerate}
\item the inequality 
\begin{align*}
 \max_{1\le i\le n}\{ \deg (u_i)\}\le  c_0   \max\{1,|S|-2\}  
\end{align*}
holds, 
  \item or the following two statements hold

 \begin{enumerate}
 \item[{\rm(a)}]  $N_{S}( G(\mathbf u) )-N_{S}^{(1)}( G(\mathbf u))\le \epsilon \max_{1\le i\le n}\{\deg  u_i\}$ if $G(u_1,\hdots,u_n)\ne 0$.
 \item[{\rm(b)}]  If $G(0,\hdots,0)\ne 0$ and $\deg_{X_i}G=\deg G=d$ for $1\le i\le n$, then 
 \begin{align*} 
 N_{S}^{(1)}( G(\mathbf u))\ge  (\deg G -\epsilon) \max_{1\le i\le n}\{\deg  u_i\}.
 \end{align*}
  \end{enumerate}
  \end{enumerate}
 Here $c_0$ can be  effectively bounded from above in terms of $\epsilon$, $n$, and the degree of $G$.    Moreover, the exceptional set $Z$ can be expressed as the zero locus of a finite set
$\Sigma\subset \mathbb C[x_1,\ldots,x_n]$ with the following properties: {\rm(Z1)} $\Sigma$ depends on $\epsilon$ and $G$  but independent of $S$ and can be determined explicitly and  {\rm(Z2)} $\vert \Sigma\vert$ and the degree of each polynomial in $\Sigma$ can be effectively bounded from above in terms of $\epsilon$, $n$, and the degree of $G$.
 \end{theorem}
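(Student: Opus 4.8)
\medskip
The plan is to recast the statement as a collection of Vojta-type inequalities for a rational map to projective space and then apply Diophantine approximation for the torus, which over the function field $\mathbb C(t)$ is \emph{effective}. Since $G$ is homogeneous of degree $d:=\deg G$, for a tuple $\mathbf u=(u_1,\dots,u_n)$ in reduced form the counting functions $N_S(G(\mathbf u))$ and $N_S^{(1)}(G(\mathbf u))$ are precisely the with- and without-multiplicity counts, away from $S$, of the pullback of the hypersurface $Y=[G=0]\subset\mathbb P^{n-1}$ under $\phi=[u_1:\cdots:u_n]\colon\mathbb P^1\to\mathbb P^{n-1}$, and $h:=\max_i\deg u_i$ equals, up to an additive term $O(|S|)$, the degree (height) of $\phi$. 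The crucial structural consequence of the $S$-unit hypothesis is that $\phi$ maps $\mathbb P^1\setminus S$ into the open torus $T=\mathbb G_m^{n-1}=\mathbb P^{n-1}\setminus\bigcup_iH_i$, so $\phi$ meets the boundary only over $S$; note that $K_{\mathbb P^{n-1}}+\sum_iH_i+Y=\mathcal O(d)$ is big, so $(\mathbb P^{n-1},\sum_iH_i+Y)$ is of log general type and the statement is an effective, function-field instance of Vojta's abc conjecture for this pair.

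\medskip
First I would separate off the bounded-height alternative (i). If $\phi$ is highly degenerate --- for instance if its image lies in a translate of a proper subtorus of $T$, equivalently if a nontrivial system of multiplicative ($S$-unit) relations holds among the $u_i$ --- then the many-term Mason--Stothers / $S$-unit equation inequality over $\mathbb C(t)$ forces $\max_i\deg u_i\le c_0\max\{1,|S|-2\}$, the quantity $|S|-2=\deg(K_{\mathbb P^1}+S)$ being the relevant Euler characteristic and $c_0$ effective in $n$ and $d$. For $\mathbf u$ lying outside this regime I would feed $\phi$ into the effective subspace theorem over $\mathbb C(t)$ (equivalently the function-field, effective form of the Ru--Vojta theorem), applied against a finite family of hyperplanes in general position that is manufactured from $Y$ and the coordinate hyperplanes by the Corvaja--Zannier--Levin construction below. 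The conclusion is the stated dichotomy: either $h\le c_0\max\{1,|S|-2\}$, or $\phi$ is forced into one of finitely many proper closed subsets whose number and degrees are controlled by the parameters of the subspace theorem. Collecting the preimages of those subsets in $\mathbb A^n$ into a single $Z$ and tracking degrees through the construction yields (Z1) and (Z2); the free parameter in the subspace theorem is exactly what produces the ``$\epsilon$'' in (a) and (b), at the cost of enlarging the finite family of exceptional subsets.

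\medskip
It then remains, for $\mathbf u\notin Z$ with $h$ large, to establish (a) and (b). For the lower bound (b), namely $N_S^{(1)}(G(\mathbf u))\ge(d-\epsilon)h$, I would use the greatest-common-divisor method: for a large auxiliary integer $N$, Riemann--Roch on $\mathbb P^1$ produces many $\mathbb C(t)$-linearly independent sections of $\phi^*\mathcal O(Nd)$ vanishing to high order along $\phi^*Y$; applying the effective subspace theorem to these sections against the coordinate hyperplanes (which $\phi$ avoids off $S$) forces $\phi^*Y$, counted away from $S$, to have large degree, and a Wronskian refinement then upgrades this to the without-multiplicity count $N_S^{(1)}$. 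The hypothesis $\deg_{x_i}G=d$ for all $i$ is what keeps the relevant graded pieces nonzero, i.e.\ forces the ``Vojta--Seshadri constant'' of $Y$ along the torus boundary to equal $d$. For the truncation estimate (a), namely $N_S(G(\mathbf u))-N_S^{(1)}(G(\mathbf u))\le\epsilon h$, I would bound the total excess $\sum_{\mathfrak p\notin S}\bigl(\operatorname{ord}_{\mathfrak p}^+G(\mathbf u)-1\bigr)^{+}$ --- the contribution of the points where $\phi$ is tangent to $Y$ or meets a codimension-two stratum of $Y$ --- again by a Wronskian argument in the style of Vojta's proof of the Second Main Theorem truncated to level one: the Wronskian of a basis of the auxiliary sections vanishes precisely at such points, its degree is $O(|S|)$ plus a term that the subspace-theorem step absorbs into $\epsilon h$, and nondegeneracy of $\phi$ (image not in $Z$) is what keeps this Wronskian nonzero.

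\medskip
I expect the main obstacle to be the truncation: producing the level-one bound in (a) and the without-multiplicity count in (b), rather than the crude with-multiplicity bounds supplied by bare intersection theory, and doing so uniformly in the finite set $S$ and effectively. This forces one to marry the Corvaja--Zannier--Levin symmetric-power construction to a Wronskian/derivative analysis and to feed the output into an effective subspace theorem while keeping every constant explicit; the appearance of $\max\{1,|S|-2\}$ in alternative (i) and the effectivity assertions (Z1), (Z2) are precisely where this bookkeeping has to be carried out with care.
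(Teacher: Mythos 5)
This statement is not proved in the paper at all: it is quoted verbatim from \cite[Theorem 4]{GNSW}, and the paper's own contribution is to transplant that proof to parabolic Riemann surfaces (Theorems \ref{Mfundamental}, \ref{movinggcd}, \ref{prethm}, \ref{mainthmK}), whose outlines reveal the actual mechanism. Measured against that mechanism, your proposal has the right cast of characters (function-field subspace theorem, unit equations, a GCD method, Wronskians, a degenerate/bounded-height dichotomy with $|S|-2$ as the Euler-characteristic term), but it is missing the one idea that actually produces the level-one truncation, and you yourself flag truncation as the main obstacle without resolving it. The real proof does not attempt any truncated second main theorem for the hypersurface $[G=0]$ -- no such level-one statement is known, and a ``Wronskian of auxiliary sections vanishes at tangency points'' argument only bounds the excess by a full ramification-type term which is exactly what must be shown to be $\le \epsilon\,h$. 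Instead, the key step is algebraic: one observes that $N_S(G(\mathbf u))-N_S^{(1)}(G(\mathbf u))$ is bounded by the gcd-counting of $G(\mathbf u)$ and its derivative, writes that derivative as $D_{\mathbf u}(G)(\mathbf u)$, where $D_{\mathbf u}(G)$ is the polynomial obtained by twisting each coefficient by $\sum_j i_j u_j'/u_j$ (so its coefficients are ``small'' moving functions, of height $O(\max\{1,|S|-2\})$ by the logarithmic-derivative bound), shows $D_{\mathbf u}(G)$ and $G$ are coprime unless some monomial $u_1^{m_1}\cdots u_n^{m_n}$ has small height, and then applies a moving-coefficient GCD theorem of Levin--Wang type (itself proved from a moving-target Cartan/subspace theorem applied to a basis of the degree-$m$ graded piece of the ideal $(F,G)$). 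Your proposal uses the Corvaja--Zannier--Levin section construction only for the lower bound (b), which in the actual argument is the easy part: it follows from a Borel/unit-equation estimate applied to the subsums of $G(\mathbf u)=1+\sum_i\alpha_i u_i^d+\cdots$ (this is where $G(0,\dots,0)\ne0$ and $\deg_{x_i}G=\deg G$ enter, and where the subsum hypersurfaces enter $Z$), combined with the truncation (a).

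A second concrete gap concerns the exceptional set and effectivity. In your sketch $Z$ is assembled from ``finitely many proper closed subsets'' output by the subspace theorem, but those subsets a priori depend on $S$ and on $\mathbf u$, whereas the theorem requires $Z$ to be the zero locus of an explicit finite set $\Sigma\subset\mathbb C[x_1,\dots,x_n]$ depending only on $\epsilon$, $n$, $\deg G$. In the actual proof this uniformity comes from the degenerate branch: when a multiplicative relation $u_1^{m_1}\cdots u_n^{m_n}\approx$ const of bounded exponent occurs, one performs an explicit unimodular change of variables, obtains a new polynomial $G_1$ in fewer effective variables, records a polynomial $H_1$ (ranging over the finitely many matrices $A_1$ with bounded entries) whose nonvanishing at $\mathbf u$ guarantees $G_1$ again has no monomial or repeated factors, and iterates $n-1$ times down to the one-variable case, which is settled by resultants; $\Sigma$ is read off from the $H_i$ and the vanishing subsums. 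Nothing in your proposal plays the role of this reduction, so the claims (Z1), (Z2) and the uniformity in $S$ remain unsupported. In short: correct skeleton and correct auxiliary tools, but the derivative/gcd reduction and the explicit multiplicative-degeneracy reduction -- the two pillars of \cite[Theorem 4]{GNSW} -- are absent, and the Wronskian truncation you substitute for them would not go through.
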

 The construction of the set of polynomials $\Sigma$ follows from the specialization lemmas \cite[Lemma 15 and 16]{GNSW}.  In particular, it is evident that $\Sigma$ is independent of the choice of  $S$.  Note that $\mathbf u:=(u_1,\hdots,u_n)\in ({\cal O}_{S}^*)^{n}\setminus Z$ means that $F(\mathbf u)\ne 0$ (a nontrivial function and hence not identically zero) for every $F\in \Sigma$.  This formulation is closely analogous to the number field case.
Alternatively, one may view $\mathbf u$ as a morphism: $\mathbb P^1(\mathbb C)\setminus S \to \mathbb C^n$; then the condition $F(\mathbf u)\ne 0$ for every $F\in \Sigma$ is equivalent to saying that the image $\mathbf u(\mathbb P^1(\mathbb C)\setminus S)$  is not contained in $Z$.

We will demonstrate how to apply the above theorem to show the following version of Main Theorem for rational case.

 \begin{theorem}\label{rational}
Let $G$ be a non-constant  homogeneous polynomial   in $\mathbb C[x_0,\cdots, x_n]$ with no monomial factors and no repeated factors.    
Assume  $[G=0]$  is in general position with the coordinate hyperplanes $H_i=[x_i=0]$, $0\le i\le n$. 
Then, for any $\epsilon >0$, there exist  effectively computable positive integers $\ell$ and a proper Zariski closed subset $Z$ of $\mathbb{P}^n({\Bbb C})$ such that the following two inequalities hold: 
\begin{itemize}
	\item[\rm (i) ] $N (G(\mathbf{f}))  -N^{(1)} (G(\mathbf{f}))\le  \epsilon \deg \mathbf{f}$, and 
	\item[\rm (ii) ] $N^{(1)} (G(\mathbf{f}))\ge  (\deg  G- \epsilon)\deg \mathbf{f}$,
\end{itemize} 
for any non-constant entire rational curve  $\mathbf{f}=(f_0,\hdots,f_n)$, where $f_0,\hdots,f_n\in\mathbb C[z]$ are  polynomials without common zeros,  provided that  $\mathbf{f}(\mathbb C)\not\subset Z$ and that the following condition is satisfied:
for each $0\le i\le n$  and $\p\in \mathbb P^1({\Bbb C})$,  if ${\rm ord}_{\p} (f_i)>0$, then 
${\rm ord}_{\p} (f_i)\ge \ell$.  
 Here,  $\deg \mathbf{f}=\max\{\deg f_0,\hdots,\deg f_n\}$.
  \end{theorem}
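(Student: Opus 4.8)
The plan is to reduce Theorem~\ref{rational} to the ``omitting situation'' governed by Theorem~\ref{GNSW}: in the rational (function-field) setting this means reducing to $S$-units, which is the exact counterpart of replacing $\mathbb C$ by the parabolic Riemann surface $\mathcal Y_f=\mathbb C\setminus f^{-1}(D)$ in the analytic case. First observe that it suffices to prove (ii). Indeed $G(\mathbf f)$ has degree $\le(\deg G)(\deg\mathbf f)$ in $z$, so $N(G(\mathbf f))\le(\deg G)(\deg\mathbf f)$; hence, assuming (ii) holds,
$$N(G(\mathbf f))-N^{(1)}(G(\mathbf f))\ \le\ (\deg G)(\deg\mathbf f)-(\deg G-\epsilon)(\deg\mathbf f)\ =\ \epsilon\deg\mathbf f,$$
which is (i). The degenerate case $G(\mathbf f)\equiv0$ is excluded by taking $[G=0]\subseteq Z$, since then $\mathbf f(\mathbb C)\not\subseteq Z$ forces $\mathbf f(\mathbb C)\not\subseteq[G=0]$.

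To prove (ii), write $\mathbf f=(f_0,\dots,f_n)$ in reduced form with $\mathrm{ord}_{\mathbf p}(f_i)\ge\ell$ whenever $f_i(\mathbf p)=0$, and set
$$S\ :=\ \{\infty\}\cup\mathbf f^{-1}\!\Bigl(\textstyle\bigcup_{i=0}^{n}H_i\Bigr)\ =\ \{\infty\}\cup\{\,\mathbf p\in\mathbb C:\ f_i(\mathbf p)=0\ \text{for some }i\,\},\qquad u_i:=f_i/f_0\quad(1\le i\le n).$$
Every zero and pole of each $u_i$ lies over $S$, so $\mathbf u:=(u_1,\dots,u_n)\in(\mathcal O_S^*)^{n}$; and since $\mathbf f=[1:u_1:\cdots:u_n]$ is in reduced form, the height $h(\mathbf u):=h([1:u_1:\cdots:u_n])$ is exactly $\deg\mathbf f$. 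Put $\widetilde G(x_1,\dots,x_n):=G(1,x_1,\dots,x_n)$, so that $G(\mathbf f)=f_0^{\deg G}\,\widetilde G(\mathbf u)$; as $f_0$ has no zero outside $S$, every zero of $\widetilde G(\mathbf u)$ outside $S$ is a zero of $G(\mathbf f)$, whence $N^{(1)}(G(\mathbf f))\ge N_S^{(1)}(\widetilde G(\mathbf u))$. The general-position hypothesis is used precisely to check that $\widetilde G$ meets the hypotheses of Theorem~\ref{GNSW}: since no $x_i$ divides $G$, $\deg\widetilde G=\deg G$ and $\widetilde G$ inherits from $G$ the absence of monomial factors and of repeated factors; and since the coordinate point $e_j=\bigcap_{i\ne j}H_i$ does not lie on $[G=0]$ (else $n+1$ of the given divisors would meet there), we get $\widetilde G(0,\dots,0)=G(e_0)\ne0$ and, the monomial $x_j^{\deg G}$ thus occurring in $G$, $\deg_{x_j}\widetilde G=\deg G$ for $1\le j\le n$.

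Now the ramification hypothesis bounds $|S|$: each point of $S\setminus\{\infty\}$ carries a zero of some $f_i$ of order $\ge\ell$, and $\sum_{\mathbf p}\mathrm{ord}_{\mathbf p}(f_i)=\deg f_i\le\deg\mathbf f$, so grouping by $i$ gives $|S|\le(n+1)\deg\mathbf f/\ell+1$, hence $\max\{1,|S|-2\}\le(n+1)\deg\mathbf f/\ell$. Apply Theorem~\ref{GNSW} to $\mathbf u$ and $\widetilde G$ with the parameter $\epsilon$: it produces an effective constant $c_0=c_0(\epsilon,n,\deg G)$ and a proper Zariski closed $Z'\subset\mathbb A^n$, the zero locus of a finite set $\Sigma$ with properties (Z1), (Z2). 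If alternative~(1) were in force, then (comparing the relevant height with $\deg\mathbf f$) we would get $\deg\mathbf f\le C(n)\,c_0\,(n+1)\deg\mathbf f/\ell$ for an explicit $C(n)$, forcing $\ell$ below an effectively computable bound in $c_0$ and $n$; so we fix $\ell$ to be any integer exceeding this bound, and alternative~(2) then holds. Let $Z\subset\mathbb P^n(\mathbb C)$ be the Zariski closure of the image of $Z'$ under $(u_1,\dots,u_n)\mapsto[1:u_1:\cdots:u_n]$, enlarged by $[G=0]$; by (Z1), (Z2) this $Z$ is explicitly determined and of controlled degree, and $\mathbf f(\mathbb C)\not\subseteq Z$ guarantees $\mathbf u\notin Z'$. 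Finally (2b) yields $N^{(1)}(G(\mathbf f))\ge N_S^{(1)}(\widetilde G(\mathbf u))\ge(\deg G-\epsilon)\deg\mathbf f$, which is (ii), and (i) follows as above.

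The step I expect to require the most care, once Theorem~\ref{GNSW} is granted, is matching its lower bound against the \emph{projective} height $\deg\mathbf f=h([f_0:\cdots:f_n])$ rather than against the smaller affine quantity $\max_{1\le i\le n}\deg u_i$ attached to one affine chart (which can be as small as $\tfrac{1}{n}\deg\mathbf f$, and would dilute the constant $\deg G$ in (ii) by a factor of $n$); this is why the reduction must retain the full reduced tuple $\mathbf f$, so that $\mathbf u$ is viewed as a point of the torus $\mathbb G_m^{n}\subset\mathbb P^n$ whose height with respect to $\mathcal O(1)$ is $\deg\mathbf f$. Everything is effective because $c_0$, $|\Sigma|$ and the degrees of the members of $\Sigma$ in Theorem~\ref{GNSW} are effective in $\epsilon$, $n$ and $\deg G$, and $\ell$ was fixed by an explicit inequality in them. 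We emphasize that the genuine difficulty lies not here but in the analytic Main Theorem, where no function-field input is available and one must instead develop the parabolic analogue of Theorem~\ref{GNSW} on $\mathcal Y_f$; the present rational statement is the model case in which that input already exists.
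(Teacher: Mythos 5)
Your treatment of the core inequality (ii) is essentially the paper's own argument: you form the finite set $S$ of zeros of the $f_i$ (plus $\infty$), pass to the $S$-units $u_i=f_i/f_0$, bound $|S|\le (n+1)\deg\mathbf{f}/\ell+1$ from the ramification hypothesis, use general position to check that $\widetilde G=G(1,x_1,\dots,x_n)$ meets the hypotheses of Theorem~\ref{GNSW}(2b), and eliminate alternative (1) by choosing $\ell$ above an effective bound, exploiting (as the paper stresses) that $c_0$ and the exceptional set of Theorem~\ref{GNSW} do not depend on $S$. Where you genuinely diverge is (i): you deduce it from (ii) together with the trivial count $N(G(\mathbf{f}))\le \deg G\cdot\deg\mathbf{f}$, whereas the paper proves (i) separately, controlling the zeros of $G_0(\mathbf u)$ lying inside $\tilde S_{\mathbf f}$ by Proposition~\ref{ProximityAffine} and then summing the resulting estimate over all $n+1$ charts $(f_0/f_i,\dots,f_n/f_i)$. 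Your shortcut is valid in the rational case (and it mirrors how the paper itself handles part (i) of the analytic Main Theorem via the First Main Theorem), and it spares you both the subsum hypersurfaces of Proposition~\ref{ProximityAffine} and the chart-summing.

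The one substantive caveat is the point you flag yourself: Theorem~\ref{GNSW}(2b), as quoted, gives the lower bound against $\max_{1\le i\le n}\deg u_i$, not against $\deg\mathbf{f}$, and these can differ by a factor up to $n$ even after the reduction (e.g.\ $f_0=g_1g_2$, $f_1=g_1g_3$, $f_2=g_2g_3$ with the $g_j$ coprime $\ell$-th powers gives $\max_i\deg u_i=\tfrac12\deg\mathbf{f}$ in every chart), so concluding $(\deg G-\epsilon)\deg\mathbf{f}$ in (ii) requires the GCD input in a form keyed to the height of the point $[1:u_1:\cdots:u_n]$, as you indicate but do not justify. The paper's write-up of \eqref{htgbd} and \eqref{truncate1} makes the same identification silently, so you are not worse off than the source on this point; note, however, that your derivation of (i) inherits the sensitivity (it needs the full constant $\deg G$ in (ii)), while the paper's chart-summing proof of (i) uses $\max_i\deg u_i\le\deg\mathbf{f}$ only as an upper bound and is immune to it. Finally, add the coordinate hyperplanes to $Z$ (as the paper does in the analytic proof) so that no $f_i$ vanishes identically and the $u_i$ are genuine $S$-units.
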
 
In addition to  Theorem \ref{GNSW}, we also require the following result to prove Theorem \ref{rational}.
\begin{proposition}[{\cite[Corollary 6]{GNSW}}] \label{ProximityAffine} 
Let $G=\sum_{\mathbf i\in I_G}a_{\mathbf i}{\mathbf x}^{\mathbf i}\in \mathbb C[ x_1,\hdots, x_n]$ be a non-constant polynomial, where  $a_{\mathbf i}\ne 0$ if $\mathbf i\in I_G$. Assume that $G(0,\dots,0)\ne 0$.  Let $Z$ be the Zariski closed subset that is the union of hypersurfaces of $\mathbb A^n$ of the form  $\sum_{\mathbf i\in I} a_{\mathbf i}{\mathbf x}^{\mathbf i}=0$ where $I$ is a non-empty subset of $I_G$.
 Let $S$ be a finite subset of  $\mathbb P^1(\mathbb C)$.  Then,
for all $(u_1,\hdots,u_n)\in (\mathcal O_S^*)^n\setminus Z$, we have 
 \begin{align*} 
 \sum_{\p\in S} {\rm ord}_{\p}^+ (G(u_1,\hdots,u_n)) \le  
\tilde c \max\{1, |S|-2\}  
  \end{align*}
  where $\tilde c=\frac12 \binom{n+\deg G}{n}  (\binom{n+\deg G}{n}+1)$.
\end{proposition}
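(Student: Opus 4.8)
The plan is to recognise $\sum_{\mathbf p\in S}{\rm ord}_{\mathbf p}^+(G(u_1,\dots,u_n))$ as the ``$S$-proximity of the torus point $(u_1,\dots,u_n)$ to the hypersurface $[G=0]$'' and to bound it by a function-field Second Main Theorem. First I would unwind the hypothesis: since $[G=0]$ is itself one of the hypersurfaces constituting $Z$ (the case $I=I_G$), the assumption $(u_1,\dots,u_n)\notin Z$ gives $G(u_1,\dots,u_n)\neq 0$ and, more importantly, that no subsum $\sum_{\mathbf i\in I}a_{\mathbf i}\mathbf u^{\mathbf i}$ with $\emptyset\neq I\subseteq I_G$ vanishes. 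Each $u_\nu\in\mathcal O_S^*$, so every monomial $a_{\mathbf i}\mathbf u^{\mathbf i}$ is an $S$-unit; thus $G(\mathbf u)$ is a nonzero $\mathbb C$-linear combination of at most $N:=\binom{n+\deg G}{n}$ $S$-units admitting no vanishing subsum, and the quantity to be bounded is the total order of vanishing of this combination along the places of $S$ — the crux being that this must be controlled \emph{independently of the heights} $\deg u_\nu$.

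Next I would reduce to a linearly nondegenerate morphism. Let $V\subseteq\mathbb C(t)$ be the $\mathbb C$-span of $\{\mathbf u^{\mathbf i}:\mathbf i\in I_G\}$, choose a basis of $V$ of the form $\{\mathbf u^{\mathbf j}:\mathbf j\in B\}$ with $B\subseteq I_G$, expand $G(\mathbf u)=\sum_{\mathbf j\in B}b_{\mathbf j}\mathbf u^{\mathbf j}$, and set $B'=\{\mathbf j\in B:b_{\mathbf j}\neq 0\}$, which is nonempty because $G(\mathbf u)\neq 0$. Then $G(\mathbf u)=\sum_{\mathbf j\in B'}b_{\mathbf j}\mathbf u^{\mathbf j}$ is a combination of $|B'|\le N$ linearly independent $S$-units with every coefficient nonzero, so the morphism $\phi=[\mathbf u^{\mathbf j}]_{\mathbf j\in B'}\colon\mathbb P^1\to\mathbb P^{|B'|-1}$ is linearly nondegenerate; the hyperplane $H_G:=\{\sum_{\mathbf j\in B'}b_{\mathbf j}y_{\mathbf j}=0\}$ and the $|B'|$ coordinate hyperplanes are automatically in general position (general position could fail only at a coordinate point, which lies on $H_G$ exactly when the corresponding $b_{\mathbf j}$ vanishes); and, writing $\phi$ in reduced form, $\phi^*(\{y_{\mathbf j}=0\})$ is supported on $S$ for each $\mathbf j$, while $\phi^*(H_G)$ coincides with ${\rm div}_0(G(\mathbf u))$ away from $S$.

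Then I would apply the truncated (Cartan-type, i.e.\ Wronskian) Second Main Theorem over $\mathbb C(t)$ to $\phi$ and the $|B'|+1$ general-position hyperplanes $H_G,\{y_{\mathbf j}=0\}_{\mathbf j\in B'}$. Over $\mathbb P^1$ this is an elementary estimate on $\deg W(\phi)$ and produces a bound of the shape $\deg\phi\le N_\phi^{(|B'|-1)}(H_G)+\sum_{\mathbf j}N_\phi^{(|B'|-1)}(\{y_{\mathbf j}=0\})+\binom{|B'|}{2}\max\{1,|S|-2\}$, where the triangular number $\binom{|B'|}{2}$ comes from the Wronskian and the factor $\max\{1,|S|-2\}$ from the Euler characteristic $2g-2+|S|=|S|-2$ of $\mathbb P^1$ relative to $S$ (the ``$1$'' covering the degenerate range $|S|\le 2$, in which the $u_\nu$ are essentially constant). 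Each coordinate term is supported on $S$ and hence $\le(|B'|-1)|S|$; the off-$S$ part of $N_\phi^{(|B'|-1)}(H_G)$ is the truncated zero count of $G(\mathbf u)$ away from $S$; and the First Main Theorem gives $\deg\phi=N_\phi(H_G)=\sum_{\mathbf p\in S}i_{\mathbf p}(\phi,H_G)+(\text{zeros of }G(\mathbf u)\text{ off }S)$, so cancelling the off-$S$ zeros (now counted with bounded multiplicity) bounds $\sum_{\mathbf p\in S}i_{\mathbf p}(\phi,H_G)$ by a constant of size $\binom{N+1}{2}$ times $\max\{1,|S|-2\}$. A local comparison ${\rm ord}_{\mathbf p}^+(G(\mathbf u))\le i_{\mathbf p}(\phi,H_G)+{\rm ord}_{\mathbf p}^-(h)$ (with $h$ the clearing factor of $\phi$), plus a bound of the same size for the $S$-supported error $\sum_{\mathbf p\in S}{\rm ord}_{\mathbf p}^-(h)$ gotten from the Wronskian of the monomials $\{\mathbf u^{\mathbf j}\}$ themselves, then yields $\sum_{\mathbf p\in S}{\rm ord}_{\mathbf p}^+(G(\mathbf u))\le\tfrac12\binom{n+\deg G}{n}\bigl(\binom{n+\deg G}{n}+1\bigr)\max\{1,|S|-2\}$ after absorbing constants; this is exactly \cite[Corollary 6]{GNSW}.

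The hard part will be the height-independence of the final estimate. The function $G(\mathbf u)$ has on the order of $(\deg G)\max_\nu\deg u_\nu$ zeros away from $S$, so the argument succeeds only because the \emph{truncated} form of the Second Main Theorem counts those zeros with bounded multiplicity, allowing them to be absorbed into $\deg\phi$ and leaving only the genuinely $S$-localised contribution. A related technical point, which has to be controlled to keep the constant polynomial rather than exponential in $N$, is the case $\mathbf 0\notin B'$: there the monomials appearing in $G(\mathbf u)$ acquire a common factor supported on $S$, whose degree must be bounded by $O(|S|)$ using the same Wronskian input applied directly to the $\mathbf u^{\mathbf j}$. I would either carry out this bookkeeping or, as here, simply invoke \cite[Corollary 6]{GNSW}; I would also remark that Proposition~\ref{ProximityAffine} uses only the ``no vanishing subsum'' input of that machinery, which is precisely why its exceptional locus $Z$ can be taken to be the explicit union of subsum hypersurfaces rather than the larger, $\epsilon$-dependent locus appearing in Theorem~\ref{GNSW}.
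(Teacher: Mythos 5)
You should know at the outset that the paper contains no proof of Proposition~\ref{ProximityAffine}: it is imported verbatim from \cite[Corollary 6]{GNSW} and used as a black box in the proof of Theorem~\ref{rational}. So your closing decision to ``simply invoke \cite[Corollary 6]{GNSW}'' coincides exactly with what the paper does, and your overall reading of the statement --- the left-hand side is the $S$-proximity of a sum of at most $N:=\binom{n+\deg G}{n}$ $S$-units to the hyperplane cut out by the sum, to be bounded by a truncated Wronskian (Cartan/Borel-type) estimate, with $\max\{1,|S|-2\}$ coming from the Euler characteristic and a constant of size $\binom{N+1}{2}=\tilde c$ --- is the right mechanism; it is also the mechanism this paper uses for its parabolic analogues (Theorem~\ref{trunborel} and its applications in Section~\ref{abcparabolic}).

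The sketch you interpose, however, has a genuine gap at the reduction step. Re-expanding $G(\mathbf{u})=\sum_{\mathbf{j}\in B'}b_{\mathbf{j}}\mathbf{u}^{\mathbf{j}}$ in a monomial basis $B\subseteq I_G$ discards the only information the hypothesis $(u_1,\dots,u_n)\notin Z$ provides, namely that no subsum with the \emph{original} coefficients $a_{\mathbf{i}}$ vanishes; that condition does not survive the change of coefficients, and indeed $b_{\mathbf{0}}$ can vanish even though $a_{\mathbf{0}}\neq 0$, which is precisely your ``$\mathbf{0}\notin B'$'' case. Your proposed remedy there --- bounding the common vanishing of the $\mathbf{u}^{\mathbf{j}}$ along $S$ by ``the same Wronskian input applied directly to the $\mathbf{u}^{\mathbf{j}}$'' --- cannot work: for linearly independent nonconstant monomials in $S$-units this common vanishing is not bounded by any function of $n$, $\deg G$ and $|S|$ alone (take $u_1=t^M$, $S=\{0,\infty\}$, and the monomials $u_1,u_1^2$: their common zero at $0$ has order $M$, while $\max\{1,|S|-2\}=1$). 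Any bound on that common factor must therefore use the original expansion --- the nonzero constant term $a_{\mathbf{0}}$ and the no-vanishing-subsum condition --- and in that case it is essentially the proposition being proved, so the patch is circular. The standard route (and, up to packaging, GNSW's) avoids the re-expansion altogether: since $\mathbf{u}\notin Z$, the sum $\sum_{\mathbf{i}\in I_G}a_{\mathbf{i}}\mathbf{u}^{\mathbf{i}}$ is a sum of at most $N$ $S$-units admitting no vanishing subsum with these fixed coefficients, and one applies the function-field Borel/Brownawell--Masser-type truncated estimate (the counterpart of Theorem~\ref{trunborel}, whose proof handles possible linear dependence among the terms by the minimal-vanishing-subsum induction, never changing the coefficients) directly to this relation; the counting functions of the unit terms are supported on $S$, and the constant $\tilde c=\frac12 N(N+1)$ comes out with no common-factor bookkeeping.
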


 The proof of Theorem \ref{rational} (i)  can be outline as the follows.
Given a rational map  $\mathbf{f}=(f_0,\hdots,f_n)$, where $f_0,\hdots,f_n\in\mathbb C[z]$ are  polynomials without common zeros, we define a set $S_\mathbf{f}$ consisting of the zeros of all $f_i$.  With this choice, each coordinate function $f_i$ becomes an $S_\mathbf{f}$-unit, so that Theorem \ref{GNSW} (ii) can be applied to estimate the multiplicities of the zeros of $ G(\mathbf f)$ outside $S_\mathbf{f}$.  We then use Proposition \ref{ProximityAffine} to estimate the zeros of $ G(\mathbf f)$ inside $S_{\mathbf{f}}$.
Crucially, the constants and exceptional sets appearing in Theorem \ref{GNSW} and Proposition \ref{ProximityAffine} do not depend on $S$, allowing this method to be used for all rational maps. 
 
 \begin{proof}[Proof of Theorem \ref{rational}]
 Denote by $\ell$ a sufficiently large integer to be determined later.
  Let  $\mathbf{f}=(f_0,\hdots,f_n)$, where $f_0,\hdots,f_n\in\mathbb C[z]$ are  polynomials without common zeros. 
 By the assumption,   for each $0\le i\le n$  and  $\p\in \mathbb P^1({\Bbb C})$, if ${\rm ord}_{\p} (f_i)>0$, then 
${\rm ord}_{\p} (f_i)\ge \ell$. 
   Let
\begin{align}\label{setS}
\tilde S_\mathbf{f}:=\{\p\in \mathbb P^1(\mathbb C) \,|\, {\rm ord}_{\p}( f_i)>0\quad\text{for some $0\le i\le n$}\} \cup\{\infty\}.
\end{align}
Then
\begin{align}\label{sizeS}
|\tilde S_\mathbf{f}  |\le \sum_{i=0}^n  N_{S}^{(1)}(f_i)+ 1\le  \frac{n+1}{\ell} \max_{0\le i\le n}\{\deg f_i\} +1,
\end{align}
and $f_i\in\mathcal O_{\tilde S_\mathbf{f} }^*$ for   $0\le i \le n$.  
  Let  $\widetilde G(x_1,\hdots,x_n):=G(1, x_1,\hdots,x_n)$ and $u_i=\frac{f_i}{f_0}\in\mathcal O_{\tilde S_\mathbf{f}}^*$, $1\le i\le n$.  Then  $\widetilde G(u_1,\hdots,u_n):=f_0^{-\deg G}G({\mathbf f}).$ 
An elementary argument shows that 
\begin{align}\label{deg}
 \max_{1\le i\le n}\{ \deg u_i\}\le   \max_{0\le i\le n}\{ \deg f_i\}\le \sum_{j=1}^n \deg u_j.
\end{align}
 
Let $\epsilon>0$ be given. We now apply Theorem \ref{GNSW}   for  $\widetilde G\in\mathbb C[x_1,\hdots,x_n]$ and $\tilde S_{\mathbf{f}}$
  to find a  positive integer $c_0$  and  
a proper Zariski closed subset $Z_0$ of $\mathbb{A}^n $ independent of $\tilde S_\mathbf{f}$  such that   we have either the inequality
\begin{align}\label{htgbd}  
 \max_{0\le i\le n}\{ \deg f_i\}(=\deg \mathbf{f}) \le  c_0   \max\{1, |\tilde S_\mathbf{f}|-2\} 
 \end{align} holds 
or  the following two inequalities hold:
\begin{align}\label{mutiple1}  
  N_{\tilde S_\mathbf{f}}(G({\mathbf f}))-N_{\tilde S_\mathbf{f}}^{(1)}(G({\mathbf f}))=N_{\tilde S_\mathbf{f}}( \widetilde G(u_1,\hdots,u_n) )-N_{\tilde S_\mathbf{f}}^{(1)}( \widetilde G(u_1,\hdots,u_n))\le  \epsilon \deg \mathbf{f},
 \end{align} 
\begin{align}\label{truncate1}
  N_{\tilde S_\mathbf{f}}^{(1)}(G({\mathbf f}))=N_{\tilde S_\mathbf{f}}^{(1)}(\widetilde G(u_1,\hdots,u_n))\ge  (\deg  G- \epsilon)\cdot  \deg \mathbf{f},
 \end{align} 
 if $\mathbf{f} (\mathbb C)\not\subset \tilde Z_0$, where $\tilde Z_0$ is the projective closure of $Z_0$ in $\mathbb P^n({\Bbb C})$.
 Since the constant $c_0$ and the exceptional set $Z$ in Theorem \ref{GNSW} do  not depend on $S$, it is important to note that $c_0$ and $ \tilde Z_0$ do not depend on $\mathbf f$ as well.
We first show that \eqref{htgbd}  does not hold if $\ell>2c_0n$.  Indeed, if  \eqref{htgbd} holds  with $\ell>2c_0n$, then it follows from \eqref{sizeS} and the choice of $\ell$ that  \eqref{htgbd} yields
\begin{align*} 
 \max_{0\le i\le n}\{ \deg f_i\} 
\le  \frac{ (n+1)c_0}{\ell}  \max_{0\le i\le n}\{ \deg f_i\}  +c_0< \frac12\max_{0\le i\le n}\{ \deg f_i\}  +c_0.
\end{align*}
Therefore, $\max_{0\le i\le n}\{ \deg f_i\} <  2c_0.$  This is not possible since
  the $f_i$ are not all constant, and by our assumption that every zero has multiplicity at least $\ell$, we have $2nc_0<\ell <\max_{0\le i\le n}\{ \deg f_i\}$.

We continue to let  $\ell>2c_0n$.  As we have shown that \eqref{htgbd} does not hold, it follows that \eqref{mutiple1} and \eqref{truncate1} both hold if $\mathbf{f} (\mathbb C)\not\subset \tilde Z_0$.
Then it follows trivially from \eqref{truncate1} that 
 \begin{align} 
 N^{(1)} ( G(\mathbf{f}))\ge N_{\tilde S_\mathbf{f}}^{(1)}(G({\mathbf f}))\ge  (\deg  G- \epsilon)\cdot  \deg \mathbf{f}.
 \end{align}
This shows  (ii) holds. 

It remains to show  (i).  We now compute zeros for points in $\tilde S_\mathbf{f}$.  We will use Propostion \ref{ProximityAffine}.
Let $W$ be the Zariski closed subset   of $\mathbb P^n(\mathbb C)$ containing all hypersurfaces defined by all possible  subsums (including $G$) in the expansion of $G$.   Assume that  $\mathbf f(\mathbb C)\not\subset W$.
Since $\widetilde G(0,\dots,0)=G(1,0,\hdots,0)\ne 0$, we now apply Propositon  \ref{ProximityAffine} to get
 \begin{align*} 
 \sum_{\p\in \tilde S_\mathbf{f}} {\rm ord}^+_{\p}(\widetilde G(u_1,\hdots,u_n)) \le\tilde c   \max\{1, |\tilde S_\mathbf{f}|-2\},
  \end{align*}
where $\tilde c=\frac12 \binom{n+deg G}{n}  (\binom{n+\deg G}{n}+1)$.
Together with \eqref{sizeS} and that  $\ell\le \max_{0\le i\le n}\{ \deg f_i\}$, it yields
 \begin{align*} 
 m_{\tilde S_{\mathbf{f}}}(\widetilde G(u_1,\hdots,u_n) ):= \sum_{\p\in \tilde S_\mathbf{f}} {\rm ord}^+_{\p}(\widetilde G(u_1,\hdots,u_n))  \le  \frac{(n+1)\tilde c}{\ell}  \max_{0\le i\le n}\{ \deg f_i\} \le \epsilon \deg \mathbf{f} 
  \end{align*}
provided $\ell >(n+1)\tilde c_0\epsilon^{-1}.$
Combining with \eqref{mutiple1}, we have   
\begin{align}\label{mutiple0}  
 N (\widetilde G(u_1,\hdots,u_n) )-N^{(1)} ( \widetilde G(u_1,\hdots,u_n))\le  2\epsilon \deg \mathbf{f}.
 \end{align}
  We now repeat the above arguments for $(\frac{f_0}{f_i},\cdots,\frac{f_n}{f_i})$, $0\le i\le n$.  Then we find a Zariski closed subset $Z$ and a positive constant $c$, independent of $\mathbf f$, such that for each $0\le i\le n$ we have
\begin{align}\label{mutiplei}  
 N \left(G(\frac{f_0}{f_i},\cdots,\frac{f_n}{f_i}) \right)-N^{(1)} \left( G(\frac{f_0}{f_i},\cdots,\frac{f_n}{f_i})\right)\le  2\epsilon \deg \mathbf{f}. 
 \end{align}
Since $f_0,\hdots,f_n$ are  polynomials without common zeros, for each $z\in\mathbb C$ there exists at least one $f_i$ such that $f_i(z)\ne 0$.  Then 
$$
{\rm ord}^+_z(G(f_0,\hdots,f_n))={\rm ord}_z\left(G(\frac{f_0}{f_i},\cdots,\frac{f_n}{f_i})\right).
$$
This shows that 
 \begin{align*} 
 N(G(\mathbf{f}))  -N^{(1)} (G(\mathbf{f}))&\le \sum_{i=0}^n \left( N \left( G(\frac{f_0}{f_i},\cdots,\frac{f_n}{f_i}) \right)-N^{(1)} \left( G(\frac{f_0}{f_i},\cdots,\frac{f_n}{f_i})\right)\right)\\
 &\le  2(n+1)\epsilon \deg \mathbf{f}.   \quad\text{(by \eqref{mutiplei})}
 \end{align*} 
 The assertion   (i) can be derived by adjusting $\epsilon$.
\end{proof}

\subsection{Motivation for the proof of the Main Theorem}
 In Section~\ref{rationalcase}, the key to proving Theorem~\ref{rational} is the use of  the set $S$ (depending on the map) that contains all zeros and poles of each coordinate function of the map.  
The crucial observation is that all coordinate functions then become $S$-units, allowing us to apply Theorem~\ref{GNSW}.  
Importantly, the constants and the exceptional set obtained in Theorem~\ref{GNSW} are independent of $S$, which makes the argument work uniformly for all rational maps.

To extend this viewpoint to the analytic setting, we let $\mathcal E_f$ denote the set of all zeros of $f_i$, $0\le i\le n$, and define
\[
\mathcal Y_f := \mathbb C \setminus \mathcal E_f,
\]
where $f = (f_0,\ldots,f_n)$ is a holomorphic map such that the entire functions $f_i$, $0\le i\le n$, have no common zeros.  
Then $\mathcal Y_f$ is a parabolic open Riemann surface equipped with a smooth exhaustion function $\sigma$ for which the smooth $(1,1)$-form $dd^{c}\!\log \sigma$ has finite total mass on $\mathcal Y_f$.  
Although $\sigma$ may not be  a parabolic  exhaustion for   ${\mathcal Y}_f$, Chen et al.\ (see \cite{CHSX}) observed that, after appropriate modifications, the key ingredients of Nevanlinna theory still admit analogues in this setting.

\section{Nevanlinna theory and the GCD theorem on parabolic Riemann surfaces}\label{parabolic}

\subsection{Parabolic Riemann surfaces and Jensen's formula}\label{Jensen}    A
 non-compact Riemann surface ${\mathcal Y}$ is {\it parabolic} if it  admits  a smooth exhaustion function
 $\sigma: {\mathcal Y} \rightarrow [0, \infty)$
 such that $\log \sigma$ is harmonic outside a
 compact subset of ${\mathcal Y}$. Such $\sigma$ is called a {\it parabolic exhaustion}. Nevanlinna theory on parabolic open Riemann surfaces was recently developed by P${\rm{\breve{a}}}$un and Sibony \cite{PS}. 
Chen et al.  \cite{CHSX} considered the case when 
  ${\mathcal Y}:={\Bbb C}\backslash  \mathcal E$, where ${\mathcal E}:=\{a_j\}_{j=1}^{\infty}$  is a discrete countable set of points in ${\Bbb C}$.    The surface  ${\mathcal Y}$ is parabolic due to \cite[Lemma 10.1(g), p. 75]{Stoll} by taking $\phi: {\mathcal Y} \rightarrow {\Bbb C}$ as the
restriction of a transcendental function sending ${\mathcal E}$ to $0$. 
However  the smooth exhaustion function 
 $\sigma: {\mathcal Y} \rightarrow [0, \infty)$ constructed in \cite{CHSX},  which serves as our purpose,  is not necessarily compactly supported,  although the smooth (1,1)-form  $dd^c \log \sigma$ is of finite total mass on ${\mathcal Y}$.
  Therefore the parabolic Nevanlinna theory developed by  P${\rm{\breve{a}}}$un and Sibony is no longer directly applicable to  ${\mathcal Y}$.
     This difficulty arises because  the parabolic Jensen formula (see (\ref{jensen})) acquires an additional term 
$\int_{B_r^{\sigma}} \upsilon dd^c \tau$.   A key observation is that, by shrinking the radii $r_j$  sufficiently small, we can make (see Lemma \ref{Ch})
 $c^+_{g,\sigma}(r): =\int_{B_r^{\sigma}} \log^+ |g| dd^c \tau$
to be bounded (independent of $r$),   for any meromorphic function $g$ on ${\Bbb C}$.  The subtle point, however,  is that the required shrinking depends on the particular function 
$g$, so we must ensure in our arguments that such shrinking occurs only finitely many times.
 This ``shrinking trick"  was already observed by  Chen et al.  in  \cite{CHSX} and indeed plays a central role in their work. 
 Unfortunately  many of the necessary details were not made explicit there. 
We therefore supply  the details in this and the subsequent sections.
 
  Let us first briefly review the construction of $ \sigma$  given  in Lemma 3.1 and 3.2 of  \cite{CHSX}.  Let  
 ${\mathcal Y}:={\Bbb C}\backslash  \mathcal E$, where ${\mathcal E}:=\{a_j\}_{j=1}^{\infty}$  is a discrete countable set of points in ${\Bbb C}$. 
 Up
to translation and dilation on ${\Bbb C}$, we may assume that $|a_j| >2$ for all $j \ge  1$.   We  also assume that  the radii satisfy the following conditions:
(i) ${\mathbb D}(a_j,  2r_j )$ for all $j$ and the disc $ {\mathbb D}_2$ are disjoint,  and (ii) the sum $\sum_{j\ge 1} r_j <+\infty$. 

  The 
exhaustion function $\sigma$ on ${\mathcal Y}$ is obtained by smoothing the piecewise smooth  exhaustion function $\hat{\sigma}$  given by 
 \begin{equation}\label{X1} \log \hat{\sigma}:=\log^+|z|+\sum_{j\ge 1} r_j \log^+ {r_j\over |z-a_j|}.
 \end{equation}
  Note that $\hat{\tau}:= \log \hat{\sigma}$ takes values in $[0, +\infty)$, is continuous on ${\mathcal Y}$ and is smooth (indeed harmonic) outside the circle $S(0,1) :=\{|z| = 1\}$  and the disjoint circles
 $S(a_j, r_j):=\{|z-a_j|=r_j\}$. By the Poincar\'e-Lelong formula (see, e.g., \cite{ru2021nevanlinna}),  it is clear that 
$$dd^c\hat{\tau}={1\over 2} \nu(0, 1)+  {1\over 2}\sum_{j=1}^{\infty} r_j\left(\nu(a_j, r_j)-\delta_{a_j}\right)$$
is a distribution of order $0$ and locally of finite mass, where $\nu(a_j, r_j)$ is the  Haar measure on the circle $S(a_j, r_j)$.

  The smoothing process goes as follows (see  \cite[Appendix]{CHSX}):  Let 
$$\displaystyle h(r)=\begin{cases} 0 & \text{ for $r\leq {3\over 4}$}\\
 {1\over 4\pi} {1\over 1+e^{{r-1\over (r-1)^2-1/16}}}& \text{for ${3\over 4}<r<{5\over 4}$}\\
  {1\over 4\pi}  & \text{for $r\ge {5\over 4}$}\end{cases}
  $$ and 
 \begin{align}\label{H}
 H(r):=4\pi\int_0^r h(s-c){ds\over s},
 \end{align}
 where the constant  $c\in (0, 1/2)$ is chosen such that $H$ agrees with $\log^+r$ outside $[{1\over 2}, {3\over 2}]$. 
Note that $H$  is an absolute function independent of $a_j$. 
This function $H(r)$ is smooth of $\log^+r$  with $0\leq H(r)-\log^+r \leq \log^+{3\over 2}<{1\over 2}$ and $H(r) =\log^+r$ when $H(r)\ge \log{3\over 2}$. 
Define 
\begin{align}\label{tau}
\tau:=H(|z|)+\sum_{j=1}^{\infty} r_j H\left( {r_j\over |z-a_j|}\right), ~~~~\sigma=\exp(\tau).
\end{align}
Then,  as stated  in \cite[Lemma 3.1]{CHSX}, $\sigma \ge \hat{\sigma}$ and the difference 
$\sigma-\hat{\sigma}$ is supported on 
\begin{equation}\label{U}\text{Supp}(\sigma-\hat{\sigma})\subset U:=\left(A\left(0, {1\over 2}, {3\over 2}\right)\backslash {\mathcal E}\right)\bigcup \left(\bigcup_{j=1}^{\infty} A\left(a_j, {1\over 2}r_j, {3\over 2}r_j\right)\right),
\end{equation}
where $ A(a_j, {1\over 2}r_j, {3\over 2}r_j):=\{z\in {\mathcal Y}: {1\over 2}r_j\leq |z-a_j|\leq {3\over 2}r_j\}$ are pairwise disjoint annuli. For 
$z\not\in \cup_{j=1}^{\infty}\overline{{\mathbb D}(a_j, {3\over 2}r_j)}$, $\log^+\left|{r_j\over z-a_j}\right|=H\left(\left|{r_j\over z-a_j}\right|\right)=0$ for each $j$. Thus 
$\tau(z)=H(|z|)$ and $\hat{\tau}(z)=\log^+|z|$, which are equal when $\tau(z)\ge \log {3\over 2}$.  Hence $\sigma=\hat{\sigma}$ when $\sigma\ge {3\over 2}$.
 We say that {\it such $\sigma$ is associated to the discs $\{{\mathbb D}(a_j,  2r_j )\}$} by assuming that  $\{r_j\}$ satisfy (i) and (ii) above. Note that  $\sigma$ is not necessarily  a parabolic exhaustion function because  $dd^c\log \sigma$  may not be compactly supported. 
 
   Denote by $B^{\sigma}_r: =\{y\in {\mathcal Y}:\sigma(y) <r\}$ and by $S^{\sigma}_r:=\{y\in {\mathcal Y}:\sigma(y)=r\}$. 
 Denote by $\mu_r$ the measure induced by the differential $d^c\log\sigma|_{S^{\sigma}_r}$ and write $d\mu_r = d^c\log\sigma|_{S^{\sigma}_r}$. 
Note that, from  the above discussion (see also  \cite{CHSX}),
\begin{equation}\label{X} 
B_t^{\sigma}\subset {\mathbb D}_t~~~~\mbox{for}~t\ge 1,~~~\mbox{and} ~~~~~{\mathbb D}_t\backslash  \left(\bigcup_{j=1}^{\infty}{\mathbb D}(a_j, {3\over 2}r_j)\right)\subset B_t^{\sigma}~~~~\mbox{for}~t\ge {3\over 2}.
\end{equation}
  \begin{lemma}[Lemma 3.2 in \cite{CHSX}]
 The smooth 2-form  $dd^c\log \sigma$ defines an order  0 distribution of finite mass on ${\mathcal Y}$. 
 \end{lemma}
 \begin{proof} 
We use the formula: in polar coordinates $z = a + re^{i\theta}$, for some $a \in {\Bbb C}$, and for a smooth function $\phi$, one has 
$$d^c\phi={1\over 4\pi} \left(r{\partial \phi\over \partial r} d\theta -{1\over r}{\partial \phi\over \partial \theta} dr\right).$$
The smooth 2-form  $dd^c \log \sigma$ is supported on $U$,  where $U$ is given in (\ref{U}). 
On the annulus $A(0, {1\over 2}, {3\over 2})$, in polar coordinate $z = re^{i\theta}$,   by \eqref{H}  one has (see \cite[Appendix]{CHSX})
\begin{equation}\label{taubounedpart}
dd^c \tau=dd^cH(|z|)=d\left({r\over 4\pi} {\partial H(r)\over \partial r}d\theta\right) = d(h(r-c)d\theta)=h'\left(r-c\right)  dr\wedge d\theta=O(1)  dr\wedge d\theta.\end{equation}
Thus $dd^c \tau$ has finite mass on $A(0, {1\over 2}, {3\over 2})$. 
On the annulus  $A(a_j, {1\over 2}r_j, {3\over 2}r_j)$,      since $ {\mathbb D}(a_j,  2r_j )$ for all $j$ and the disc $ {\mathbb D}_2$ are disjoint, we have similarly that  $\tau=r_j H\left( {r_j\over |z-a_j|}\right)$ on $A(a_j, {1\over 2}r_j, {3\over 2}r_j)$. Therefore,
\begin{align}\label{tau_unbounedpart2} 
dd^c \tau&=dd^c \left(r_j H\left( {r_j\over |z-a_j|}\right)\right) \nonumber \\
&=
d\left({r\over 4\pi} {\partial r_j H\left({r_j\over r}\right)\over \partial r}d\theta\right)=\left({r_j\over r}\right)^2h'\left({r_j\over r}-c\right) dr\wedge d\theta=O(1)dr\wedge d\theta.
\end{align}
Since  $\sum_{j=1}^{\infty} r_j<+\infty$,  
 the support $U$ is of finite Lebesgue measure, this proves the lemma.
 \end{proof}
 
  We  now derive  the parabolic Jensen's  formula in our setting. For any function $\upsilon: {\mathcal Y}\rightarrow (-\infty, +\infty)$ such that  $dd^c\upsilon$ is of order 0,  the parabolic Jensen's formula (see \cite[Remark 3.2, p. 22]{PS}) is stated as  
\begin{equation}\label{jensen} \int_1^r {dt\over t} \int_{B_t^{\sigma}} dd^c \upsilon=\int_{S_r^{\sigma}} \upsilon d\mu_r - \int_{B_r^{\sigma}} \upsilon dd^c \tau  \quad (r>1).
\end{equation}
Let $g$ be   meromorphic  on ${\Bbb C}$. In (\ref{jensen}), by letting $\upsilon :=\log |g|$, and also using the Poincar\'e-Lelong formula, we get the following lemma.
 \begin{lemma}[Parabolic Jensen's formula]\label{jensenh} Let $g$ be a meromorphic  function on ${\Bbb C}$.  Then 
$$
\int_{S_r^{\sigma}} \log |g|  d\mu_r =N_{g, \sigma}(0, r)- N_{g, \sigma}(\infty, r)+  c_{g,\sigma}(r),
$$
where 
$N_{g,  \sigma}(0, r):=\int_1^r \sum_{z\in B_t^{\sigma} } \max\{0,{\rm ord}_zg\} {dt\over t},$ $ N_{g, \sigma}(\infty, r):=N_{\frac1g,  \sigma}(0, r)$, and 
\begin{equation}\label{ch}
 c_{g,\sigma}(r): =\int_{B_r^{\sigma}} \log |g| dd^c \tau.
\end{equation} 
\end{lemma}
Denote by 
 \begin{equation}\label{ch+}
  c^+_{g,\sigma}(r): =\int_{B_r^{\sigma}} \log^+ |g| dd^c \tau.
\end{equation}
 \begin{lemma}\label{Ch} Let $g$ be a meromorphic function on ${\Bbb C}$. 
Let ${\mathcal Y}\subset {\Bbb C}$ be given with   $\sigma$ being associated to the discs $\{{\mathbb D}(a_j,  2r_j )\}$.
   Then we can choose $r_j$ small enough such that  there exists a positive constant  $C_g$ depending on $g$ with
 $ c^+_{g,\sigma}(r)\leq C_g$ for any  $r>0$.  
 \end{lemma}

 \begin{proof} By using (\ref{X})  for $t\ge 2$,  and notice that  $dd^c \tau $ is supported on $U$  where $U$ is given in (\ref{U}),  
 we have 
\begin{align}\label{cgt}
 c^+_{g, \sigma}(t)&=\int_{B^{\sigma}_t}\log^+ |g| dd^c \tau\leq \int_{ {\mathbb D}_t \backslash \mathcal{E}}\log^+ |g| dd^c \tau\cr
& =\int_{A(0, {1\over 2}, {3\over 2}) }\log^+  |g| dd^c  \tau+ \int_{\cup_{j\ge 1}A(a_j, {1\over 2}r_j, {3\over 2}r_j)   \cap {\mathbb D}_t} \log^+ |g| dd^c \tau.
\end{align}
 Notice that $ \log^+ |g|$ is locally integrable around the zeros or poles  of $g$ and by (\ref{taubounedpart}), we get
$$
\int_{A(0, {1\over 2}, {3\over 2}) }\log^+  |g| dd^c  \tau<M_1,
$$ 
where $M_1$ is a positive  constant depending on $g$.

  We now estimate the integration over $A(a_j, {1\over 2}r_j, {3\over 2}r_j)$. 
 If $g(a_j)\not =\infty$. Then $\log^+ |g|\leq C_1$ for some $C_1>0$ on ${\mathbb D}(a_j, {3\over 2}r_j)$. 
Then, by (\ref{tau_unbounedpart2}),
 $$\int_{A(a_j, {1\over 2}r_j, {3\over 2}r_j)}\log^+  |g| dd^c \tau
 \leq C_1 \cdot \frac{9\pi}2 C_h r_j < {1\over 2^j},$$
by further shrinking $r_j$, where  $
C_h:=\max_{ \frac12\le s\le \frac32} |h'(s-c)|.$  If $g(a_j)=\infty$. Then, locally, $g(z)=(z-a_j)^{k_j} g_j(z)$ with $g_j(a_j)\not=0$. 
 Thus  $\log^+  |g|\leq C_2\log^+{1\over |z-a_j|}$ for some $C_2>0$ on  ${\mathbb D}(a_j, {3\over 2}r_j)$.
 We have similarly 
 $$ \int_{A(a_j, {1\over 2}r_j, {3\over 2}r_j)}\log^+|g| dd^c \tau \leq  
 \frac{9\pi}2 C_2 C_h\int_{{1\over 2}r_j}^{ {3\over 2}r_j}   \log rdr \le \frac{9\pi}2 C_2 C_h r_j^{\delta},
 $$
  for some $1<\delta<2$.
 Again,  we can choose $r_j$ sufficiently small  such that 
 $$\int_{A(a_j, {1\over 2}r_j, {3\over 2}r_j) }\log^+  |g| dd^c \tau< {1\over 2^j}.$$
Therefore, with this choice of the radii $r_j$,  we obtain from  \eqref{cgt} that
$  c^+_{g,\sigma}(r)\le M_1+\sum_{j=1}^{\infty} {1\over 2^j} =M_1+1$.
\end{proof} 
  Note that the shrinking process of $r_j$ in the above lemma depends on  the given function  $g$.  So we must ensure in our arguments that such shrinking occurs only finitely many times.  Throughout the rest of the paper, we will keep shrinking the radii $r_j$ whenever necessary.  The  $\sigma$ should be understood as being adjusted accordingly,
although we will continue to denote it by the same symbol.

\subsection{The characteristic functions  in the parabolic setting}  
 We continue to let 
${\mathcal Y}:={\Bbb C}\backslash  \mathcal E$ with $\sigma$  being the exhaustion associated to the discs $\{{\mathbb D}(a_j,  2r_j )\}$,  constructed in Section \ref{Jensen}. 
Let $f:{\Bbb C}\rightarrow  {\Bbb P}^n({\Bbb C})$ be a  holomorphic map. 
We briefly recall some standard  notations in Nevanlinna theory in the parabolic context (for details, see   \cite{CHSX}, \cite{PS}).

  Let $D=\{Q=0\}$ be a hypersurface in $\mathbb P^n({\Bbb C})$, where $Q\in\mathbb C[x_0,\hdots,x_n]$ is a homogeneous polynomial  of degree $d$. 
\begin{enumerate} 
\item[1.] The proximity function of $f$ with respect to $D$ is defined as $$m_{f, \sigma}(D, r)=\int_{S_r^{\sigma}}\lambda_D(f)  d\mu_r,$$
 where $\lambda_D$ is the Weil-function on ${\Bbb P}^n({\Bbb C})$ given by $$\lambda_D({\bf x})=\log {\|{\bf x}\|^d\over |Q({\bf x})|},~~{\bf x}=[x_0:\cdots:x_n]\in \mathbb P^n({\Bbb C}),  ~~~\|{\bf x}\|:= \max\{|x_0|,\dots,|x_n|\}.$$

\item[2.] The Nevanlinna characteristic (height)  function is defined by $$T_{f, \sigma}(r):=\int_{S_r^{\sigma}}\log \|{\bf f}\|d\mu_r, ~~~~r>1,$$
 where ${\bf f}=(f_0,\hdots,f_n)$ is a reduced representation of $f$.
 The   Ahlfors-Shimizu characteristic (height)  function is defined by
 $$T^{A}_{f, \sigma}(r)= \int_1^r {dt\over t} \int_{B_t^{\sigma}} f^*\omega_{FS}, ~~~~r>1,$$
 where $\omega_{FS}$ denotes the Fubini-Study form.
 Note that $T_{f, \sigma}(r)$ depends on the choices of the representation ${\bf f}$ of $f$, while $T^{A}_{f, \sigma}(r)$ does not depend on  the representation.
 Therefore, when we write $T_{f, \sigma}(r)$, we always assume that we have fixed  a reduced a representation ${\bf f}=(f_0, \dots, f_n)$ of $f$.

 \item[3.] The counting function and the $k$-truncated counting function are defined by 
 $$N_{f,  \sigma}(D, r):=\int_1^r \sum_{z\in B_t^{\sigma}} {\rm ord}_zf^*D{dt\over t}, ~~~~~~~\mbox{and}~~N_{f, \sigma}^{(k)}(D, r):=\int_1^r \sum_{z\in B_t^{\sigma}} \min\{k, {\rm ord}_zf^*D\}{dt\over t}.$$

\item[4.] The counting function extends naturally to the sub-schemes, as well as the notion  $N_{\text{gcd}, \sigma}(f, g)(r)$, where $f,g$ are meromorphic functions on $\mathcal Y$.  See  \cite{levin2019greatest}.
\end{enumerate} 
 
 We also use the standard notation $T_f(r), m_f(D, r), n_f(D, t)$ and $N_f(D, r)$ of the Nevanlinna functions associated to a holomorphic map $f:  {\Bbb C}\rightarrow {\Bbb P}^n({\Bbb C})$.
In this case, the Ahlfors-Shimizu  and the classical Nevanlinna characteristic function agree up to an additive constant (see \cite[Theorem 1.1.19]{noguchibook}), i.e., 
$
T_f(r)-T^A_f(r)=O(1).$

\begin{theorem}[The parabolic  First Main Theorem]\label{FMT} Let ${\mathcal Y}\subset {\Bbb C}$ be as above.
Let $D=\{Q=0\}$ be a hypersurface in $\mathbb P^n({\Bbb C})$, where $Q\in\mathbb C[x_0,\hdots,x_n]$ is a homogeneous polynomial  of degree $d$.  
Let $f =(f_0,\hdots,f_n):  {\Bbb C}\rightarrow {\Bbb P}^n({\Bbb C})$ be a holomorphic curve, where $f_0,\hdots,f_n$ are entire functions without common zero.    Suppose that $ {f}({\mathcal Y})\not\subset \mbox{Supp}(D)$. 
Then
$$dT_{f, \sigma}(r) = m_{f, \sigma}(D, r)+N_{f, \sigma}(D, r) + c_{Q({\bf f}), \sigma}(r).$$
\end{theorem}}
\begin{proof}
 Following the definition of Nevanlinna characteristic function, we have 
$$dT_{f, \sigma}(r)=\int_{S_r^{\sigma}}\log \|{\bf f}\|^dd\mu_r = \int_{S_r^{\sigma}}\log {\|{\bf f}\|^d \over |Q({\bf f})|}d\mu_r + \int_{S_r^{\sigma}}\log {|Q({\bf f})|}d\mu_r.$$
Hence the assertion follows from  Lemma \ref{jensenh} together with the definition of the proximity function.
\end{proof}

 In the case of a meromorphic function $g$ on ${\mathcal Y}$, write $g=g_0/g_1$, where $g_0$ and $g_1$ are entire functions without common zeros.
 We regard  the function $g$ as the map $g: = [g_0 : g_1] :   {\mathcal Y}\rightarrow {\Bbb P}^1({\Bbb C})$.  By taking $Q(z_0, z_1)=z_1$ (so  $D= \{z_1=0\}=\infty$), we get 
$$m_{g,  \sigma}(r):=m_{g, \sigma}(\infty, r)=\int_{S_r^{\sigma}} \log {\max\{|g_0|,|g_1|\}\over |g_1|} d\mu_r = \int_{S_r^{\sigma}} \log^+|g| d\mu_r +O(1).$$
The above $m_{g,  \sigma}(r)$ is well-defined, i.e., it is independent of  the choice of $g_0$ and $g_1$.
However, the Nevanlinna height function  $T_{g, \sigma}(r)= \int_{S_r^{\sigma}} \log \max\{|g_0|, |g_1|\} d\mu_r$ is not well-defined.  Instead  we introduce a new  height function that is well-defined: 
 \begin{equation}\label{Charcmero} 
T^{M}_{g, \sigma}(r) = m_{g, \sigma}(\infty, r)+N_{g, \sigma}(\infty, r).
\end{equation} 
From the parabolic Jenen's formula above, 
\begin{equation}\label{rci}
T^{M}_{1/g, \sigma} (r)= T^{M}_{g, \sigma}(r) +c_{1/g, \sigma}(r).
\end{equation}

We now compare $T^{M}_{g, \sigma} (r)$ with $T^A_{g, \sigma} (r)$.
From parabolic Jensen's formula, 
\begin{eqnarray*}
T^A_{g, \sigma}(r) &=&\int_1^r {dt\over t} \int_{B_t^{\sigma}} dd^c   \log \max\{|g_0|,  |g_1|\}=\int_{S_r^{\sigma}}  \log \max\{|g_0|,  |g_1|\} d\mu_r  \\
&~& -  \int_{B_r^{\sigma}} \log \max\{|g_0|,  |g_1|\} dd^c \tau \\
&=& \int_{S_r^{\sigma}} \log^+ |g| d\mu_r +  \int_{S_r^{\sigma}} \log |g_1| d\mu_r   -  \int_{B_r^{\sigma}} \log \max\{|g_0|,  |g_1|\} dd^c \tau \\
 &=& T^{M}_{g, \sigma}(r) + {1\over 2} \int_{B_r^{\sigma}} \log \left(1/(1+|g|^2)\right) dd^c \tau = T^{M}_{g, \sigma}(r) + {1\over 2}c_{1/(1+|g|^2), \sigma}(r).
 \end{eqnarray*}
Similarly to the proof  of Lemma~\ref{Ch}, by  choosing  $r_j$ small enough,  there exists a constant $C>0$ with  $c^+_{1/(1+|g|^2), \sigma}(r)\leq C$. Hence
\begin{equation}\label{Amero}
|T^M_{g, \sigma}(r)-T^A_{g, \sigma}(r)|\leq C.
\end{equation}
It is important to note that these radii $r_j$ depend on the function $g$.
 \begin{proposition}[\cite{CHSX}]\label{exhuastion} 
Let  $f =(f_0,\hdots,f_n):  {\Bbb C}\rightarrow {\Bbb P}^n({\Bbb C})$ be a holomorphic curve, where $f_0,\hdots,f_n$ are entire functions without common zero.  Let ${\mathcal Y}\subset {\Bbb C}$ be given with   $\sigma$ being associated to the discs $\{{\mathbb D}(a_j,  2r_j )\}$.
Then we  can choose $r_j$ small enough such that,  for all $r\ge 1$,  the following hold: 
\begin{enumerate}
\item $O(1)\le  T_f(r)-T^A_{f, \sigma}(r)\le \log r +O(1).$  

\item $O(1)\le  T_{f}(r)-T_{f, \sigma}(r)\le \log r + O(1).$  
\end{enumerate}
 \end{proposition}
  \begin{proof}   From \eqref{X},  for $t\ge {3\over 2}$, $B_t^{\sigma}\subset {\mathbb D}_t$  and 
 $${\mathbb D}_t\backslash \left(\bigcup_{j=1}^{\infty}D(a_j, {3\over 2}r_j)\right)\subset B_t^{\sigma}.$$  Thus
 \begin{align*} & 0\leq T_f^A(r)-T^A_{f, \sigma}(r) \leq \int_{{3\over 2}}^r {dt\over t} \int_{ \left(\bigcup_{j=1}^{\infty}{\mathbb D}(a_j, {3\over 2}r_j)\right)\cap {\mathbb D}_t} f^*\omega_{FS}+O(1)\cr
 &\leq  \int_1^r {dt\over t} \int_{\bigcup_{j=1}^{\infty}{\mathbb D}(a_j, {3\over 2}r_j)} f^*\omega_{FS} +O(1)\cr
 &\leq  \int_1^r {dt\over t}\left(\sum_{j=1}^{\infty}  \int_{{\mathbb D}(a_j, {3\over 2}r_j)} f^*\omega_{FS}\right) +O(1), ~~~~(r\ge 1).
 \end{align*}
We
choose $r_j > 0$ sufficiently small so that \begin{equation}\label{Rur} \int_{{\mathbb D}(a_j, {3\over 2}r_j)} f^*\omega_{FS} <2^{-j}.\end{equation}
Hence the above estimate yields $$0\le  T_f^A(r)-T^A_{f, \sigma}(r)\le \log r +O(1), ~~~~(r\ge 1).$$
By using the fact that $T_f(r)=T_f^A(r)+O(1)$, 
this completes the proof of (i).

 To prove (ii), 
by using (\ref{jensen}) with $\upsilon:=\log\|{\bf f}\|$, we get 
$$T^A_{f, \sigma}(r)=
 \int_1^r {dt\over t} \int_{B_t^{\sigma}} dd^c \log \|{\bf f}\|
 =\int_{S_r^{\sigma}}   \log \|{\bf f}\| d\mu_r - \int_{B_r^{\sigma}}  \log \|{\bf f}\|dd^c \tau
= T_{f, \sigma}(r) - \int_{B_r^{\sigma}}  \log \|{\bf f}\| dd^c \tau.
 $$
 Thus $$|T^A_{f, \sigma}(r)-  T_{f, \sigma}(r)|\leq \int_{B_r^{\sigma}}  \log^+ \|{\bf f}\| dd^c \tau + \int_{B_r^{\sigma}}  \log^+ (1/\|{\bf f}\|)dd^c \tau.$$
 Since $f_0,\hdots,f_n$ have no common zero, we can argue similarly to the proof  of Lemma~\ref{Ch} to choose $r_j$ small enough so that there exists a constant $C>0$ with  
 $$\int_{B_r^{\sigma}}  \log^+ \|{\bf f}\| dd^c \tau+ \int_{B_r^{\sigma}}  \log^+ \frac 1{\|{\bf f}\|}  dd^c \tau \le C.$$  for any  $r>0$. 
By combining above with (i), we prove (ii). 
\end{proof} Again,  these radii $r_j$ in Proposition \ref{exhuastion}  depends on the map $f$.

\subsection{The parabolic Logarithmic Derivative Lemma and H. Cartan's theorem}  
 To establish the parabolic Logarithmic Derivative Lemma, we introduce the  weighted Euler characteristic function (see \cite[Definition 3.4]{PS}):
 $$ \mathfrak{X}^+_{\sigma}(r):=\int_{S^{\sigma}_r} \log^+ \left|d\sigma\left({\partial \over \partial z}\right)\right|^2d\mu_r.$$ 
Using the properties that the disks ${\Bbb D}(a_j ,2r_j )$ are disjoint and that $\sum_{j\ge 1} r_j < +\infty$, we see from 
\cite[ Example (2), pp. 32 and 33]{PS}  (see also \cite[(2.10)]{CHSX}) that
$$ \mathfrak{X}^+_{\sigma}(r) =  \mathfrak{X}_{\sigma}(r) +O(\log r)=N_{\mathcal E}(r)+O(\log r), \quad r>1,$$
where $N_{\mathcal E}(r):=\int_1^r n_{\mathcal E}(t){dt\over t} $, and $n_{\mathcal E}(t)$ counts the number of points in ${\mathcal E}\cap \mathbb{D}_t$ for $t\ge 1$.  In particular, 
if  ${\mathcal E}: = f^{-1}(D)$ for a holomorphic map $f: {\Bbb C}\rightarrow {\Bbb P}^n({\Bbb C})$ and a divisor  $D$  on ${\Bbb P}^n({\Bbb C})$
 with $f({\Bbb C})\not\subset \mbox{Supp} D$,   it follows (see also \cite[(4.2)]{CHSX}) that 
\begin{equation}\label{sigmaestimae}  
\mathfrak{X}^+_{\sigma}(r)\leq N_f^{(1)}(D, r) + O(\log r),\quad r>1.
\end{equation}
The above estimate is crucial to our paper.  
In \cite{CHSX}, the authors consider   the case  when $N_f(D, r)=o(T_f(r))$, and therefore  $\mathfrak{X}^+_{\sigma}(r)$ is a small function with respect to $T_f(r)$.  In our case, we treat the situation where $N_f^{(1)}(D, r)\leq {1\over \ell} T_f(r)$ for a sufficiently large $\ell$.   In this setting, $\mathfrak{X}^+_{\sigma}(r)$  should be regarded as having the same order of growth as $T_f(r)$.

\begin{lemma}[Parabolic Logarithmic Derivative Lemma]\label{DL} Let $g$ be a meromorphic function on $\mathbb C$. Let ${\mathcal Y}\subset {\Bbb C}$ be as above. For any $\delta> 0$, one has
\begin{enumerate}
\item $m_{g'/g, \sigma}(r) \le_{\rm exc} {1\over 2} ((1+\delta)^2 \log  T_{g}(r)  + \delta\log r+ \mathfrak{X}^+_{\sigma}(r)).$
\item $m_{g^{(l)}/g, \sigma}(r) \le_{\rm exc} {l\over 2} \left((1+\delta)^2 \log T_g(r)  + \delta\log r+ \mathfrak{X}^+_{\sigma}(r)\right).$
\end{enumerate}
\end{lemma}
\begin{proof}  (i) We follow the the approach in \cite[Theorem 3.8, pp. 26-28]{PS}. Consider the auxiliary form
$$\Omega:= {1\over |w|^2(1+\log^2 |w|)}{\sqrt{-1}\over 2\pi} dw\wedge d\bar{w},$$
introduced in \cite[(15), p. 27]{PS}, which has finite volume on ${\Bbb C}$.
By the change of variable formula, we have 
 $$\int_1^r{dt\over t} \int_{B_t^{\sigma}} g^*\Omega= \int_{w\in {\Bbb C}}N_{g, \sigma}(w, r)\Omega.$$
 Since $B_t^{\sigma}\subset \mathbb{D}_t$ for any $t\ge 1$, and by the classical First Main Theorem, 
$$N_{g, \sigma}(w, r)\leq N_g(w, r)  \leq T_g (r) + O(1), ~~~\mbox{for}~r>1.$$
Thus 
$$\int_1^r{dt\over t} \int_{B_t^{\sigma}} g^*\Omega= \int_{w\in {\Bbb C}}N_{g, \sigma}(w, r)\Omega \leq \int_{\Bbb C} T_g(r)\Omega+\int_{\Bbb C} C\Omega.$$
 Consequently, we obtain
$$\int_1^r{dt\over t} \int_{B_t^{\sigma}} g^*\Omega\leq C_0 T_g(r)$$
for some constant $C_0 >0$.  The rest of the proof follows from \cite[Theorem 3.8, p. 28] {PS}.
This finishes the proof of (i).

 To prove (ii),  notice that
$${g^{(l)} \over g} ={g^{(l)} \over g^{(l-1)}}{g^{(l-1)} \over g^{(l-2)}}\cdots {g' \over g}.$$
Therefore, by (i), we obtain
$$m_{g^{(l)}/g, \sigma}(r) \le \sum_{j=1}^l m_{g^{(j)}/g^{(j-1)}, \sigma}(r) \le_{\rm exc} {1\over 2}\left((1+\delta)^2 \sum_{j=1}^l \log T_{g^{(j)}}(r)  + \delta l \log r+ l\mathfrak{X}^+_{\sigma}(r)\right).$$
By the classical First Main Theorem, 
$$
T_{g^{(j)}}(r)= m_{g^{(j)}}(r)+ N_{g^{(j)}}(r) \le m_{g^{(j)}/g^{(j-1)}}(r)+  m_{g^{(j-1)}}(r)+ 2N_{g^{(j-1)}}(r)
\leq_{\rm exc} 3T _{g^{(j-1)}}(r)+O(1).$$
Hence 
$$T_{g^{(j)}}(r)\leq_{\rm exc}  3^j T_g(r) +O(1).$$
Therefore, we get
$$m_{g^{(l)}/g, \sigma}(r) \le_{\rm exc} {l\over 2} \left((1+\delta)^2 \log T_g(r)  + \delta\log r+ \mathfrak{X}^+_{\sigma}(r)\right).$$
\end{proof}
In particular, by the definition,  the above lemma gives the following: 
\begin{lemma}\label{unit} If $u$ is an entire function such that it has no zeros on $\mathcal Y$ $($i.e. $u$ is a unit on $\mathcal Y$$)$, then, for any $\delta> 0$, 
$$T^{M}_{u'/u, \sigma}(r) \le_{\rm exc} {1\over 2}((1+\delta)^2 \log T_u(r)  + \delta \log r+ \mathfrak{X}^+_{\sigma}(r)).$$
\end{lemma}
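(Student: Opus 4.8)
The plan is to derive Lemma~\ref{unit} as a direct specialization of Lemma~\ref{log} (the Parabolic Logarithmic Derivative Lemma) applied to $u$. First I would observe that since $u$ is a nowhere vanishing holomorphic function on $\mathcal Y$, its logarithmic derivative $u'/u$ is a holomorphic function on $\mathcal Y$ (the zeros of $u'$ do not matter, only the poles, and $u'/u$ has no poles because $u$ has no zeros or poles). Consequently $u'/u$ has empty polar divisor, so by the Parabolic First Main Theorem $N_{u'/u,\mathcal Y}(\infty,r)=O(1)$, and hence
\begin{equation*}
T_{u'/u,\mathcal Y}(r)=m_{u'/u,\mathcal Y}(\infty,r)+N_{u'/u,\mathcal Y}(\infty,r)+O(1)=m_{u'/u,\mathcal Y}(r)+O(1),
\end{equation*}
where $m_{u'/u,\mathcal Y}(r)$ denotes the proximity function $m_{u'/u,\mathcal Y}(\infty,r)$ at the point $\infty$, i.e.\ the quantity appearing in Lemma~\ref{log}.

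Next I would simply invoke Lemma~\ref{log} with $f=u$: for any $\delta>0$,
\begin{equation*}
m_{u'/u,\mathcal Y}(r)\le_{\rm exc}\tfrac12\bigl((1+\delta)^2\log T_{u,\mathcal Y}(r)+\delta\log r+\mathfrak{X}^+_{\sigma}(r)\bigr)+O(1).
\end{equation*}
Combining this with the identity $T_{u'/u,\mathcal Y}(r)=m_{u'/u,\mathcal Y}(r)+O(1)$ from the previous paragraph yields exactly the asserted bound
\begin{equation*}
T_{u'/u,\mathcal Y}(r)\le_{\rm exc}\tfrac12\bigl((1+\delta)^2\log T_{u,\mathcal Y}(r)+\delta\log r+\mathfrak{X}^+_{\sigma}(r)\bigr)+O(1),
\end{equation*}
absorbing the additional $O(1)$ term.

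There is essentially no obstacle here: the content is entirely in Lemma~\ref{log}, and the only point requiring a word of justification is why $T_{u'/u,\mathcal Y}(r)$ can be replaced by $m_{u'/u,\mathcal Y}(r)$, which is the standard observation that a holomorphic (pole-free) function has vanishing counting function for poles, so that its characteristic function equals its proximity function up to a bounded term. One should be mindful that $u'/u$ is defined on $\mathcal Y$ and holomorphic there precisely because $u$ is a \emph{unit}; this is where the hypothesis is used. If desired, one could also phrase the conclusion as a statement about $u'/u$ being, up to $O(1)$, a function whose growth is logarithmically controlled by that of $u$ together with the weighted Euler-characteristic term $\mathfrak{X}^+_{\sigma}(r)$, which in the applications will be estimated via Proposition~\ref{exhuastion}(ii).
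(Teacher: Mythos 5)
Your proof is correct and matches the argument the paper leaves implicit (the paper only says ``In particular, it gives the following'' after Lemma~\ref{log}): the whole content is that $u$ being a unit forces $u'/u$ to be pole-free on $\mathcal Y$, so $N_{u'/u,\mathcal Y}(\infty,r)=0$ and hence $T_{u'/u,\mathcal Y}(r)=m_{u'/u,\mathcal Y}(r)+O(1)$ by the Parabolic First Main Theorem, after which Lemma~\ref{log} applied to $f=u$ gives the bound directly.
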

 
 Using the parabolic Logarithmic Derivative Lemma and the parabolic Jensen's formula, we can easily derive the following parabolic version of H. Cartan's theorem.  
\begin{theorem}\label{cartan} Let  ${\mathcal Y}:={\Bbb C}\backslash  \mathcal E$ with the  exhaustion function $\sigma$ constructed above.   Let  $f= ( f_0, \dots, f_n): {\Bbb C}  \rightarrow {\mathbb P}^n({\mathbb C})$ be a 
	holomorphic map whose image is not contained in any proper linear subspace  of ${\mathbb P}^n({\mathbb C})$, where $f_0, \dots, f_n$ are 
	entire functions  without common zeros.  Let $H_1,\dots,H_q$ be  hyperplanes on $\mathbb{P}^n(\mathbb{C})$.
	 Then, for $\delta >0$, 
	\begin{align*}
	&\int_{S_r^{\sigma}} \max_{ J}\sum_{j\in J} \lambda_{H_j}(f)  d\mu_r+ N_{W, \sigma}(0, r)\\
	&\leq_{\rm exc}
	(n+1)T_{f, \sigma}(r) + O(\log T_f(r))+  \frac{n(n+1)}2(\mathfrak{X}^+_{\sigma}(r)+\delta \log r) +c_{\frac1{W}, \sigma}(r),
	\end{align*}
	where  $W :=W( f_0, \dots, f_n)$ is the Wronskian of $f$ and  the maximum is taken over all subsets $J\subset \{1, 2, \dots, q\}$ such that  $H_j, j\in J$,  are in general position. 	
	\end{theorem}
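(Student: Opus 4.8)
The plan is to adapt the classical Wronskian proof of H.\ Cartan's Second Main Theorem to the parabolic setting, replacing the Euclidean logarithmic derivative lemma by its parabolic counterpart (Lemma~\ref{log}) and the Euclidean Jensen formula by the parabolic Poincar\'e--Lelong/Jensen formula on $\mathcal{Y}$. First I would reduce the inner maximum to subsets $J$ of cardinality exactly $n+1$: if $\{H_k : k\in J\}$ is in general position with $|J|>n+1$, then since any $n+1$ of these hyperplanes have empty common intersection, at each point $z$ at most $n$ of the values $\lambda_{H_k}(f)(z)$ can be large, so up to an $O(1)$ term the sum over $J$ is dominated by the sum over a suitable $(n+1)$-element sub-collection; and if $q<n+1$ one simply completes to $n+1$ hyperplanes in general position, the added Weil functions being bounded below. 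In the end there are only finitely many relevant $(n+1)$-subsets $J$ in general position.

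Fix such a $J=\{j_0,\dots,j_n\}$ with linear forms $L_{j_0},\dots,L_{j_n}$. Linear non-degeneracy of $f$ forces the Wronskian $W=W(f_0,\dots,f_n)$ (computed with the fixed vector field $\partial/\partial z$; a holomorphic function on $\mathcal{Y}$ since the $f_i$ are holomorphic) to be $\not\equiv 0$, and the change-of-basis relation $W(L_{j_0}(f),\dots,L_{j_n}(f))=c_J\,W(f_0,\dots,f_n)$ with $c_J\neq 0$ gives the pointwise identity
\begin{equation*}
\sum_{k\in J}\lambda_{H_k}(f)=(n+1)\log\|f\|-\log|W(f)|+\log\frac{|W(L_{j_0}(f),\dots,L_{j_n}(f))|}{|c_J|\cdot|L_{j_0}(f)\cdots L_{j_n}(f)|}.
\end{equation*}
The last term is the logarithm of a determinant whose $(i,k)$ entry is the normalized derivative $g_k^{(i)}/g_k$ with $g_k=L_{j_k}(f)$, hence a universal polynomial in logarithmic derivatives of the $L_{j_k}(f)$ of orders $0,1,\dots,n$.

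Integrating over $S(r)$: by the parabolic First Main Theorem $\int_{S(r)}\log\|f\|\,d\mu_r=T_{f,\mathcal{Y}}(r)+O(1)$, and by the parabolic Jensen formula (Poincar\'e--Lelong), since $W(f)$ is holomorphic on $\mathcal{Y}$, $-\int_{S(r)}\log|W(f)|\,d\mu_r=-N_{W,\mathcal{Y}}(0,r)+O(1)$ (no $\mathfrak{X}^+_\sigma$ is produced here). For the determinant term, take $\log^+$, reduce $\max_J$ over the finitely many $J$ to a sum, expand into monomials in the $g_k^{(i)}/g_k$, and apply Lemma~\ref{log} iteratively; summing the $\mathfrak{X}^+_\sigma(r)$-contributions over the derivative orders $1,\dots,n$ occurring in the Wronskian, together with $T_{L_{j_k}(f),\mathcal{Y}}(r)\le T_{f,\mathcal{Y}}(r)+O(1)$, yields a bound of the form $\bigl(\tfrac{n(n+1)}{2}+O(\delta)\bigr)\bigl(\mathfrak{X}^+_\sigma(r)+\delta\log r\bigr)+O(\log T_{f,\mathcal{Y}}(r))+O(1)$ off a set of finite measure; the $O(\delta)$ factor and the $\log T_{f,\mathcal{Y}}$ remainder get absorbed into the $2\epsilon\bigl(\mathfrak{X}^+_\sigma(r)+\delta\log r\bigr)$ and $\epsilon T_{f,\mathcal{Y}}(r)$ slack (using $\log T_{f,\mathcal{Y}}(r)\le_{\rm exc}\epsilon T_{f,\mathcal{Y}}(r)$). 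Evaluating the pointwise identity at the maximizing $J$ and collecting terms gives the stated inequality.

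The main obstacle I anticipate is the precise bookkeeping of the $\mathfrak{X}^+_\sigma(r)$-contributions so that the coefficient comes out exactly $\tfrac{n(n+1)}{2}+2\epsilon$: one must track how each determinant monomial distributes its total derivative-weight $1+2+\cdots+n=\tfrac{n(n+1)}{2}$ across iterated applications of the parabolic logarithmic derivative lemma, verify the dependence of the intermediate characteristic functions $T_{f^{(j)},\mathcal{Y}}(r)$ on $\mathfrak{X}^+_\sigma(r)$ only affects the lower-order $\log T$ terms, and check that the $(1+\delta)^2$ factors and the union of exceptional sets over the finitely many $J$ all fit inside the $\epsilon$-terms. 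A secondary point requiring care is the exact normalization of the parabolic First Main Theorem and Jensen formula, to be sure that the $\log\|f\|$ and $\log|W(f)|$ integrals contribute no stray multiple of $\mathfrak{X}^+_\sigma(r)$.
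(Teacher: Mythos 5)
Your overall strategy (reduce to $(n+1)$-element subsets $J$ in general position, the Wronskian identity $W(L_{j_0}(f),\dots,L_{j_n}(f))=c_J\,W(f)$, parabolic Jensen for $\log|W(f)|$ and the parabolic First Main Theorem for $\log\|f\|$, then Lemma~\ref{log} for the logarithmic-derivative determinant) is indeed the expected Cartan-type argument; note, however, that the paper itself does not prove this statement at all --- it is quoted from Theorem~6.3 of \cite{HR} --- so the only thing to assess is whether your sketch actually closes the argument, and it does not.

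The genuine gap is exactly the point you flag and then wave through: the coefficient $\frac{n(n+1)}{2}+2\epsilon$ in front of $\mathfrak{X}^+_\sigma(r)+\delta\log r$. In the parabolic setting this coefficient is not a negligible error term (here $\mathfrak{X}^+_\sigma(r)$ can be comparable to $T_{f,\mathcal Y}(r)$, and the later applications in this paper depend on the coefficient being independent of $q$), so the bookkeeping is the theorem, not a detail. Your scheme --- expand the determinant, iterate Lemma~\ref{log} along derivative orders $1,\dots,n$, and replace the maximum over the finitely many $J$ (and over permutations, or column indices) by a sum before integrating --- does not give $\frac{n(n+1)}{2}$. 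A single monomial indeed costs $\frac{1}{2}\sum_{i=1}^n i=\frac{n(n+1)}{4}$ copies of $\mathfrak{X}^+_\sigma(r)$, but $\int_{S(r)}\max(\cdots)\,d\mu_r\le\sum\int_{S(r)}(\cdots)\,d\mu_r$ inflates this: already for one fixed $J$, bounding the determinant by row-wise maxima and summing over the $n+1$ columns gives at least $\frac{n(n+1)^2}{4}$ (e.g.\ $4.5$ instead of $3$ for $n=2$), and summing over the admissible $J$ multiplies further by a quantity of size up to $\binom{q}{n+1}$, producing a $q$-dependent coefficient, whereas the stated bound is uniform in $q$. So an additional idea is required --- for instance, rewriting all the $J$-dependent Wronskian quotients in terms of logarithmic derivatives of a fixed family of functions (using $W(L_{j_0}(f),\dots,L_{j_n}(f))=c_JW(f)$ and Wronskian reduction to ratios $f_i/f_0$), or a genuine Wronskian-type logarithmic derivative lemma on parabolic surfaces with total cost $\frac{n(n+1)}{2}$ --- and your proposal neither supplies such a device nor verifies that the intermediate $T_{g^{(j)},\mathcal Y}$-terms stay at the $\log T$ level. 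As written, the argument proves an inequality with a larger, $q$-dependent coefficient, which is strictly weaker than the quoted theorem and would not suffice for the way it is used later (e.g.\ in Theorem~\ref{movingsmt0} and Theorem~\ref{trunborel}).
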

\begin{proof} 
We follow the  proof in \cite[Theorem A.5.1.3]{ru2021nevanlinna}. 
 Let $H_j=\{{\bf w}\in {\Bbb P}^n({\Bbb C})~|~<{{\bf w}, \bf a}_j>=0\}$ for $j=1, \dots, q$.
Let $T$ be the set of all 
injective maps $\mu: \{0,1,\dots,n\} \rightarrow \{1,\dots,q\}$ such that 
${\bf a}_{\mu(0)}, \dots, {\bf a}_{\mu(n)}$ are linearly independent.
Then 
\begin{eqnarray}\label{512}
\int_{S_r^{\sigma}}\max_{J}\sum_{j \in J}
\lambda_{H_j}(f) d\mu_r &\leq & \int_{S_r^{\sigma}}\max_{\mu \in T}\sum_{j=0}^n \log~\left({\|{\bf f}\|\|{\bf a}_{\mu(j)}\|\over |<{\bf f}, {\bf a}>|}\right) d\mu_r  \nonumber\\ 
 &\leq &  \int_{S_r^{\sigma}} \max_{\mu \in T} \log^+{|W(<{\bf f}, {\bf a}_{\mu(0)}>, \dots, <{\bf f}, {\bf a}_{\mu(n)}>)|\over 
\prod_{j=0}^n|<{\bf f},  {\bf a}_{\mu(j)}>|}  d\mu_r  \nonumber\\
&+& (n+1) T_{f, \sigma}(r) + 
\int_{S_r^{\sigma}}
\log (1/| W( f_0, \dots, f_n)|)  d\mu_r + O(1).
\end{eqnarray}
By applying the logarithmic derivative lemma (Lemma \ref{DL}) and the parabolic Jensen formula (Lemma \ref{jensenh}),  the theorem follows.
\end{proof}

The following result can be derived from the proof of \cite[Theorem 2.1]{RuWang2003} and Lemma \ref{DL}.
\begin{theorem}\label{trunborel}  Let $f_0,\dots,f_n, f_{n+1}$ be  entire functions without common  zeros.  Let  ${\mathcal Y}:={\Bbb C}\backslash  \mathcal E$ with the  exhaustion function $\sigma$ constructed above.  
	Assume that $f_0+\dots+ f_n+ f_{n+1}=0.$ 
	If $\sum_{i\in I} f_i\ne 0$ for any proper subset $I\subset\{0,\dots,n+1\}$, then we have 
	\begin{equation*}
	  T^{M}_{f_i/f_j, \sigma}(r)   \leq_{\rm exc}  \sum_{l=0}^{n+1} N_{f_l,\sigma}^{(n )}(0,r)+\frac{n(n+1)}2  (\mathfrak{X}^+_{\sigma}(r)+  \log r) +c_{\frac1{D_0},\sigma}(r)+ \sum_{\substack{l=0 \\ l\neq j}}^{n+1} c_{f_l, \sigma}(r)+O\left(\max_{1\leq l\leq n}\log T_{\frac{f_l}{f_0}} (r)\right),
 	\end{equation*}
where   $D_0$ is an entire function appeared in the proof of \cite[Theorem 2.1]{RuWang2003} or \cite[Theorem A5.2.6]{ru2021nevanlinna}), which is determined in terms of $f_0, \dots, f_n$ only.
\end{theorem}  
\begin{proof}  
We first sketch the proof by assuming that $f_0, \dots, f_n$ are linearly independent.
By applying  Theorem \ref{cartan} to  $f=(f_0, \dots, f_{n+1})$, we have
$$\sum_{j=1}^{n+2}  m_{f, \sigma}(H_j, r)+N_{W, \sigma}(r) \le (n+1)T_{f, \sigma}(r) + \frac{n(n+1)}2  (\mathfrak{X}^+_{\sigma}(r)+\delta \log r) +c_{\frac1{W},\sigma}(r)+ O(\log T_{f}(r)),
$$
where $H_i=\{x_{i-1}=0\}, i=1, \dots, n+1$ and $H_{n+2}=\{x_0+\cdots +x_n=0\}$.
By Theorem \ref{FMT},    for $1 \leq j\leq n+2$, 
$$T_{f, \sigma}(r) = m_{f, \sigma}(H_j, r)+ N_{f, \sigma}(H_j, r)+ c_{f_{j-1}, \sigma}(r).$$
Thus we get,
$$T_{f, \sigma}(r)  \leq   \sum_{j=0}^{n+1}  N_{f_j, \sigma}(0, r)- N_{W, \sigma}(r)+\frac{n(n+1)}2  (\mathfrak{X}^+_{\sigma}(r)+\delta \log r) +c_{\frac1{W},\sigma}(r)+ \sum_{j=0}^{n+1} c_{f_j, \sigma}(r)+ O(\log T_{f}(r)).$$
It is standard to verify that 
$$
\sum_{j=0}^{n+1}  N_{f_j, \sigma}(0, r)- N_{W, \sigma}(r)\le \sum_{j=0}^{n+1}  N^{(n)}_{f_j, \sigma}(0, r).
$$
The proof can be completed by applying the following estimate, valid for any for any $0\le i\le n$ and $0\le j\le n+1$:
\begin{align}
T_{f, \sigma}(r) &=\int_{S_r^{\sigma}} \log \max_{0\leq l\leq n+1} |f_l(z)| d\mu_r= \int_{S_r^{\sigma}} \log \max_{0\leq l\leq n+1} (|f_l(z)|/|f_j(z)|)d\mu_r+\int_{S_r^{\sigma}} \log |f_j(z)| d\mu_r\nonumber \\
&\ge \int_{S_r^{\sigma}} \log^+ \left |\frac{f_i(z)}{f_j(z)}\right| d\mu_r +N_{f_j, \sigma}(0, r) +  c_{f_j,\sigma}(r)\nonumber \\
&\ge m_{\frac{f_i }{f_j },\sigma}(r)+N_{\frac{f_i }{f_j }, \sigma}(\infty, r)+ c_{f_j,\sigma}(r)=T^{M}_{f_i/f_j, \sigma}(r)+  c_{f_j,\sigma}(r).
\end{align}
  Thus 
$$  T^{M}_{f_i/f_j,\sigma}(r)   \leq  \sum_{j=0}^{n+1}  N^{(n)}_{f_j, \sigma}(0, r)+ \frac{n(n+1)}2  (\mathfrak{X}^+_{\sigma}(r)+\delta \log r) +c_{\frac1{W},\sigma}(r)+ \sum_{\substack{l=0 \\ l\neq j}}^{n+1} c_{f_l, \sigma}(r)+ O\left(\max_{1\leq l\leq n}\log T_{\frac{f_l}{f_0}} (r)\right).$$

 Passing from the condition that  $f_0, \dots, f_n$ are linearly independent to the situation in which no proper subsum of $f_0+\dots+f_{n+1}$ vanishes involves only algebraic arguments. In particular, the resulting statements are obtained from the present ones by changing $W$ to $D_0$, the modified Wronskian that appears in the proof of \cite[Theorem 2.1]{RuWang2003}.
\end{proof}


\subsection{The moving GCD theorem in the parabolic setting}
One of the key components in \cite{GW22} is the extension of the GCD theorem in \cite{levin2019greatest} to the case where the coefficients of two coprime polynomials are allowed to be meromorphic functions of the same growth. In this subsection, we aim to extend this line of argument to the parabolic setting. Before we proceed, let us briefly outline the development of the GCD theorem.

  The foundational arguments in \cite{levin2019greatest} are based on results initially established in \cite{levin_gcd} for the number field case. In \cite{WaYa}, the GCD results concerning  $\mathbb{G}_m^n$ from \cite{levin_gcd} were generalized to more general varieties by combining the approach in \cite{Silverman2005} with Diophantine approximation results for arbitrary varieties introduced by the first named author and Vojta \cite{ruvojta}. More recently, a further generalization was achieved in \cite{HL}, which applied Vojta's results on integral points on semiabelian varieties for the number field case, as well as \cite{NWY02} for the complex case.

  To proceed, we will first derive a version of the second main theorem with moving targets of the same growth, which will serve our purpose. See \cite{RuStoll1991} or \cite[Chapter 6]{ru2021nevanlinna}.

  Let $a_{0}, \hdots,a_{n}$ be meromorphic functions, not all zeros, on ${\Bbb C}$, and let $L:=a_{0} X_0+\dots+a_{n} X_n$.  
Then  $L$ defines a  hyperplane $H$ in $\PP^n $, over the field  generated by  $a_{0}, \hdots,a_{n}$ over $\mathbb C$.
We note that  $H(z)$ is the hyperplane determined by the linear form $L(z)$. The definition of the Weil function, proximity function and counting function can be easily extended to moving hyperplanes.  
For example,
\begin{align}\label{Weil}
	\lambda_{H(z)}(P)=-\log \frac{| a_{0} (z)x_0+\cdots+ a_{n} (z)x_n|}{\max\{|x_0|,\dots ,|x_n|\}\max\{| a_{0} (z)|,\dots ,| a_{n} (z)|\}},
\end{align}
for $z\in\mathcal Y$ which is not a common zero of $a_{0}, \cdots,a_{n}$, or a pole of any $a_{k}$, $0\le k\le n$.
   We now consider a family of linear forms $L_j:=\sum_{i=0}^n a_{ij}X_j$, $1\le j\le q$, as the aformentioned linear forms.   Without loss of generality, we normalize the linear forms $L_j$, $1\le j\le q$,  such that for each $1\le j\le q$, there exists $0\le j'\le n$ with   $a_{jj'}=1$.    Let $t$ be a positive integer and let $V(t)$ be the complex vector space spanned by the elements
\begin{align*}
	\left\{ \prod a_{jk}^{n_{jk}} : n_{jk}\ge 0,\,\sum n_{jk}=t \right\},
\end{align*}
where the products and sums run over $1\le j\le q$ and $0\le k\le n$.
Let $1=b_1, b_2, \cdots,b_u$ be a basis of $V(t)$ and $b_1,\cdots,b_w$ a basis of $V(t+1)$.  It's clear that $u\le w$.
Moreover, we have 
\begin{align}\label{vlimit}
	\liminf_{t\to\infty}\dim V(t+1)/\dim V(t)=1. 
\end{align}
  See \cite[Lemma 6]{WaRoth} for a proof.

\begin{definition}\label{degenerateV}
	Let $V$ be a $\mathbb C$-vector space spanned by finitely many meromorphic functions.  We say that a holomorphic map $f=(f_0,\dots,f_n): {\Bbb C}\to\mathbb P^n({\Bbb C})$ is linearly   independent  over $V$ if whenever we have a linear combination 
$  \sum_{i=0}^{n}a_{i}f_{i}=0$ with $a_{i}\in V$, then $a_{i}=0$ for each $i$.	Otherwise we say
	that $f$ is  linearly dependent  over $V$. 
\end{definition}

The following formulation of the second main theorem with moving targets  follows from the strategy in \cite{Wang2000}  and the proof of  \cite[Theorem A6.2.4]{{ru2021nevanlinna}}, by adding the Wronskian term and computing the error term explicitly when applying the second main theorem (Theorem \ref{cartan}).	 
\begin{theorem}
	\label{movingsmt0}
	Let $f=(f_0,\hdots,f_n): {\Bbb C}\to\mathbb{P}^n(\mathbb{C})$ be a holomorphic map  where $f_0,f_1,\hdots,f_n$ are  entire functions without common zeros.  Let $H_j$, $1\le j\le q$, be arbitrary (moving) hyperplanes given by $L_j:=a_{j0} X_0+\dots+a_{jn} X_n$ where $a_{j0}, \cdots,a_{jn}$ are   meromorphic functions, not all zeros on ${\Bbb C}$.  Let $t$ be a positive integer,    $1=b_1, b_2, \cdots,b_u$ be a basis of $V(t)$ and $b_1,\cdots,b_w$ a basis of $V(t+1)$. Denote by $W$ the Wronskian of $\{hb_mf_k\,|\, 1\le m\le w,\,  0\le k\le n\}$, where $h$ is a meromorphic function such that $hb_1,\hdots, hb_w$ are entire functions without common zeros. Let  $\mathcal Y\subset {\Bbb C}$ be as above.  If $f$ is linearly non-degenerate over $ V(t+1) $, then, we have the following inequality
	\begin{align}\label{MSMTpre} \int_{S^{\sigma}_r}\max_J \sum_{k\in J}&\lambda_{H_k(z)}(f(z))d\mu_r+\frac 1uN_{W, \sigma}(0,r)\cr
		&\leq_{\operatorname{exc}}   \left(\frac wu (n+1)\right)T_{f, \sigma}(r)+\frac wu (n+1)(t+2)  \sum_{j=1}^q T_{L_{j}, \sigma}(r) \cr
		 &+\frac w{2u} (n+1)(nw+w-1)(\mathfrak{X}^+_{\sigma}(r)+\log r) + \frac 1u c_{{1\over W}, \sigma}(r)+  O(\log T_f(r)), 
	\end{align}  where  $ T_{L_{j}, \sigma}(r) =  \sum_{i=0}^n T^M_{a_{ji},\sigma}(r)$,   $w=\dim V(t+1)$, $u=\dim V(t)$, and the maximum is taken over all subsets $J$ of $\{1,\dots, q\}$ such that $H_j(z)$, $j\in J$, are in general position. 
\end{theorem}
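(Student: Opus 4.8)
The plan is to derive Theorem~\ref{movingsmt0} from the non-moving parabolic Cartan theorem (Theorem~\ref{cartan}) via the standard ``Vojta-style'' reduction that replaces moving hyperplanes by fixed ones in a higher-dimensional projective space, following the scheme of \cite{Wang2000} and \cite[Theorem~A6.2.4]{ru2021nevanlinna}. First I would set up the auxiliary map: given the basis $1=b_1,\dots,b_w$ of $V(t+1)$, consider the map $F:=(h b_m f_k)_{1\le m\le w,\,0\le k\le n}: \mathcal Y\to\mathbb P^{w(n+1)-1}(\mathbb C)$, where $h$ is chosen so that $hb_1,\dots,hb_w$ are entire without common zeros; its coordinates are holomorphic and, since $\mathbf f$ is linearly non-degenerate over $V(t+1)$, one checks $F$ is linearly non-degenerate in $\mathbb P^{w(n+1)-1}$ (a short linear-algebra argument using that the $b_m$ are a basis). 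A basic estimate, via the First Main Theorem \eqref{fmtmov}, gives $T_{F,\mathcal Y}(r)=(n+1)\,T_{\mathbf f,\mathcal Y}(r)+O\!\big((t+1)\max_j T_{L_j,\mathcal Y}(r)\big)+O(1)$, and similarly the Wronskian $W$ of the $hb_mf_k$ equals (up to a nowhere-vanishing factor and a controlled correction) the Wronskian appearing in Cartan's theorem applied to $F$.

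Next I would encode each moving hyperplane $H_j$ as a \emph{fixed} hyperplane in $\mathbb P^{w(n+1)-1}$ evaluated along $F$, but at the cost of passing from $V(t)$ to $V(t+1)$: multiplying the linear form $L_j=\sum_k a_{jk}X_k$ by each basis element $b_m$ of $V(t)$ and expanding $b_m a_{jk}\in V(t+1)$ in the basis $b_1,\dots,b_w$ produces, for each $j$ and each $m\le u$, a fixed linear form $\widetilde L_{j,m}$ in the coordinates of $F$ with $\lambda_{\widetilde L_{j,m}}(F)$ comparable to $\lambda_{H_j}(\mathbf f)$ up to $O(\max_j T_{L_j,\mathcal Y}(r))$. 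This is the point where the ratio $w/u$ enters: we obtain $u$ fixed hyperplanes per moving hyperplane, and the general-position hypothesis on the $H_j(z)$ translates into a general-position statement for the corresponding blocks of fixed hyperplanes (this requires the usual genericity/independence bookkeeping). Applying Theorem~\ref{cartan} to $F$ with these fixed hyperplanes, then dividing through by $u$ and translating $T_{F,\mathcal Y}$ and $N_W$ back in terms of $\mathbf f$, yields \eqref{MSMTpre}, with the constants $\tfrac{w}{u}(n+1)$, the $(t+2)$-factor in the moving-target error term (one power of $t+1$ from $V(t+1)$ plus the extra slack from the $\widetilde L_{j,m}$ coefficients), and $\tfrac12(n+1)^2\tfrac{w^2}{u}$ in front of $\mathfrak X^+_\sigma(r)+\log r$ (the $w(n+1)$ from the dimension of the ambient space enters quadratically in Cartan's error term, giving $\tfrac12(w(n+1))^2/u = \tfrac12(n+1)^2 w^2/u$ after the division by $u$).

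The main obstacle, and the step deserving the most care, is the \emph{general position} translation: showing that the fixed hyperplanes $\widetilde L_{j,m}$ (as $j$ runs over an admissible subset $J$ and $m$ over $1,\dots,u$) are in sufficiently general position in $\mathbb P^{w(n+1)-1}$ so that the max-sum in Cartan's theorem dominates $\sum_{k\in J}\lambda_{H_k(z)}(\mathbf f(z))$ after dividing by $u$. This is the heart of the Wang/Ru moving-target argument and is where one must invoke the linear non-degeneracy of $\mathbf f$ over $V(t+1)$ together with the structure of $V(t)$ and $V(t+1)$; it is essentially linear algebra over the field $\mathbb C(a_{jk})$ but is delicate to make uniform. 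A secondary technical point is bookkeeping the error terms so that only $\mathfrak X^+_\sigma(r)+\log r$ (not $\log T_{\mathbf f,\mathcal Y}$) survives, which is handled by absorbing the logarithmic-derivative-type contributions using Lemma~\ref{log} and Lemma~\ref{unit}; since the moving coefficients are genuine meromorphic functions (not necessarily slowly varying in an absolute sense, but of controlled growth via $T_{L_j,\mathcal Y}$), their contributions are kept as the explicit $\max_j T_{L_j,\mathcal Y}(r)$ term rather than absorbed into $o(T_{\mathbf f,\mathcal Y}(r))$. Finally I would note that the whole argument is parallel to the classical case, the only genuinely new ingredient being that Theorem~\ref{cartan} already carries the parabolic error term $\mathfrak X^+_\sigma(r)$, so no new analysis on $\mathcal Y$ is needed beyond propagating that term through the reduction.
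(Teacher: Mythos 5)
Your overall strategy is the paper's intended one (the paper itself only sketches the proof, citing \cite{Wang2000} and \cite[Theorem A6.2.4]{ru2021nevanlinna}): form the auxiliary map $F=(hb_mf_k)_{1\le m\le w,\,0\le k\le n}$ into $\mathbb P^{w(n+1)-1}$, turn each moving form $L_j$ into the $u$ fixed forms obtained by expanding $b_mL_j$ ($1\le m\le u$) in the basis of $V(t+1)$, apply the parabolic Cartan theorem (Theorem \ref{cartan}) to $F$ with these fixed hyperplanes, divide by $u$, and propagate the error term $\mathfrak{X}^+_{\sigma}(r)+\log r$. The Wronskian and general-position points you flag are exactly the bookkeeping of the classical Steinmetz/Ru--Wang argument, and the Wronskian of the $hb_mf_k$ is literally the $W$ of Theorem \ref{cartan} applied to the reduced representation of $F$, no correction needed.

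There is, however, a concrete error in the one quantitative step that produces the constants: the claimed estimate $T_{F,\mathcal Y}(r)=(n+1)T_{\mathbf f,\mathcal Y}(r)+O\bigl((t+1)\max_jT_{L_j,\mathcal Y}(r)\bigr)$ is false. Since $\max_{m,k}|hb_mf_k|=\bigl(\max_m|hb_m|\bigr)\bigl(\max_k|f_k|\bigr)$, the correct comparison is $T_{F,\mathcal Y}(r)=T_{\mathbf f,\mathcal Y}(r)+T_{[hb_1:\cdots:hb_w],\mathcal Y}(r)+O(1)$, with no factor $n+1$; the factor $n+1$ in \eqref{MSMTpre} comes solely from the ambient dimension, i.e.\ Theorem \ref{cartan} applied in $\mathbb P^{w(n+1)-1}$ has main coefficient $w(n+1)+\epsilon$, which after dividing by $u$ yields $\frac wu(n+1)$. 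If one feeds your height estimate into that computation, the main term becomes $\frac wu(n+1)^2\,T_{\mathbf f,\mathcal Y}(r)$, which is not \eqref{MSMTpre} and would ruin the downstream use (the proof of Theorem \ref{Mfundamental} needs the coefficient $\frac wu M$ of $T_{\Phi(\mathbf u),\mathcal Y}(r)$ to exceed $M$ only marginally). So as written your argument is internally inconsistent: either the height estimate or your accounting of where $\frac wu(n+1)$ comes from must be corrected. The fix is short: use the product formula above, bound $T_{[hb_1:\cdots:hb_w],\mathcal Y}(r)$ by $O\bigl((t+1)\max_jT_{L_j,\mathcal Y}(r)\bigr)$ (the $b_m$ are degree-$(t+1)$ monomials in the normalized coefficients $a_{jk}$), and absorb this, together with the Weil-function comparison between $\lambda_{H_j}(\mathbf f)$ and $\sum_{m\le u}\lambda_{\widetilde H_{j,m}}(F)$, into the $\frac wu(n+1)(t+2)\max_jT_{L_j,\mathcal Y}(r)$ term; the rest of your outline then goes through as in the classical case.
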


\begin{remark*}
Let $\epsilon$ be a sufficiently small positive number. 
By \eqref{vlimit}, we find $t$ such that $\frac wu\le 1+\epsilon$.  An explicit upper bound for $t$ can be found in the proof of \cite[Proposition 5.3]{CDMZ} in term of $\epsilon$ and the number of generator for $V(1)$, which is less than $qn$.  More precisely, if $\epsilon<\frac1{16}$, then we can find 
\begin{align}\label{boundt}
t\le \frac{3qn}{\epsilon}\log\frac1\epsilon
\end{align}
such that $\frac wu\le 1+\epsilon$. 
Furthermore,  for such $t$, we have
\begin{align}\label{dimension}
	 u:=\dim V(t)\le \left(\frac 1{\epsilon}\right)^{qn}, ~~~\mbox{and consequently}, ~~~~
	 w:=\dim V(t+1)\le (1+\epsilon)\left(\frac 1{\epsilon}\right)^{qn}. 
\end{align}
\end{remark*}

Using the above theorem, and through the same argument from the proofs of \cite[Theorem 5.7]{levin2019greatest},  \cite[Theorem 26]{GSW}, and   \cite[Theorem 4.4]{GW22}, we can get the 
following GCD theorem (note we now consider the case that  $u_1,\hdots,u_n$ are units on $\mathcal Y$).  We first fix some notation. Let $K$ be a subfield of the field of meromorphic functions on ${\Bbb C}$.
	Let $F,G $ be coprime  polynomials in $n$ variables of the same degree $d>0$  over $K$.  Assume that one of the coefficients in the expansions of $F$ and $G$ is equal to 1.
		For every positive integer $t$,
	we denote by $V_{F,G}(t)$ the (finite-dimensional) $\mathbb C$-vector
	space  spanned by $\prod_{\alpha}\alpha^{n_{\alpha}}$,
	where $\alpha$ runs through all non-zero coefficients of $F$ and $G$, $n_{\alpha}\ge0$ and $\sum n_{\alpha}=t$; we also put
	$d_{t}:=\dim V_{F,G}(t)$.  
	For every  integer  $m\ge d$, we let $M=M_{m,n,d}=2\binom {m+n-d}{n}- \binom {m+n-2d}{n}$.  
   Let
$Q =\sum_{{\mathbf i}\in I_Q } a_{\mathbf i}\cdot{\mathbf x}^{\mathbf i}\in K[x_1,\hdots,x_n]$, where ${\mathbf i}=(i_1,\hdots,i_n)$, ${\mathbf x}^{\mathbf i}=x_1^{i_1}\hdots x_n^{i_n}$, and $a_{\mathbf i}\ne 0$ if ${\mathbf i}\in I_Q $.  
We define  the parabolic characteristic function of $Q$ as 
\begin{align}\label{ParacharF}
	T_{Q, \sigma }(r):=\sum_{{\mathbf i}\in I_Q }T^M_{a_{\mathbf i},\sigma}(r).
\end{align}
	\begin{theorem}[GCD theorem on parabolic  Riemann surfaces evaluating at  units]\label{Mfundamental}
	Let $n\ge 2$ be a positive integer.  Let $K$ be a subfield of the field of meromorphic functions on ${\Bbb C}$. Let  $\mathcal Y\subset {\Bbb C}$ be as above. 
	Let ${\bf u}=(u_1,\hdots,u_n)$, where  $u_1,\hdots,u_n$ are meromorphic functions on ${\Bbb C}$ such that each  $u_i|_{\mathcal Y}, 1\leq i\leq n,$ is a unit (i.e.  without  zeros and poles on ${\mathcal Y}$).  
	Let $F,G $ be coprime  polynomials in $n$ variables of the same degree $d>0$  over $K$.  Assume that at least  one of the coefficients in  the expansions of $F$ and $G$ is equal to 1.    
	Let $m\ge d$   and $b$ be a positive  integer.  If the set $\{u_1^{i_1}\cdots u_n^{i_n}: i_1+\cdots+i_n\le m\}$ is linearly  independent over $V_{F,G}(Mb+1)$,  then we have 
 \begin{align*}
		 N_{\rm gcd, \sigma}(F({\bf u}),G({\bf u}),r) 
		&\le_{\rm exc}  
		\left(\frac{M'}M+\frac{w}{u}-1\right)mn \max_{1\le i\le n} T^M_{u_i, \sigma}(r)+ O( \max_{1\le i\le n}\log T_{u_i}(r)) \\
		&+   c' (T_{ F, \sigma}(r)+T_{G, \sigma}(r)) +   c'' (\mathfrak{X}^+_{\sigma}(r)+ \log r +c_{1/h, \sigma}(r) + c_{1/W, \sigma}(r)),
	\end{align*}
	where $h$ is meromorphic function such that $F({\bf u})/h$ and $G({\bf u})/h$ are entire and have no common zero, and $W$ is  the Wronskian which arises when applying  Theorem \ref{movingsmt0},   $w=d_{Mb} $, $u=d_{M(b-1)}$,  $c' =\frac{w}{u}M(b+3)
	+2,$ and  $c''  =M{w^2\over u}$, and $M'= \binom{m+n}{n}-M$.
		\end{theorem}

		We outline a proof here.
\begin{proof} 
 For a subset
$T\subset K[\mathbf{x}]$, where $\mathbf{x}:=(x_{1},\ldots,x_{n})$, we denote
\[
T_{m}=\{f\in T\,:\deg f\le m\}.
\]
As in the proof of  \cite[Theorem  4.3]{GW22}, we assume that the ideal $(F, G)$ is proper in $K [x]$. Let $(F, G)_m := K [x]_m \cap (F, G)$, where
we choose $\{\phi_1,\dots, \phi_M\}$ to be a basis of the $K-$vector space $(F,G)_m$ consisting of elements of the form $F{\bf x}^{\bf i}, G{\bf x}^{\bf i}$. 
Let $\Phi:=(\phi_1,\dots, \phi_M)$. For each 
$z\in {\mathcal Y}$, we can construct a basis $B_z$ of $V_m := K[{\bf x}]_m/(F,G)_m$  with monomial representatives ${\bf x}^{{\bf i}_1}, \dots, {\bf x}^{{\bf i}_{M'}}$
as in the proof of Theorem 5.7 in \cite{levin2019greatest}. Let $I_z:=\{{\bf i}_1, \dots, {\bf i}_{M'}\}$. For each   ${\bf i}\not\in I_z$ with $|{\bf i}|=m$,  we have 
$${\bf x}^{\bf i}+\sum_{{\bf j}\in I_z} c_{z,  {\bf j}} {\bf x}^{\bf j} \in (F, G)_m$$
for some choice $c_{z,  {\bf j}}\in K$. 
Then, for each such ${\bf i}$,  there is a linear form   $\tilde L_{z, {\bf i}}$  over $K$ 
such that 
 $$\tilde L_{z, {\bf i}}(\phi_1,\dots, \phi_M) = {\bf x}^{\bf i}+\sum_{{\bf j}\in I_z} c_{z,  {\bf j}} {\bf x}^{\bf j}.$$
We note that since there are correspondingly only finitely many choices of a monomial basis of $V_m$, there are
only finitely many choices of $c_{z,  {\bf j}}$, even as $z$ ranges over all of   ${\mathcal Y}$.  Following the explicit construction in the proof of  \cite[Theorem 4.3]{GW22},  we can find a nonzero element $c_z\in  V_{F,G}(M)$ such that the normalized linear form $L_{z, {\bf i}}:=c_z \tilde L_{z, {\bf i}}$ satisfies
$$
L_{z, {\bf i}}(\phi_1,\dots, \phi_M) = c_z\left({\bf x}^{\bf i}+\sum_{{\bf j}\in I_z} c_{z,  {\bf j}} {\bf x}^{\bf j}\right) 
$$
and
\begin{align}\label{Linearformheight}
T_{L_{z, {\bf i}}, \sigma}(r) \leq M (T_{ F, \sigma}(r)+T_{G, \sigma}(r)),
\end{align}
by   \cite[(4.12)]{GW22}.
Moreover, the explicit construction also demonstrates that there are only a finitely many choice for $c_z$, even  though $z$ runs through all of ${\mathcal Y}$.
 
 Following the steps in the   proof of  Theorem 4.3 in \cite{GW22}, we then obtain 
\begin{align}\label{weilfunction}
\log |L_{z, {\bf i}}(\Phi({\bf u}))|-\log \|L_{z, {\bf i}}\|_z \leq \log|{\bf u}(z)^{\bf i}|+ \log \|F\|_z+\log\|G\|_z+O(1).
\end{align}
 Since the functions $u_i$  are units on $\mathcal Y$, and that
\begin{align}\label{logunit} 
\log^+|{\bf u}(z)^{\bf i}|\le \sum_{j=1}^n i_j \log^+|u_j(z)|\le m\sum_{j=1}^n  \log^+|u_j(z)|,	
\end{align}
we have 
\begin{align}\label{charfuncbound}
T^M_{{\bf u}^{\bf i}, \sigma}(r)\le m\sum_{j=1}^n T^M_{u_j, \sigma}(r)\le  mn\max_{1\le i\le n} T^M_{u_i, \sigma}(r).
\end{align}
Consequently, by \eqref{weilfunction}, we obtain
\begin{equation}\label{LW1}
 \int_{S^{\sigma}_r} \sum_{ |{\bf i}| = m, {\bf i}\not\in I_z} (\log |L_{z, {\bf i}}(\Phi({\bf u})| - \log \|L_{z, {\bf i}}\|_z)
 d\mu_r \leq M'mn\max_{1\le i\le n} T^M_{u_i, \sigma}(r)+ M(T_{ F, \sigma}(r)+T_{G, \sigma}(r)) +O(1).
\end{equation}
 Note that the map $\phi_1({\bf u}), \dots, \phi_M({\bf u})$  may not be a reduced presentation
of $\Phi({\bf u})$.   Let $h$ be a meromorphic function such that $F({\bf u})/h$ and $G({\bf u})/h$ are entire and have no common zero.  Let $\psi_i:=\phi_i({\bf u})/h$ for $i=1, \dots, M$. 
Then $\Psi: =(\psi_1, \dots, \psi_M)$  is a reduced representation of $\Phi({\bf u})$. Then
\begin{align}\label{compareht}
 \int_{S^{\sigma}_r} \log \|\Phi({\bf u})\| d\mu_r =  \int_{S^{\sigma}_r} \log \|\Psi\|  d\mu_r + \int_{S^{\sigma}_r} \log |h|d\mu_r  
 = T_{\Psi, \sigma}(r) + \int_{S^{\sigma}_r} \log |h|d\mu_r.
\end{align}
On the other hand, since $u_i$ are units on $\mathcal Y$, the poles of $h$ only arises from the  poles of coefficients of $F$ and $G$.  More precisely,
\begin{align}\label{pole_h}
N_{h, \sigma}(\infty,r)\le   T_{ F, \sigma}(r)+T_{G, \sigma}(r) 
\end{align}
by applying the inequality of the first main theorem in Theorem \ref{FMT}.
It is also clear that this choice of $h$ gives
\begin{align}\label{zero_h}
N_{h, \sigma}(0,r)=N_{\rm gcd, \sigma}(F({\bf u}),G({\bf u}),r).
\end{align}  
By Lemma \ref{jensenh}, \eqref{pole_h} and \eqref{zero_h},  we have
\begin{align}\label{h} 
 \int_{S^{\sigma}_r} \log |h|d\mu_r
  &= N_{h, \sigma}(0,r)-N_{h, \sigma}(\infty,r)+ c_{h,\sigma}(r)\cr
   &\ge N_{\rm gcd, \sigma}(F({\bf u}),G({\bf u}),r)- \left(T_{ F, \sigma}(r)+T_{G, \sigma}(r)\right)+c_{h, \sigma}(r). 
 \end{align}
Then we can derive from  \eqref{compareht} that
\begin{equation}\label{LW2} 
 \int_{S^{\sigma}_r} \log \|\Phi({\bf u})\|d\mu_r \ge T_{\Psi, \sigma}(r) + N_{\rm gcd, \sigma}(F({\bf u}),G({\bf u}),r) +c_{h, \sigma}(r)- \left(T_{ F, \sigma}(r)+T_{G, \sigma}(r)\right).
\end{equation} 
 Since the set $\{u_1^{i_1}\cdots u_n^{i_n}: i_1+\cdots+i_n\le m\}$ is linearly independent  over $V_{F,G}(Mb+1)$,
 it follows that $\Phi({\bf u}): {\Bbb C}\rightarrow {\Bbb P}^{M-1}$ 
 is linearly   independent  over $V_{F,G}(Mb) = V (b)$.  We  note that, when computing Weil functions, we  have the freedom to  assume that the coefficients of $L_{z,{\bf i}}$ are entire functions without common zeros, by multiplying a suitable meromorphic function.  By this construction, for each $z\in {\mathcal Y}$, the hyperplanes defined by evaluating the linear forms $L_{z,{\bf i}}$ at $z$ with 
  $|{\bf i}| = m$ and ${\bf i}\not\in I_z$ are in general position.    
For the computation  \eqref{Linearformheight} and  \eqref{LW1}, however, we will use the original $L_{z,{\bf i}}$. 
By applying  Theorem \ref{movingsmt0} to $\Phi({\bf u})$ and the set of the linear forms 
  $\{L_{z,{\bf i}}: |{\bf i}| = m, {\bf i}\not\in I_z\}$, we have the following inequality:
  \begin{align}\label{MSMT} &\int_{S^{\sigma}_r} \sum_{ |{\bf i}| = m, {\bf i}\not\in I_z} \lambda_{L_{z, {\bf i}}} (\Phi({\bf u}))d\mu_r +\frac 1uN_{W, \sigma}(0,r)\cr
		&\leq_{\operatorname{exc}}   \left(\frac wu M+\epsilon\right)T_{\Psi, \sigma}(r)+\frac wu M^2(b+2)  (T_{ F, \sigma}(r)+T_{G, \sigma}(r))  \cr
		 &+   M^2{w^2\over u}(\mathfrak{X}^+_{\sigma}(r)+ \log r)+ {1\over u}c_{1/W, \sigma}(r)+ O(\log \max_{1\le j\le n} T_{u_j}(r)), \end{align}
		 where $$\lambda_{L_{z, {\bf i}}} (\Phi({\bf u}))=-\log |L_{z, {\bf i}}(\Phi({\bf u}))|+ \log \|\Phi({\bf u})\|+ \log \|L_{z, {\bf i}}\|+ O(1),$$
		  and $W$  is the wronskian of $\psi_1,\dots, \psi_M$. 
		By combining \eqref{MSMT} with  \eqref{LW1}, \eqref{LW2} and use (\ref{h}),  we get 
		 \begin{align*} 
		& M N_{\rm gcd, \sigma}(F({\bf u}),G({\bf u}),r)  +\frac 1uN_{W, \sigma}(0,r)\cr &\leq_{\operatorname{exc}}  \left(\frac wu M-M+\epsilon\right)T_{\Psi, \sigma}(r)+ M'mn \max_{1\le i\le n} T^M_{u_i, \sigma}(r)
		\cr &+ 
		M\left(\frac wu M(b+2)+1\right) (T_{ F, \sigma}(r)+T_{G, \sigma}(r))\cr  
		& + M^2{w^2\over u} (\mathfrak{X}^+_{\sigma}(r)+ \log r)+{1\over u}c_{{1\over W}, \sigma}(r) + Mc_{{1\over h}, \sigma}(r) + O(1).\end{align*}  
Finally, we need to estimate $T_{\Psi, \sigma}(r)$ to complete the proof.
Recall that $\psi_i=\phi(\mathbf u)/h$ and $\phi_i$ is of the form $F{\bf x}^{\bf i}$, or $G{\bf x}^{\bf i}$, which is  a combination of monomial of degree $\le m$ with coefficients coming from $F$ or $G$.
Therefore, by \eqref{logunit} 
\begin{align*}
\log \|\psi_i\|\le m  \sum_{j=1}^n  \log^+|u_j|+\log\|F\|+\log\|G\|+\log \frac1h.
\end{align*}
Combining this with  Lemma \ref{jensenh} and \eqref{pole_h}, we obtain 
 $$
T_{\Psi, \sigma}(r)\le mn  \max_{1\le i\le n} T^M_{u_i, \sigma}(r) +2\left(T_{F,\sigma}(r)+T_{G,\sigma}(r)\right)+
   c_{\frac1h,\sigma}(r).
$$
\end{proof}
\begin{theorem} [The key theorem]\label{movinggcdn}  Let ${\mathcal Y}\subset {\Bbb C}$ be given  as above   with   $\sigma$  being associated to the discs $\{{\mathbb D}(a_j,  2r_j )\}$. 
Let $u_1, \dots, u_n$, with $n\ge 2$,  be  meromorphic functions on ${\Bbb C}$ such that they are  units on ${\mathcal Y}$.  Let $K$ be a subfield of the field of meromorphic functions on ${\Bbb C}$.  Let $F$ and $G$ be coprime  polynomials in $K[x_1,\hdots,x_n]$.   Assume that at least  one of the coefficients in  the expansions of $F$ and $G$ is equal to 1.    
Let $\epsilon>0$.
 Then, by shrinking $r_j$ small enough (depending on $\epsilon$, $u_i, 1\leq i\leq n$, and on $F$ and $G$), we can  choose   a positive integer  $m$ and positive reals $c_i$, $0\le i\le 5$, all depending only on $\epsilon$,  $n$, and the degree of $F$ and $G$ such that,
 either 
\begin{align}\label{gdegeneraten}
		T^{M}_{u_1^{m_1}\cdots u_n^{m_n}, \sigma}(r)  \le_{\rm exc}  c_1 \max_{1\le i\le n}\{\log T_{u_i}(r)\}+ c_2(T_{ F, \sigma}(r)+T_{G, \sigma}(r)) +c_3(\mathfrak{X}^+_{\sigma}(r)+ \log r)	\end{align}
	for some non-trivial tuple of integers $(m_1,\hdots,m_n)$ with $|m_1|+\cdots+|m_n|\le 2m$,  or
\begin{eqnarray}\label{gcd0n}
		N_{\rm gcd, \sigma} (F(u_1,\hdots,u_n),G(u_1,\hdots,u_n),r) &\le_{\rm exc} 
		\epsilon \max_{1\le i\le n} \{T_{u_i}(r)\} + c_4(T_{ F, \sigma}(r)+T_{G, \sigma}(r))\nonumber\\
		&+c_5(\mathfrak{X}^+_{\sigma}(r)+ \log r).
	\end{eqnarray}   
			  Furthermore, if $\epsilon>0$ is sufficiently small, then we may choose $m=C_1\epsilon^{-1}$, where $C_1$ is a positive real  independent of $\epsilon$. 
\end{theorem}
\begin{proof} We note that $c_i, 1\leq i\leq 5,$ will be determined during the course of the proof.
Recall that $M:=M_{m}:=2\binom{m+n-d}{n}-\binom{m+n-2d}{n}$  and $M':=M'_{m}:=\binom{m+n}{n}-M=O(m^{n-2})$.
   Based on the above estimates, we can  choose   a  real $C_1\ge 1$  independent of $\epsilon$ and the $u_i, 1\leq i\leq n, $ such that $  m= C_1\epsilon^{-1}\ge 2d$, and
	\begin{align}\label{findm} 
		\frac{M'mn}{M}\le\frac{\epsilon}{4}.
	\end{align}
	By \eqref{vlimit} we
	may then choose a sufficiently large integer $b\in\mathbb{N}$ such
	that 
	\begin{align}\label{wu}
		\frac{w}{u}-1\le\frac{\epsilon}{4mn},
	\end{align}
	where $w:=\dim_{{\bf k}}V_{F,G}(Mb)$ and $u:=\dim_{{\bf k}}V_{F,G}(Mb-M)$.

 Suppose that  the set $\{u_1^{i_1}\cdots u_n^{i_n}: i_1+\cdots+i_n\le m\}$ is linearly independent over $V_{F,G}(Mb+1)$.
	Then,  Theorem \ref{Mfundamental} implies that, by noticing \eqref{findm} and \eqref{wu}, 
 \begin{align*} 
		 N_{\rm gcd, \sigma}(F({\bf u}),G({\bf u}),r) 
		& \le_{\rm exc}\epsilon    \max_{1\le i\le n} \{T^M_{u_i, \sigma}(r)\}+ {\tilde c} \max_{1\le i\le n}\{\log T_{u_i}(r)\}\\
		&~+ c_4(T_{ F, \sigma}(r)+T_{G, \sigma}(r))+c_5  \left(\mathfrak{X}^+_{\sigma}(r)+ \log r+  c_{{1\over h}, \sigma}(r) + c_{{1\over W}, \sigma}(r)\right), 
	\end{align*}
 where  ${\tilde c}$ and $c_i$, $4\le i\le 5$, are computable positive reals.
 
 If  the set $\{u_1^{i_1}\cdots u_n^{i_n}: i_1+\cdots+i_n\le m\}$ is linearly   dependent over $V_{F,G}(Mb+1)$,
i.e. there is a non-trivial
relation 
\begin{align}\label{uniteq}
\sum_{\mathbf{i}\in I}\alpha_{\mathbf{i}}\mathbf{u}^{\mathbf{i}}=0, 
\end{align}
where the sum runs over those $\mathbf{i}\in\mathbb{Z}_{\ge0}^{n}$
and $|\mathbf{i}|\le m$, $\alpha_{\mathbf{i}}\in V_{F,G}(Mb+1)$
for each $\mathbf{i}$, and $\alpha_{\mathbf{i}}\ne 0$ for $\mathbf{i}\in I$.  Without loss of generality, we assume that no proper subsum of \eqref{uniteq} vanishes.
 Then Theorem  \ref{trunborel}  implies that
\begin{align}\label{truncountn}
 T^{M}_{\frac{\alpha_{\mathbf{i}}}{\alpha_{\mathbf{i}_{0}}}\mathbf{u}^{\mathbf{i}-\mathbf{i}_{0}}, \sigma}(r)
&\leq_{\exc}  \sum_{\mathbf{i}\in I}  N_{\beta\alpha_{\mathbf{i}},\sigma}(0,r)+c_3 (\mathfrak{X}^+_{\sigma}(r)+  \log r)\nonumber \\
& +c_{\frac1{D_0},\sigma}(r)+ \sum_{\substack{\mathbf{i}\in I \\ \mathbf{i}\neq \mathbf{i}_0}}  
c_{\beta\alpha_{\mathbf{i}}{\bf u}^{\bf i}, \sigma}(r)+ O(\max_{0\leq i\leq n}\{\log T_{u_i} (r)\})+O(\log (T_F(r)+\log F_G(r)), 
\end{align}
where $c_3={\binom{n+m}{n}}\left({\binom{n+m}{n}}+1\right)$. 
We note that 
\begin{align}\label{truncount}
 N_{\beta\alpha_{\mathbf{i}},\sigma}(0,r)\le N_{ \alpha_{\mathbf{i}},\sigma} (0,r)+N_{\beta,\sigma} (0,r)\le 2(Mb+1)(T_{ F, \sigma}(r) +T_{G, \sigma}(r))+c_{{1\over \alpha_{\mathbf{i}}}, \sigma}(r),
\end{align}
since the  zero of $\beta$ comes from the pole of $\alpha_{\mathbf{i}}$ and by \eqref{rci}.  Also,   since $\alpha_{\mathbf{i}}\in V_{F,G}(Mb+1)$, 
 \begin{align}\label{htt}
T^{M}_{\frac{\alpha_{\mathbf{i}{0}}}{\alpha_{\mathbf{i}}} , \sigma}(r)  &\le T^M_{1/\alpha_{\mathbf{i}} , \sigma}(r)+T^M_{\alpha_{\mathbf{i_0}}, \sigma}(r)
\le T^M_{ \alpha_{\mathbf{i}} , \sigma}(r)+T^M_{ \alpha_{\mathbf{i_0}} , \sigma}(r) + c_{{1\over  \alpha_{\mathbf{i}}}, \sigma}(r) \nonumber \\
& \le 2(Mb+1) (T_{ F, \sigma}(r)+T_{G, \sigma}(r))+c_{{1\over  \alpha_{\mathbf{i}}}, \sigma}(r).
\end{align}
 Then,  by combining (\ref{truncountn}),  (\ref{truncount}), (\ref{htt}),
\begin{align*}
T^{M}_{\mathbf{u}^{\mathbf{i}-\mathbf{i}_{0}}, \sigma}(r)  &\le T^{M}_{\frac{\alpha_{\mathbf{i}}}{\alpha_{\mathbf{i}_{0}}}\mathbf{u}^{\mathbf{i}-\mathbf{i}_{0}}, \sigma}(r)+ T^{M}_{\frac{\alpha_{\mathbf{i}_{0}}}{\alpha_{\mathbf{i}}} , \sigma}(r)  
\le_{\rm exc}   c_2 (T_{ F, \sigma}(r)+T_{G, \sigma}(r))+c_3 (\mathfrak{X}^+_{\sigma}(r) + \log r ) \cr
&+  c_{\frac1{D_0},\sigma}(r) + 2 c_{{1\over \alpha_{\mathbf{i}}}, \sigma}(r) + \sum_{\substack{\mathbf{i}\in I \\ \mathbf{i}\neq \mathbf{i}_0}}  
c_{\beta\alpha_{\mathbf{i}}{\bf u}_{\mathbf{i}}, \sigma}(r) +O(\max_{0\leq i\leq n}\{\log T_{u_i} (r)\}), 
\end{align*}
where  $c_2=4 (Mb+1)$. 

 By Lemma \ref{Ch}, we can shrink $r_j$ small enough so that all of the 
quantities
\[
c^{+}_{1/h,\sigma}(r), \quad 
c^{+}_{1/W,\sigma}(r), \quad
c_{1/D_0,\sigma}(r), \quad
c_{1/\alpha_{\mathbf{i}},\sigma}(r) \ \ ( \mathbf{i}\in I), \quad
\sum_{\substack{\mathbf{i}\in I \\ \mathbf{i}\ne \mathbf{i}_0}}
c_{\beta \alpha_{\mathbf{i}} \mathbf{u}^{\mathbf{i}},\,\sigma}(r)
\]
are bounded by \(O(1)\), where the implied constant depends on $\mathbf{u}$, $F$, 
and $G$, but is independent of $r$. 
	 In addition, we  shrink $r_j$ small enough such that (\ref{Amero}) and the Proposition \ref{exhuastion} for $u_i, 1\leq i\leq n$, hold, i.e. $T_{u_i}(r)$ and $T^M_{u_i,  \sigma}(r)$ differ only by  $\log r+O(1)$. This proves the  key theorem. 
	\end{proof}

For the convenience of later application, we state the following result for $n=1$, which can be  easily obtained from the theory of resultant (see \cite[Proposition 10]{RuWang2024}). 
 \begin{proposition}  \label{gcdn1}
  Let $K$ be a subfield of the field of meromorphic functions on ${\Bbb C}$.  Let $F,G\in K[x]$ be nonconstant coprime  polynomials.  Let ${\mathcal Y}\subset {\Bbb C}$ be given as above with  $\sigma$  being associated to the discs $\{{\mathbb D}(a_j,  2r_j )\}$.     Then there exists a positive constant $c$ such that for any  meromorphic function $g$ on ${\mathcal Y}$, the following holds:
	\begin{align*} 
		N_{\rm gcd,\sigma} (F(g),G(g),r)\le  c(T_{ F, \sigma}(r)+T_{G, \sigma}(r)).  
	\end{align*} 
\end{proposition}

\section{The abc theorem for parabolic Riemann surfaces}\label{abcparabolic}
 Let ${\mathcal Y}\subset {\Bbb C}$ be given as above with $\sigma$  being associated to the discs $\{{\mathbb D}(a_j,  2r_j )\}$.  Let ${\bf u}=(u_1,\hdots,u_n)$
where $u_1,\hdots,u_n$ are meromorphic functions on ${\Bbb C}$ such that each restriction $u_i|_{\mathcal Y}, 1\leq i\leq n,$ is a unit.
 Let $\mathbb K_{\mathbf u}$ be the set of meromorphic function  $a$   on ${\Bbb C}$ for which   there exists a nonnegative  constant $c_a$ (depending on $a$) with
\begin{align}\label{Ku}
T^M_{a, \sigma}(r) \le_{\rm exc}   c_a\left(\mathfrak{X}^+_{\sigma}(r)+ \log r +\max_{1\le i\le n}\{\log T_{u_i}(r)\}\right),
\end{align} 
 after shrinking radii $r_j$ suffciently small  (depending on $a$).  By   \eqref{rci} and Lemma \ref{Ch}, it is easy to see that  $\mathbb K_{\mathbf u}$
 is a  subfield of the meromorphic function field.
Notably, in our application, the constant $c_a$ can be effectively computed.    We will use Theorem  \ref{movinggcdn} for $K:=\mathbb K_{\mathbf u}$.
  Note that, by Lemma \ref{DL} and by \eqref{rci},  if $a\in \mathbb K_{\mathbf u} $, then $a'\in \mathbb K_{\mathbf u}$,
since
\begin{align*}
T^M_{a', \sigma}(r) &\le T^M_{\frac{a'}{a}, \sigma}(r)+ T^M_{a, \sigma}(r)\le m_{a'/a, \sigma}(r)+N_{a,  \sigma}(0, r)+N_{a,  \sigma}(\infty, r)+ T^M_{a, \sigma}(r)\\
&\le  m_{a'/a, \sigma}(r)+ T^M_{\frac 1a, \sigma}(r)+ 2T^M_{a, \sigma}(r).
\end{align*}

Let  $\mathbf{x}:=(x_{1},\ldots,x_{n})$.  For $\mathbf{i}=(i_{1},\ldots,i_{n})\in\mathbb{Z}^{n}$, we let $\mathbf{x^{i}}:=x_{1}^{i_{1}}\cdots x_{n}^{i_{n}}$ and  $\mathbf{u^{i}}:=u_{1}^{i_1}\cdots u_{n}^{i_{n}}$. For a non-constant polynomial $F(\mathbf{x})=\sum_{\mathbf{i}}a_{\mathbf{i}}\mathbf{x}^{\mathbf{i}}\in   \mathbb K_{\mathbf u} [\mathbf{x}]:=      \mathbb K_{\mathbf u}[x_1,\dots,x_n]$,
we define 
\begin{align}\label{Duexpression}
	D_{\mathbf{u}}(F)(\mathbf{x}):=\sum_{\mathbf{i}}\frac{(a_{\mathbf{i}}\mathbf{u}^{\mathbf{i}})'}{\mathbf{u}^{\mathbf{i}}}\mathbf{x}^{\mathbf{i}}
	=\sum_{\mathbf{i}}\left(a_{\mathbf{i}}'+a_{\mathbf{i}}  \cdot\sum_{j=1}^{n}i_j\frac{u_{j}'}{u_{j}}\right)\mathbf{x}^{\mathbf{i}}\in     \mathbb K_{\mathbf u}[\mathbf{x}].
\end{align} 
A direct computation shows that 
\begin{align}\label{fuvalue}
	F(\mathbf{u})'=D_{\mathbf{u}}(F)(\mathbf{u}),
\end{align}
and that  the following product rule 
\begin{align}\label{productrule}
	D_{\mathbf{u}}(FG)=D_{\mathbf{u}}(F)G+FD_{\mathbf{u}}(G)
\end{align}
holds for  $F,G\in \mathbb K_{\mathbf u}[\mathbf{x}]$.

 By Lemma  \ref{unit},  we see that $u_i'/u_i \in \mathbb K_{\mathbf u}$.  
In the following lemma and in Theorem~\ref{prethm}, we will identify certain 
monomial products of bounded degree in the $u_i$ that satisfy \eqref{Ku}, and 
hence are contained in $\mathbb K_{\mathbf u}$.

The following lemma shares the same proof of Lemma 5.1 in  \cite{GW22}.
\begin{lemma}\label{coprimeD} 
Let $\mathbf u=(u_1,\hdots,u_n)$, where $u_i$, $1\le i\le n$, are the units  on ${\mathcal Y}$.  
  Let $G$ be a non-constant     polynomial in $  \mathbb K_{\mathbf u}[\mathbf{x}]$  with no monomial factors and no repeated factors,  and assume that at least one of its coefficients is equal to 1.
		Then   $F:=D_{\mathbf{u}}(G)$ and $G$ 
	are coprime  in  $  \mathbb K_{\mathbf u}[\mathbf{x}]$ unless there exists a non-trivial tuple of integers $(m_1,\hdots,m_n)$ with $\sum_{i=1}^n|m_i|\le 2\deg G$ such that  
 $T^M_{ u_1^{m_1} \cdots u_n^{m_n}, \sigma}\le T_{G, \sigma}(r).$ 
\end{lemma}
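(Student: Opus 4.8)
The plan is to argue by contradiction: suppose $F=D_{\mathbf u}(G)$ and $G$ share a common non-constant irreducible factor $P$ in $\mathbb K_{\mathbf u}[\mathbf x]$, and deduce the existence of a monomial relation among the $u_i$ of the claimed shape and height. Since $G$ has no repeated factors, write $G=PH$ with $P\nmid H$ in $\mathbb K_{\mathbf u}[\mathbf x]$. Applying the product rule \eqref{productrule}, $F=D_{\mathbf u}(G)=D_{\mathbf u}(P)H+PD_{\mathbf u}(H)$, so $P\mid F$ forces $P\mid D_{\mathbf u}(P)H$, and as $P\nmid H$ we get $P\mid D_{\mathbf u}(P)$. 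Now compare degrees: the operator $D_{\mathbf u}$ does not raise the degree of a polynomial (it replaces each coefficient $a_{\mathbf i}$ by $a_{\mathbf i}\sum_j i_j u_j'/u_j$, keeping the same monomial support), so $\deg D_{\mathbf u}(P)\le \deg P$, and the divisibility $P\mid D_{\mathbf u}(P)$ therefore gives $D_{\mathbf u}(P)=\ell\cdot P$ for some $\ell\in\mathbb K_{\mathbf u}$. This is the key structural equation.

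Next I would extract the monomial relation from $D_{\mathbf u}(P)=\ell P$. Writing $P=\sum_{\mathbf i}b_{\mathbf i}\mathbf x^{\mathbf i}$, comparing coefficients of each monomial $\mathbf x^{\mathbf i}$ with $b_{\mathbf i}\ne 0$ gives $b_{\mathbf i}\sum_{j=1}^n i_j\,u_j'/u_j=\ell\,b_{\mathbf i}$, hence $\sum_j i_j u_j'/u_j=\ell$ for every $\mathbf i$ in the support of $P$. Since $P$ has no monomial factors (inherited from $G$ having none — an irreducible factor of a polynomial with no monomial factors is either a monomial, which is excluded as $P$ is non-constant and non-monomial, or itself has no monomial factor), $P$ has at least two distinct monomials $\mathbf x^{\mathbf i}$ and $\mathbf x^{\mathbf i'}$ in its support. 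Subtracting the two relations yields $\sum_j (i_j-i_j')\,u_j'/u_j=0$, i.e. $\bigl(\prod_j u_j^{\,i_j-i_j'}\bigr)'/\prod_j u_j^{\,i_j-i_j'}=0$, so $u^{\mathbf m}:=\prod_j u_j^{\,m_j}$ with $\mathbf m=\mathbf i-\mathbf i'\ne 0$ is a nonzero constant function. The exponent bound $\sum_j|m_j|=\sum_j|i_j-i_j'|\le \deg P+\deg P\le 2\deg G$ holds since each of $\mathbf i,\mathbf i'$ has total degree at most $\deg P\le\deg G$.

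Finally, a constant function $c$ has $T_{c,\mathcal Y}(r)=O(1)$, so in particular $T_{u_1^{m_1}\cdots u_n^{m_n},\mathcal Y}(r)\le T_{G,\mathcal Y}(r)$ holds (the latter being nonnegative, and $G$ non-constant so this is eventually harmless; if one wants the inequality literally for all large $r$ it follows since $T_{G,\mathcal Y}(r)$ is bounded below and $T_{u^{\mathbf m},\mathcal Y}(r)=O(1)$). This is exactly the exceptional conclusion in the lemma, completing the contrapositive. The main obstacle — and the only point requiring genuine care — is the step $P\mid D_{\mathbf u}(P)\Rightarrow D_{\mathbf u}(P)=\ell P$: one must verify that $D_{\mathbf u}$ preserves (does not increase) the set of monomials appearing, which is immediate from the defining formula \eqref{Duexpression}, and then observe that a polynomial dividing another of no larger degree must be a $\mathbb K_{\mathbf u}$-scalar multiple of it. Everything else is formal manipulation with the product rule and elementary properties of the characteristic function; this is why the statement can be asserted to share the proof of \cite[Lemma 5.1]{GW22}.
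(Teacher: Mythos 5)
Your overall route is the same as the paper's: pass to an irreducible common factor $P$ of $G$ and $D_{\mathbf u}(G)$, use the product rule \eqref{productrule} to get $P\mid D_{\mathbf u}(P)$, conclude $D_{\mathbf u}(P)=\ell P$ by the degree comparison, and compare coefficients at two monomials in the support of $P$ to extract a multiplicative relation among the $u_i$ with $\sum_i|m_i|\le 2\deg G$. The gap is in how you carry out the coefficient comparison. The coefficient of $\mathbf x^{\mathbf i}$ in $D_{\mathbf u}(P)$ is $(b_{\mathbf i}\mathbf u^{\mathbf i})'/\mathbf u^{\mathbf i}=b_{\mathbf i}'+b_{\mathbf i}\sum_j i_ju_j'/u_j$; the expanded right-hand side of \eqref{Duexpression} omits the term $b_{\mathbf i}'$ (a typo in the paper), but the first expression is the operative definition, since it is what makes \eqref{fuvalue} hold. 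The lemma allows $G\in\mathbb K_{\mathbf u}[\mathbf x]$ with non-constant coefficients, and this generality is genuinely needed later (in the proof of Theorem \ref{mainthmK} the lemma is applied, through Theorem \ref{prethm}, to polynomials whose coefficients involve $\lambda_1\in\mathbb K_{\mathbf u}$). So your identity ``$\sum_j i_ju_j'/u_j=\ell$ for every $\mathbf i$ in the support'' is not what $D_{\mathbf u}(P)=\ell P$ gives; the correct identity is $(b_{\mathbf i}\mathbf u^{\mathbf i})'/(b_{\mathbf i}\mathbf u^{\mathbf i})=\ell$ for each such $\mathbf i$. In particular your conclusion that $u_1^{m_1}\cdots u_n^{m_n}$ is a nonzero constant is too strong and false in general.

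The repair is exactly the paper's argument: subtracting the correct identities for two exponents $\mathbf i\ne\mathbf i'$ in the support shows that $b_{\mathbf i}\mathbf u^{\mathbf i}/(b_{\mathbf i'}\mathbf u^{\mathbf i'})$ is a nonzero constant, i.e.\ $\mathbf u^{\mathbf i-\mathbf i'}=c\,b_{\mathbf i'}/b_{\mathbf i}$ with $c\in\mathbb C^*$. This is generally not constant, but its characteristic function is controlled by the coefficients of $G$, giving $T_{\mathbf u^{\mathbf i-\mathbf i'},\mathcal Y}(r)\le T_{G,\mathcal Y}(r)+O(1)$ --- which is precisely the exceptional alternative in the lemma, and explains why the bound in the statement is $T_{G,\mathcal Y}(r)$ rather than $O(1)$. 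The rest of what you wrote is fine and matches the paper: the reduction to an irreducible factor (the paper reduces directly to $G$ irreducible, citing the first step of the proof in \cite{GW22}), the observation that $P$ has at least two monomials because $G$ has no monomial factors, and the exponent bound $\sum_j|i_j-i_j'|\le 2\deg G$.
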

\begin{proof}
 Relation \eqref{productrule} implies that we may assume, as  in the first step of the proof of  \cite[Lemma 5.1]{GW22},  that $G$ is irreducible. 
Write $G=\sum_{\mathbf{i}}b_{\mathbf{i}}\mathbf{x}^{\mathbf{i}}\in \mathbb K_{\mathbf u}[\mathbf{x}]$, which contains at least two distinct terms, since $G$ has no monomial factors. 
	Then   $D_{\mathbf{u}}(G) /G\in\mathbb C^*$  since  $D_{\mathbf{u}}(G)$ is not zero and $\deg D_{\mathbf{u}}(G)=\deg G$.
	Comparing the coefficients of $G$ and $D_{\mathbf{u}}(G)$, for nonzero $b_{\mathbf{i}}$ and  $b_{\mathbf{j}}=1$  in $ \mathbb K_{\mathbf u}$ ($\mathbf{i}\ne\mathbf{j}$), we have from \eqref{Duexpression} that
	\begin{equation*}\label{eq_diff1}
		\frac{(b_{\mathbf{i}}\mathbf{u}^{\mathbf{i}})'}{b_{\mathbf{i}}\mathbf{u}^{\mathbf{i}}}=\frac{ (\mathbf{u}^{\mathbf{j}})'} {\mathbf{u}^{\mathbf{j}}},
	\end{equation*} 
	where $\mathbf{i}=( i_1,\hdots,i_n)$, and $\mathbf{j}=( j_1,\hdots,j_n)$.	It implies that 
	$b_{\mathbf{i}}\frac {\mathbf{u}^{\mathbf{i}}}{ \mathbf{u}^{\mathbf{j}}}\in\mathbb C$.  Therefore,
	$$
T^M_{ u_1^{j_1-i_1} \cdots u_n^{j_n-i_n},\sigma}\le T_{G, \sigma}(r).
	$$
\end{proof}
As an application of the  key theorem (Theorem  \ref{movinggcdn})  and using the above lemma, we obtain the following theorem.
 \begin{theorem}\label{prethm}   Let ${\mathcal Y}\subset {\Bbb C}$ be given as above  with   $\sigma$ being associated to the  discs  $\{{\mathbb D}(a_j,  2r_j )\}$.
 Let ${\bf u}=(u_1, \dots, u_n)$, where each $u_i$, $1\le i\le n$, is a meromorphic function on ${\Bbb C}$  which is a unit on ${\mathcal Y}$. 
   Let $G$ be a non-constant  polynomial in $\mathbb K_{\mathbf u}[\mathbf{x}]$  with no monomial factors and no repeated factors.  Assume that at least  one of the coefficients in  the expansions of  $G$ is equal to 1.  
 Then,  by shrinking $r_j$ small enough (depending on $\epsilon$, ${\bf u}$ and $G$), we can find   a positive integer    $m$  and   positive constants  $\tilde c_1$ and $\tilde c_2$ depending only on $\epsilon$,  $n$, and on the degree and the coefficients of $G$, but  independent of $\mathbf u$,  for which the following holds:
 Either there exists a non-trivial $n$-tuple $(m_1,\hdots,m_n)$ of integers with $\sum_{i=0}^n |m_i|\le  2m$ such that 
 \begin{align}\label{(ii)}
T^{M}_{ u_1^{m_1}\cdots u_n^{m_n},\sigma}(r) \le_{\rm exc}  \tilde c_1(\mathfrak{X}^+_{\sigma}(r)+ \log r) +O(\max_{1\le j\le n}\{\log T_{u_j}(r)\}),
	\end{align}
or else
 \begin{align}\label{(new)}
N_{G(\mathbf{u}), \sigma}(0,r)-N^{(1)}_{G(\mathbf{u}), \sigma}(0,r)\le_{\rm exc} 
\epsilon \max_{1\le j\le n }\{T_{u_j}(r)\}+ \tilde c_2(\mathfrak{X}^+_{\sigma}(r)+ \log r).
\end{align}
  \end{theorem}
  
  \begin{proof}
 We first note that the constant ${\tilde c}_1, {\tilde c}_2$    will be determined during the course of the proof. 
Let $z_0\in\mathcal Y$.  If $v_{z_0}( G(\mathbf{u}))\ge 2$, then it follows from \eqref{fuvalue} that $v_{z_0}( D_{\mathbf{u}}(G)(\mathbf{u}))= v_{z_0}( G(\mathbf{u}))-1$.
Hence,
$$
\min\{v_{z_0}^+(G(\mathbf{u})),v_{z_0}^+(D_{\mathbf{u}}(G)(\mathbf{u}))\}\ge v_{z_0}^+( G(\mathbf{u}))-\min\{1,v_{z_0}^+( G(\mathbf{u}))\}.
$$
Consequently,
\begin{align}\label{truncate}
N_{\gcd, \sigma}(G(\mathbf{u}), D_{\mathbf{u}}(G)(\mathbf{u}),r)\ge N_{G(\mathbf{u}), \sigma}(0,r)-N^{(1)}_{G(\mathbf{u}), \sigma}(0,r).
\end{align}
Write $G= \sum_{{\bf i}} a_{{\bf i}}{\bf x}^{{\bf i}}$ with  $a_{{\bf i}}\in \mathbb K_{\mathbf u}$. Then, from the definition of $\mathbb K_{{\bf u}}$, 
 there exists a nonnegative real  $c_G$  (depending on $G$)  such that 
 \begin{align}\label{heightG}
T_{ G, \sigma}(r)\le_{\rm exc}   c_G\left(\mathfrak{X}^+_{\sigma}(r)+ \log r +\max_{1\le i\le n}\{\log T_{u_i}(r)\}\right) 
\end{align}  
 after shrinking radii $r_j$ suffciently small  (depending on $G$).  
Furthermore,  by  Lemma \ref{DL}, we have
\begin{align}\label{heightDG}
T_{D_{\mathbf{u}}(G), \sigma}(r)&\le   2T_{ G, \sigma}(r)+\deg G \sum_{i=1}^n T^M_{u_i'/u_i, \sigma}(r) \nonumber \\
&\le_{\rm exc} (2c_G + n\deg G )(\mathfrak{X}^+_{\sigma}(r)+\log r) + O(\max_{1\le i\le n}\{\log T_{u_i}(r)\}).
\end{align} 
 By Lemma \ref{coprimeD} and \eqref{heightG},  either  $G$ and $D_{\mathbf{u}}(G)$ are coprime or   or there exists a non-trivial $n$-tuple $(m_1,\ldots,m_n)\in\mathbb Z^n$ with $\sum_{i=0}^n |m_i|\le  2\deg G$
such that
 \begin{align*} 
T^M_{u_1^{m_1}\cdots u_n^{m_n},\sigma}(r) 
\le_{\rm exc} T_{G, \sigma}(r)\le_{\rm exc}   c_G(\mathfrak{X}^+_{\sigma}(r)+ \log r)+O(\max_{1\le i\le n}\{\log T_{u_i}(r)\}).
	\end{align*}

   It remains to consider where $G$ and $D_{\mathbf{u}}(G)$ are coprime.  
Let $\epsilon>0$ be given, which may assume  to be sufficiently small.
 By apply Theorem \ref{movinggcdn}, after  shrinking $r_j$ small enough (depending on $\epsilon$, ${\bf u}$ and $G$),  we find  $m$ and $ c_i>0$, $1\le i\le 4$,  such that  
either 
 \begin{align}\label{gdegenerate1}
		  T^M_{ u_1^{m_1}\cdots u_n^{m_n},\sigma}(r) \le_{\rm exc}   c_1(T_{ G, \sigma}(r)+ T_{D_{\mathbf{u}}(G), \sigma}(r)) +  c_2(\mathfrak{X}^+_{\sigma}(r)+ \log r) 
	\end{align}
 for some non-trivial tuple of integers $(m_1,\hdots,m_n)$ with $|m_1|+\cdots+|m_n|\le 2m$, 
or 
  \begin{align}\label{gcd3}
&N_{\rm gcd, \sigma} (G({\mathbf{u}}), D_{\mathbf{u}}(G)({\mathbf{u}}),r)  \nonumber \\
&\le_{\rm exc}{\epsilon\over 2}\max_{1\le i\le n}T_{u_i}(r) + c_3(T_{ G, \sigma}(r)+ T_{D_{\mathbf{u}}(G), \sigma}(r))+c_4(\mathfrak{X}^+_{\sigma}(r)+ \log r).
\end{align} 
We can conclude the proof by combining \eqref{heightG},   \eqref{heightDG}, \eqref{gdegenerate1}, and \eqref{gcd3}. 
\end{proof}

The proof of \cite[Theorem 4]{GNSW} can be adapted to the current situation, leading to the following result.  
 
 \begin{theorem}\label{mainthmK}
Let $G$ be a non-constant  polynomial in $\mathbb C[x_1,\hdots,x_n]$ with no monomial  factors and no repeated factors. 
Then, for any $\epsilon>0$,   there exist a positive real number $c_0$, a proper Zariski closed subset $Z$ of $\mathbb{A}^n(\mathbb C)$  expressible as the zero locus of a finite set
$\Sigma\subset \mathbb C[x_1,\ldots,x_n]$ with the following property: 
Let ${\bf u}=(u_1, \dots, u_n)$, where  each $u_i$, $1\le i\le n$, is meromorphic function on ${\Bbb C}$, with $H({\bfu})\ne 0$ for all $H\in\Sigma$ and $G(\mathbf u)\ne 0$. Let ${\mathcal Y}\subset {\Bbb C}$ be given as above  with   $\sigma$  being associated to the discs $\{{\mathbb D}(a_j,  2r_j )\}$.  Assume that $u_1, \dots, u_n$ are units on ${\mathcal Y}$.
Then, by shrinking $r_j$ small enough (depending on ${\bf u}$ and $G$), we have
\begin{align}\label{GCD1}  
N_{G(\mathbf u),\sigma}(0,r)-N^{(1)}_{G(\mathbf u),\sigma}(0,r)) \le_{\rm exc} \epsilon\max_{1\le i\le n}\{T_{u_i}(r)\} +c_0(\mathfrak{X}^+_{\sigma}(r)+ \log r).
\end{align}   
 Here, $c_0$ can be  effectively bounded from above in terms of $\epsilon$, $n$, and the degree of $G$.    Moreover,  the finite set
$\Sigma\subset \mathbb C[x_1,\ldots,x_n]$ satisfies: {\rm(Z1)} $\Sigma$ depends on $\epsilon$ and $G$, but not on ${\bf u}$ or $\mathcal Y$, and can be determined explicitly, {\rm(Z2)} $\vert \Sigma\vert$ and the degree of each polynomial in $\Sigma$ can be effectively bounded from above in terms of $\epsilon$, $n$, and the degree of $G$.  
 \end{theorem}
 We outline a proof here.
 \begin{proof} 
  We follow the proof of \cite[Theorem 4]{GNSW}.   We apply Theorem \ref{prethm} through several reduction steps to derive a contradiction. The constant $c_0$ will be determined in the course of the proof. 
We also remark that, since the coefficients of $G$ lie in $\mathbb{C}$, the assumption that at least one coefficient in the expansion of $G$ is equal to $1$ imposes no restriction.
By Theorem \ref{prethm}, after shrinking $r_j$, 
  we have either 
 \begin{equation}\label{other}T^{M}_{ u_1^{m_1}\cdots u_n^{m_n},\sigma}(r) \le_{\rm exc}  \tilde c_1(\mathfrak{X}^+_{\sigma}(r)+ \log r) +O(\max_{1\le j\le n}\{\log T_{u_j}(r)\})\end{equation}
or
 \begin{align}\label{(newn)}
N_{G(\mathbf{u}), \sigma}(0,r)-N^{(1)}_{G(\mathbf{u}), \sigma}(0,r)\le_{\rm exc} 
\epsilon \max_{1\le j\le n }\{T_{u_j}(r)\}+ \tilde c_2(\mathfrak{X}^+_{\sigma}(r)+ \log r).
\end{align}
 Let $c_0 \ge \tilde c_2$.   If (\ref{(newn)}) holds, then we are done.   Assume it is  not, then (\ref{other}) holds for a non-trivial $n$-tuple $(m_1,\hdots,m_n)$ of integers with $\sum\vert m_i\vert\leq M_1$, where 
   $M_1$,  depending only on $\epsilon$,  $n$, and the degree of $G$, but  independent of $\mathbf u$. Thus, we have
   \begin{equation}\label{Ru2}
  \lambda_1:={u_1}^{m_1}\cdots  {u_n}^{m_n} \in \mathbb K_{\mathbf u}.\
  \end{equation}   
      We may assume $\gcd(m_1,\ldots,m_n)=1$. By Lemma 14 in \cite{GNSW}, $(m_1,\hdots,m_n)$ extends to a basis 
$(m_1,\hdots,m_n)$,  $(a_{21},\hdots,a_{2n}),\hdots, (a_{n1},\hdots,a_{nn})$
of  $\mathbb Z^n$ with
\begin{align}\label{basisbound_step1}
|a_{i1}|+\cdots+|a_{in}|\le M_1+n\quad\text{ for } 2\le i\le n.
\end{align} 
Consider the change of variables
\begin{align}\label{transform_var_step1}
\Lambda_1:=x_1^{m_1}\cdots x_n^{m_n}, \quad\text{and } \quad
X_{1,i}:=x_1^{a_{i1}}\cdots x_n^{a_{in}}  \quad\text{for } 2\le i\le n
\end{align}
and put
\begin{align}\label{transform_unit_step1}
\beta_{1,i}=u_1^{a_{i1}}\cdots u_n^{a_{in}}  \quad\text{for } 2\le i\le n.
\end{align}
Let $A_1$ denote the $n\times n$ matrix whose rows are the above basis of $\mathbb{Z}^n$. Then we formally express the above identities as
\begin{equation}\label{eq:formal_A1}
(\Lambda_1,X_{1,2},\ldots,X_{1,n})=(x_1,\ldots,x_n)^{A_1}\quad \text{and} \quad (\lambda_1,\beta_{1,2},\ldots,\beta_{1,n})=(u_1,\ldots,u_n)^{A_1}.
\end{equation}
Let $B_1=A_1^{-1}$. The entries of $B_1$ can be bounded from above in terms of $M_1$ and $n$. We have
\begin{equation}\label{eq:formal_B1}
(x_1,\ldots,x_n)=(\Lambda_1,X_{1,2},\ldots,X_{1,n})^{B_1}\quad \text{and} \quad (u_1,\ldots,u_n)=(\lambda_1,\beta_{1,2},\ldots,\beta_{1,n})^{B_1}.
\end{equation}
Let $G_1(\Lambda_1,X_{1,2},\ldots,X_{1,n})\in {\Bbb C}[\Lambda_1,X_{1,2},\ldots,X_{1,n}]$ with no monomial factors
and 
\begin{equation}\label{eq:G_1}
G((\Lambda_1,X_{1,2},\ldots,X_{1,n})^{B_1})=\Lambda_1^{d_1}X_{1,2}^{d_2}\cdots X_{1,n}^{d_n} G_1(\Lambda_1,X_{1,2},\ldots,X_{1,n})
\end{equation}
for some integers $d_i$, $1\leq i\leq n$. Since the transformations in \eqref{eq:formal_A1} and \eqref{eq:formal_B1} are inverses of each other, and $G$ has no repeated irreducible factors, it follows that $G_1$ also has no repeated irreducible factors. The coefficients of $G_1$ are the same as the coefficients of $G$ and $\deg(G_1)$ can be bounded from above explicitly in terms of $M_1$, $n$, and $\deg(G)$. Consider $G_1(\lambda_1,X_{1,2},\ldots,X_{1,n})\in  \mathbb C(\lambda_1)[X_{1,2},\ldots,X_{1,n}]\subset {\mathbb K_{\mathbf u}}[X_{1,2},\ldots,X_{1,n}]$. 

  For the particular change of variables in \eqref{eq:formal_A1}, \eqref{eq:formal_B1}, and \eqref{eq:G_1} (that depends on the matrix $A_1$),
we apply the Lemma 16 in \cite{GNSW}  to find a nonconstant polynomial $\widetilde H_1\in \mathbb C[x_1,\hdots,x_n]$ 
such that   $G_1(\lambda_1,X_{1,2},\ldots,X_{1,n})$
has neither monomial nor repeated irreducible factors  if $\widetilde H_1(u_1,\ldots,u_n)\not\equiv  0$. 
It is important to note that the polynomial $\widetilde H_1$ is not only independent of $\mathbf u$, but also independent of $\mathcal Y$.
We now take $H_1$ to be the product of all
such $\widetilde H_1$ where $A_1$ ranges over the finitely many elements of $\GL_n(\mathbb{Z})$ with $\Vert A_1\Vert_{\infty}\leq M_1+n$. From Lemma 16 in \cite{GNSW}\, we know that $\deg H_1$ depends only on $\epsilon$, $n$ and $\deg G$. 

  Since the $u_i$'s, $\lambda_1$, and $\beta_{1,j}$'s are the units on $\mathcal Y$, we have
\begin{align}\label{eq:N_S-N_S^1 step1}
\begin{split}
  N_{G(\mathbf u),\sigma}(0,r)-N_{G(\mathbf u),\sigma}^{(1)}(0,r) 
=  N_{G_1(\lambda_1,\beta_{1,2},\ldots,\beta_{1,n}),\sigma}(0,r)-N_{G_1(\lambda_1,\beta_{1,2},\ldots,\beta_{1,n}),\sigma}^{(1)}(0,r) .
\end{split}
\end{align}
By  \eqref{eq:formal_A1} and  \eqref{eq:formal_B1},  we have:
\begin{equation}\label{htreduction}
 \max_{1\leq i\leq n}\{T_{u_i}(r)\} ={\rm O}\left(\max\{T_{\lambda_1}(r), T_{\beta_{1,2}}(r),  \ldots,T_{\beta_{1,n}}(r) \} \right).
\end{equation} 
 By \eqref{basisbound_step1}, there are only a finitely many choice of  $\beta_{1,t}$, $2\le t\le n$.  Hence, by Proposition~\ref{exhuastion}, after possibly shrinking  $r_j$ (depending on $u_i$ ), we may assume that
 \[
\bigl|T_{u_i}(r)-T^M_{u_i,\sigma}(r)\bigr|,\quad
\bigl|T_{\beta_{1,t}}(r)-T^M_{\beta_{1,t},\sigma}(r)\bigr|,\quad\text{and}\quad
\bigl|T_{\lambda_1}(r)-T^M_{\lambda_1,\sigma}(r)\bigr|
\]
are all bounded by $\log r+O(1)$ for $1\le i\le n$ and  $2\le t\le n$.
Then by  \eqref{htreduction} and \eqref{other}, we find positive real numbers $b_{1,1}$ and $b_{1,2}$ such that
\begin{align*} 
& b_{1,1} \max_{1\leq i\leq n}\{T_{u_i}(r)\}  \le_{\rm exc} \max_{2\leq t\leq n}\{T_{\beta_{1,t}}(r)\}
 + \tilde c_1(\mathfrak{X}^+_{\sigma}(r)+ \log r),\quad\text{and} \\ 
  &\max_{2\leq t\leq n}\{T_{\beta_{1,t}}(r)\} \le b_{1,2} \left( \max_{1\leq i\leq n}\{T_{u_i}(r)\} +\log r \right).
\end{align*}  
In conclusion,  at the end of this step we have that, after possibly shrinking  $r_j$, 
 the inequality
\begin{align}\label{eq:end step1 2}
 N_{G_1(\lambda_1,\beta_{1,2},\ldots,\beta_{1,n}),   \sigma }(0,r)-N_{G_1(\lambda_1,\beta_{1,2},\ldots,\beta_{1,n}),  \sigma }^{(1)}(0,r)&\le_{\rm exc} \frac{\epsilon}{ b_{1,2}} \max_{2\le i\le n}  \{T_{\beta_{1, i}}(r)\} 
 + c_0(\mathfrak{X}^+_{\sigma}(r)+ \log r)
\end{align}
fails to hold under the assumption that $H_1(u_1,\ldots,u_n)\not\equiv 0$. 

  The above is the first step. The next step is to apply Theorem \ref{prethm} to $G_1$ and ${\bf u}$.  Note that, to do so, we need to  further shrink  $r_j$ according to $G_1$. There are $(n-1)$ many steps in total as in the proof of \cite[Theorem 4]{GNSW}.
Let $2\leq s\leq n-1$ and suppose that we have completed the step $s-1$. This includes the construction
of $H_{s-1}\in {\mathbb C}[x_1,\hdots,x_n]$ with degree depends on $\epsilon$, $n$ and $\deg G$ only.   The last one is Step $(n-1)$ resulting in $H_{n-1}\in {\mathbb C}[x_1,\hdots,x_n]$. We now define
$H=H_1 \cdots H_{n-1}$.  Then $\deg H$  depends only on $\epsilon$, $n$ and $\deg G$, since each $H_i$ does so.  Moreover, the polynomial $H$ is not only independent of $\mathbf u$, but also independent of $\mathcal Y$.

 Suppose $H(u_1,\ldots,u_n)\not\equiv 0$.   
Assume we go through all the $(n-1)$ steps
to get the polynomial
$$P(X_{n-1,n}):=G_{n-1}(\lambda_1,\ldots,\lambda_{n-1},X_{n-1,n})\in {\mathbb K_{\mathbf u}}[X_{n-1,n}]$$ 
such that its degree can be bounded explicitly in terms of $M_{n-1},\ldots,M_1$, $n$, and $\deg(G)$.  At the end of the step $(n-1)$, we have that $\beta:=\beta_{n-1,n}$,  which is a (non-constant) unit on ${\mathcal Y}$,  such that 
the inequality,  after further shrinking $r_j$ according to $G, G_1, \dots, G_{n-1}$,
\begin{align}\label{Ru3}
N_{P(\beta ), \sigma}(0,r)-N^{(1)}_{P(\beta ), \sigma}(0,r)
\le_{\rm exc} \frac{\epsilon}{b_{n-1,2}}  T_{\beta}(r) + c_0(\mathfrak{X}^+_{\sigma}(r)+ \log r)
\end{align}
fails to hold.
Here, $b_{n-1,1}$ and $b_{n-1,2}$ are computable positive real numbers.
 On the other hand, since $H_{n-1}(u_1,\ldots,u_n)\not\equiv 0$, the polynomial $P(X_{n-1,n})$
has neither monomial nor repeated irreducible factors, and thus,  similar to \eqref{truncate}, we have
\begin{align}\label{truncate0}
N_{\gcd, \sigma}(P(\beta ), D_{\beta}(P)(\beta ),r)\ge N_{P(\beta ), \sigma}(0,r)-N^{(1)}_{P(\beta ),  \sigma}(0,r).
\end{align}
By Proposition \ref{gcdn1}, there is a positive constant $C_1$ independent of $\beta$ such  that
\begin{align}\label{lastgcd}
N_{\gcd, \sigma}(P(\beta ), D_{\beta}(P)(\beta ),r)\le  C_1(T_{ P, \sigma}(r)+T_{D_{\beta}(P), \sigma}(r)).  
\end{align} 
Similar to the arguments for \eqref{heightDG},  after further shrinking $r_j$ according to $P$,  we find  a positive constant $C_2$ independent of $\beta$ such  that
 \begin{align}\label{lastcoefficient}
 T_{ P, \sigma}(r)+T_{D_{\beta}(P), \sigma}(r)\le_{\rm exc} C_2(\mathfrak{X}^+_{\sigma}(r)+\log r) + O( \log    T_{\beta}(r)).
\end{align}  
Hence,
 \begin{align}
N_{P(\beta),  \sigma }(0,r)-N^{(1)}_{P(\beta),  \sigma}(0,r)\le_{\rm exc}  C_1C_2 (\mathfrak{X}^+_{\sigma}(r)+\log r)+  O( \log    T_{\beta}(r) ),
\end{align} 
which contradicts  that fact  that  \eqref{Ru3} fails to hold by taking $c_0\ge C_1C_2 $.
 \end{proof}

 \begin{theorem}\label{proxin}  Let ${\mathcal Y}\subset {\Bbb C}$ be given as above  with   $\sigma$ being associated to the  discs  $\{{\mathbb D}(a_j,  2r_j )\}$.
 Let $G=\sum_{\mathbf i\in I_G}\alpha_{\mathbf i}{\mathbf x}^{\mathbf i}$ be a non-constant   polynomial   in $\mathbb C[x_1,\cdots, x_n]$.  Assume that $G(0,\hdots,0)\ne 0$ and $\deg_{x_i}G=\deg G=d$ for $1\le i\le n$.
  Let $f_0,\dots,f_n$ be  entire functions without common zero which are  units  on $\mathcal Y$. 
 Let $u_i:=f_i/f_0$, $1\le i\le n$, and ${\bf u}=(u_1, \dots, u_n)$.  Assume that  $\sum_{\mathbf i\in J}\alpha_{\mathbf i}{\mathbf u}^{\mathbf i}\ne 0$ for any  non-empty subset  $J$  of $I_G$.
 Then, for every $\epsilon>0$, by shrinking $r_j$ small enough (depending on $\epsilon$, ${\bf u}$ and $G$), we have
 \begin{align}\label{multizeroPara}
 N_{G({\bf u}), \sigma}(0,r)\ge_{\exc}  (\deg G-\epsilon) \cdot \max_{1\le i\le n}\{T_{u_i} (r)\}-c (\mathfrak{X}^+_{\sigma}(r)+ \log r),
 \end{align}
where $c= \frac12 \binom{n+d}{n}  (\binom{n+d}{n}+1)$. 
\end{theorem}  

\begin{proof} 
 Since  $G(0,\hdots,0)\ne 0$ and $\deg_{x_i}G=\deg G=d$ for $1\le i\le n$, we may assume without loss of generality that $G(0,\hdots,0)=1$, and write
 \begin{align}\label{expressG}
G(\mathbf u) =1+ \sum_{1\le i\le n}\alpha_{\mathbf i_i}u_i^d+\sum_{\mathbf i\in I_G\setminus I }\alpha_{\mathbf i}{\mathbf u}^{\mathbf i} 
\end{align}
where $\alpha_{\mathbf i_i}\ne 0$ for $0\le i\le n$ and $I=\{  \mathbf i_0:=(0,0,\hdots,0),\mathbf i_1:=(d,0,\hdots,0),\hdots, \mathbf i_n:=(0,\hdots,0,d)\}$.
 Since  $\sum_{\mathbf i\in J}\alpha_{\mathbf i}{\mathbf u}^{\mathbf i}\ne 0$, for any  non-empty subset  $J$  of $I_G$,
we may use Theorem~\ref{trunborel} to show that  
\begin{equation*}
		 d \max_{1\le i\le n}\{T^M_{u_i,\sigma} (r)\} \le   N_{G(\mathbf u),\sigma}(0,r)+c  (\mathfrak{X}^+_{\sigma}(r)+ \log r) + c^+_{ h_{\mathbf u}, \sigma}(r)+O(\max_{1\leq i\leq n}\log T_{u_i} (r)),
\end{equation*}
where $c= \frac12 \binom{n+d}{n}  (\binom{n+d}{n}+1)$,    and $h_{\mathbf u}$ is a meromorphic function depending on ${\bf u}$.   By Lemma \ref{Ch},  by shrinking $r_j$ small enough if necessary, we can make
$ c^+_{ h_{\mathbf u}, \sigma}(r)\leq O(1)$. In addition, we  can shrink $r_j$ small enough such that (\ref{Amero}) and the Proposition \ref{exhuastion} for $u_i, 1\leq i\leq n$, hold, i.e., $|T^M_{u_i,\sigma} (r)- T_{u_i} (r)|\leq \log r+O(1)$.
This proves the theorem.
\end{proof}

\begin{theorem}[The abc Theorem on parabolic Riemann surfaces]\label{mainthm} Let ${\mathcal Y}\subset {\Bbb C}$ be given with   $\sigma$ being associated to the discs $\{{\mathbb D}(a_j,  2r_j )\}$.  
Let $G$ be a non-constant   polynomial in $\mathbb C[x_1,\hdots,x_n]$ with no monomial  factors and no repeated factors.
Then, for any $\epsilon>0$,   there exist a positive real number $c_0$, a proper Zariski closed subset $W$ of $\mathbb{A}^n(\mathbb C)$ expressible as the zero locus of a finite set
$\Sigma\subset \mathbb C[x_1,\ldots,x_n]$ with the following property: 
   Let $f_0,\dots,f_n$ be entire functions without common zeros, which are units on $\mathcal{Y}$.
For $1 \le i \le n$, set $u_i := f_i/f_0$ and write ${\bf u} = (u_1,\dots,u_n)$.
Assume that $H({\bf u}) \neq 0$ for all $H \in \Sigma$.  Then,  by shrinking $r_j$ small enough (depending on $\epsilon$, ${\bf u}$ and $G$), we have
 \begin{enumerate}
 \item[{\rm(a)}]  $N_{G(\mathbf u),\sigma}(0,r)-N^{(1)}_{G(\mathbf u),\sigma}(0,r)) \le_{\rm exc} \epsilon\max_{1\le i\le n}\{T_{u_i}(r)\} +c_0(\mathfrak{X}^+_{\sigma}(r)+ \log r).$
 \item[{\rm(b)}]  If $G(0,\hdots,0)\ne 0$ and $\deg_{x_i}G=\deg G=d$ for $1\le i\le n$, then 
 \begin{align*} 
  (\deg G -\epsilon)\cdot  \max_{1\le i\le n}\{T_{u_i}(r)\}\leq_{\rm exc}  N^{(1)}_{G(\mathbf u),\sigma}(0,r)) + c_1(\mathfrak{X}^+_{\sigma}(r)+ \log r).
 \end{align*}
  \end{enumerate}
  Here, $c_0$   and  $c_1$  can be  effectively bounded from above in terms of $\epsilon$, $n$, and the degree of $G$.    Moreover,  the finite set
$\Sigma\subset \mathbb C[x_1,\ldots,x_n]$ satisfies: {\rm(Z1)} $\Sigma$ depends on $\epsilon$ and $G$, but not on ${\bf u}$ or $\mathcal Y$, and can be determined explicitly,  {\rm(Z2)} $\vert \Sigma\vert$ and the degree of each polynomial in $\Sigma$ can be effectively bounded from above in terms of $\epsilon$, $n$, and the degree of $G$.  
 \end{theorem}
\begin{proof}
The proof  follows from Theorem \ref{mainthmK} and Theorem \ref{proxin}.
 Let $W = Z \cup Z_1 \cup [G=0]$, where $Z$ and $Z_1$ are the exceptional sets arising from 
Theorem~\ref{mainthmK} and Theorem~\ref{proxin} for $\frac{\epsilon}{2}$, respectively.  
 We express the zero locus of $W$ as the vanishing set of a finite collection 
$\Sigma \subset \mathbb{C}[x_1,\ldots,x_n]$, and assume that 
$H({\bf u}) \ne 0$ for all $H \in \Sigma$.
 By Theorem \ref{mainthmK}, we obtain 
 $$
 N_{G(\mathbf u),\sigma}(0,r)-N^{(1)}_{G(\mathbf u),\sigma}(0,r))\le\frac \epsilon2\max_{1\le i\le n}\{T_{u_i}(r)\} +c_0(\mathfrak{X}^+_{\sigma}(r)+ \log r).
 $$
 On the other hand, from  \eqref{multizeroPara} in Theorem \ref{proxin}, 
$$N_{G({\bf u}), \sigma}(0,r)\ge_{\exc}  (\deg G-\epsilon) \cdot \max_{1\le i\le n}\{ T_{u_i} (r) \}-c (\mathfrak{X}^+_{\sigma}(r)+ \log r).$$ 
Thus we derive
\begin{align}\label{N1} 
N^{(1)}_{G(\mathbf u),\sigma}(0,r))\ge_{\exc}  \left(\deg G- \frac{\epsilon}2\right) \max_{1\le i\le n}\{T_{u_i}(r)\}-c(\mathfrak{X}^+_{\sigma}(r)+ \log r) .
\end{align}
Let $c_1=c_0+c$,  then 
\begin{align*}  
N^{(1)}_{G(\mathbf u),\sigma}(0,r))\ge_{\exc}  (\deg G- \epsilon)\cdot \max_{1\le i\le n}\{T_{u_i} (r)\}+c_1(\mathfrak{X}^+_{\sigma}(r)+ \log r).
\end{align*}
\end{proof}

\section{Proof of the main theorem}\label{proofmain}
\begin{proof}[Proof of  the Main Theorem]
Let $f=(f_0,\hdots,f_n):\mathbb C\mapsto\mathbb P^n(\mathbb C)$, where $f_0,\hdots,f_n$ are entire functions without common zero. We may assume $f_i$ is not identically zero by letting $Z$ contain  all coordinate hyperplanes. When $f$ is rational,  the assertion follows directly from Theorem \ref{rational}, as in this case we have 
 $$
 N_{G(f)}(0,r)=N (G(f))\log r, \quad
 N^{(1)}_{G(f)}(0,r)=N^{(1)} (G(f))\log r, \quad\text{ and }  T_{f}(r)=(\deg f)\log r
 $$
 for $r$ sufficiently large.  Therefore, it remains to consider the situation when $f$ is not rational.  In this case, we have
 $$\log r=o(T_f(r)).$$
 
Let $D:=\sum_{i=0}^n H_i$, where $H_i:=[x_i=0]$, $0\le i\le n$, are the coordinate hyperplanes.  We consider  ${\mathcal Y_{f}}:={\Bbb C}\backslash  f^{-1}(D)$ with 
  $\sigma$ being associated to the discs $\{{\mathbb D}(a_j,  2r_j )\}$, as being constructed in Section  \ref{Jensen}.
  Furthermore, by further shrinking $r_j$, we may assume that 
\begin{align}\label{charCompare2}
|T_{u_i}(r)- T^{M}_{ u_i, \sigma}(r)|\le \log r + O(1)
\end{align}
for $1\le i\le n$ by \eqref{Amero} and Proposition \ref{exhuastion} (ii), 
and 
\begin{align}\label{estexhausion}
 \frak X_{\sigma}^+(r)\le   \sum_{i=0}^n  N_f^{(1)}(H_i,r)+O(\log r)   
\end{align}
by \eqref{sigmaestimae}. 
Since  ${\rm mult}_{z_0}(f^*H_i) \ge  \ell$ for all $i$ with $0\leq i\leq n$ and
for every $z_0 \in {\Bbb C}$, we have 
 $N_f^{(1)}(H_i,r) \leq {1\over \ell} T_f(r)$ for $i=0, 1, \dots, n$.  Thus 
 \begin{align}\label{estexhausion2}
 \frak X_{\sigma}^+(r)\le \frac{n+1}{\ell} T_f(r)+ O(\log r),
\end{align}
where $\ell$ is a positive integer to be determined.

   Let $u_i:=\frac{ f_i}{ f_0}$. 
Then $u_i, 1\leq i\leq n$,  are units on  $\mathcal Y_f $.  
  Let  $\tilde G\in\mathbb C[x_1,\hdots,x_n]$ be defined by  $\tilde G(x_1,\hdots,x_n)=G(1,x_1,\hdots,x_n)$.
Then $f_0^{\deg G}\cdot  {\tilde G}({\mathbf u})= G(f).$
Hence it is obvious that
 \begin{align}\label{compareCounting}
  N^{(1)}_{\tilde G( {\mathbf u}),\sigma}(0,r)=N^{(1)}_{G(f),\sigma}(0,r).
 \end{align}   
Let $\epsilon>0$, and let  $W$ be the proper Zariski closed subset $W$ of $\mathbb A^n$ determined in  Theorem \ref{mainthm}.
 We express the zero locus of $W$ as the vanishing set of a finite collection 
$\Sigma \subset \mathbb{C}[x_1,\ldots,x_n]$, and assume that 
$H({\bf u}) \ne 0$ for all $H \in \Sigma$.
 Then, by Theorem \ref{mainthm}, together with \eqref{charCompare2} and \eqref{compareCounting}, we obtain, after shrinking  $r_j$ if necessary, that there exists a positive real $c_0$ (depending only on $\epsilon$, $n$, and $\deg G$) such that 
 \begin{align}\label{simplezero}   N_{G(f),\sigma}(0,r)-N^{(1)}_{G(f),\sigma}(0,r) \le_{\exc}\frac{\epsilon}3\max_{1\le i\le n}\{T_{u_i}(r)\} +c_0(\mathfrak{X}_{\sigma}^+(r)+\log r),
\end{align}
and 
\begin{align}\label{truncatezero} 
 \left(\deg G -\frac{\epsilon}2\right)\cdot  \max_{1\le i\le n}\{T_{u_i}(r)\}\le_{\exc}  N^{(1)}_{G(f),\sigma}(0,r)+c_0(\mathfrak{X}_{\sigma}^+(r)+\log r).
 \end{align}
 Using  \eqref{estexhausion2}  and the fact that $\log r=o(T_f(r))$, by taking $\ell >3c_0(n+2)\epsilon^{-1}$,
 we have that
\begin{align}\label{c0X} 
c_0(\mathfrak{X}_{\sigma}^+(r) +\log r)  \le {\epsilon/3}\max_{1\le i\le n}\{T_{u_i}(r)\}.
\end{align}
 Since $B_t^{\sigma}\subset \mathbb{D}_t$ for any $t\ge 1$, $N^{(1)}_{G(f),\sigma}(0,r)\le  N^{(1)}_{G( f)}(0,r)$.
 It follows immediately  from  \eqref{truncatezero} and \eqref{c0X}  that  
  \begin{align}\label{truncatef} 
N^{(1)}_{G( f)}(0,r)\ge_{\rm exc}   \left(\deg G -\frac{5\epsilon}6\right)\cdot  \max_{1\le i\le n}\{T_{u_i}(r)\}\ge   (\deg G - \epsilon )T_{ f }(r).
 \end{align}
 This proves (ii).

  It remains to prove (i), which follows from (ii), i.e., from \eqref{truncatef}  together with the First Main Theorem.
\begin{align*}
N_{G( f)}(0,r)-N^{(1)}_{G( f)}(0,r)
&\le_{\exc}  \deg G\cdot  T_f(r)-(\deg G - \epsilon )T_{ f }(r)\le_{\exc} \epsilon T_f(r).
\end{align*}
 
  Finally, we can conclude the proof by taking $Z$ to be the Zariski closure of $W$ in $\mathbb P^n({\Bbb C})$.
 \end{proof}
\section{Toric Varieties}\label{toricSection}
\begin{proof}[Proof of Theorem \ref{toric}]
We recall the following  setup  of finding a natural finite open covering of $X$ from the proof of \cite[Theorem 4.4]{levin_gcd}.
Let $\Sigma$ be the fan corresponding to the smooth projective toric variety $X$. Then there is a finite affine  covering $\{X_{\eta} \}$ of $X$, where $\eta\in \Sigma$ is a n-dimensional smooth cone with an isomorphism $i_{\eta}: X_{\eta}\to\mathbb A^n$. 
 This isomorphism restricts to an automorphism of $\mathbb G_m^n$, where we identify $\mathbb G_m^n\subset X_{\eta}$ naturally as a subset of $X$ and 
$\mathbb G_m^n\subset\mathbb A^n$ in the standard way such that $\mathbb A^n\setminus \mathbb G_m^n$ consists of the affine coordinate hyperplanes $\{x_i=0\}$, $1\le i\le n$.  
Under this identification, an  entire orbifold curve $f$ into $(X,\Delta)$ corresponds to a collection of $n$-tuples of meromorphic functions ${i}_{\eta}(f)=(u_{\eta,1},\hdots,u_{\eta,n})$ such that the zero multiplicity of each $u_{\eta,i}$, $1\le i\le n$, is at least $\ell$ if not zero.  If  $z_0\in\mathbb C$ is a pole of some  $u_{\eta,i}$, $1\le i\le n$, then $f(z_0)$ is in a component of $D_0$, the boundary of $X$, with multiplicity
$$
-\min\{0,v_{z_0}(u_{\eta,1}),\hdots, v_{z_0}(u_{\eta,n})\} \ge \ell.
$$ 
Let $ \widetilde{ {i}_{\eta}}: X_{\eta}\to\mathbb P^n({\Bbb C})$ be the composition  of $i_{\eta}$ with the natural embedding of $\mathbb A^n$ to $\mathbb P^n({\Bbb C})$. 
Then $ \widetilde{ {i}_{\eta}}(f)=(f_{\eta,0},\hdots,f_{\eta,n})  =[1:u_{\eta,1}:\cdots:u_{\eta,n}]:\mathbb C\to \mathbb P^n({\Bbb C})$,  
where $ f_{\eta,0},\hdots,f_{\eta,n}$ are entire functions with no common zero.
Furthermore, the pullback $(\widetilde{ {i}_{\eta}}^{-1})^*(D|_{X_{\eta}})$ of $D$ to $\mathbb P^n({\Bbb C})$ is defined by some nonconstant homogeneous polynomial $F_{\eta}\in \mathbb C[x_0,\hdots,x_n]$ with no monomial and repeated factors and   $\deg_{X_i}F_{\eta}=\deg F_{\eta}$ for each $0\le i\le n$ since $D$ is reduced and is in general position with the boundary of $\mathbb G_m^n$ in $X$.   
 By  \cite[Proposition 12.11]{Vojta}, there exist a non-zero constant  $c_{\eta,A}$ and  a proper closed subset $Z_{\eta,A}\subset X_{\eta}$, depending on $\eta$ and  $A$ such that 
\begin{align}\label{htsigma}
 T_{ \widetilde{ {i}_{\eta}}(f) } (r)\le c_{\eta,A} T_{A,f}(r)+O(1) 
\end{align}
if the image of ${i}_{\eta}(f)$ is not contained in  $Z_{\eta,A}$.
Let $M$ be the number of $X_{\eta}$ in $\Sigma$.  We may now apply the Main Theorem  to each $X_{\eta}$.  Let $\epsilon>0$.  Then there exists a proper Zariski closed subset  $W_{\eta}\subset X_{\eta}$ 
for each $\eta\in \Sigma$  such that for sufficiently large integer $\ell$,  we have
\begin{align}\label{multieta}
N_{F_{\eta}(\widetilde{{i}_{\eta}}(f))}(0,r)- N^{(1)}_{F_{\eta}(\widetilde{{i}_{\eta}}(f))}(0,r)  
\le_{\rm exc}  \frac{\epsilon}{Mc_{\eta,A}} T_{ \widetilde{ {i}_{\eta}}(f) } (r),
\end{align}
and 
\begin{align}\label{truncateeta}
N^{(1)}_{F_{\eta}(\widetilde{{i}_{\eta}}(f))}(0,r) 
\ge_{\rm exc}   (\deg F_{\eta}-\frac{\epsilon}{Mc_{\eta,A}}) T_{ \widetilde{ {i}_{\eta}}(f) } (r),
\end{align}
for all   $f$ such that  the image of $f$ restricting to $X_{\eta}$ is not contained in $W_{\eta}$.  
 Since $\deg_{X_i}F_{\eta}=\deg F_{\eta}$ for each $0\le i\le n$, it implies that 
$\deg F_{\eta}\cdot T_{ \widetilde{ {i}_{\eta}}(f) } (r)=T_{F_{\eta}(\widetilde{{i}_{\eta}}(f))}(r)+O(1).$
Then by the first main theorem and \eqref{truncateeta}, we have 
\begin{align}\label{proxi1}
m_{F_{\eta}(\widetilde{{i}_{\eta}}(f))}(0,r) 
\le_{\rm exc} \frac{\epsilon}{Mc_{\eta,A}}  T_{ \widetilde{ {i}_{\eta}}(f) } (r) .
\end{align}

Let $W$ be the union of  $W_{\eta}$ and $Z_{\eta,A}$ for $\eta\in\Sigma$.  Then
 \begin{align}\label{countingD}
N_f(D,r)-N_f^{(1)}(D,r)\le \sum_{\eta\in \Sigma}N_{F_{\eta}(\widetilde{{i}_{\eta}}(f))}(0,r)-N^{(1)}_{F_{\eta}(\widetilde{{i}_{\eta}}(f))}(0,r) \le_{\rm exc}   \epsilon T_{A,f}(r),
\end{align}
and
\begin{align}\label{proxi2}
 m_f (D,r)\le  \sum_{\eta\in \Sigma} m_{F_{\eta}(\widetilde{{i}_{\eta}}(f))}(0,r) \le_{\rm exc} \epsilon T_{A,f}(r)
\end{align}
for all orbifold curve $f:\mathbb C\to  (X,\Delta)$ with multiplicity at least $\ell $ along $D_0$ and $f(\mathbb C)\not\subset W$.
Finally, the assertion (ii) follows  from \eqref{countingD} and the following inequality:
\begin{align*} 
N_f(D,r)=T_{D,f}(r)-m_f (D,r)  \ge_{\rm exc}T_{D,f}(r)-\epsilon T_{A,f}(r)
\end{align*}
by \eqref{proxi2}.
\end{proof}

\begin{proof}[Proof of Theorem  \ref{finitemorphismtoric}]
	 Let $ \pi:   Y\to X$ be a finite morphism.  
We recall from \cite[Lemma 1]{CZ2013} and the proof of \cite[Theorem 6]{GGW} that $R$ is a big divisor since  $(Y,\Delta)$ is   of  general type.
Let $R_0$ be an irreducible component of $R$. Without loss of generality, we let $R_0=R$.  Otherwise, we simply repeat the following steps for each irreducible component.  Let $D=\pi(R)$, which is in general position with $D_0$ by assumption.
Now, consider $f : \mathbb C \to  (Y,\Delta) $  as a nonconstant orbifold entire curve.	 
Since $  \pi^*D$ has multiplicity at least 2 along $R$, we have
\begin{align}\label{coutingzero1}
		N_{f}( R,r )\le N_{\pi\circ f}( D,r )-N^{(1)}_{\pi\circ f}(D,r ).
	\end{align}	
 
We are now in a position to apply Theorem \ref{toric} for a small $\epsilon >0$ (to be determined later) and an ample divisor $A$ on $X$.  Consequently, there exists a positive integer $\ell_1$ and a proper Zariski closed subset $W_1$ of $X$ such that if the orbifold $ (X,\Delta_0)$  has  multiplicities at least $\ell_1$ along $D_0$,	then  
\begin{align}\label{gcdF}
	N_{\pi\circ f}( D,r )-N^{(1)}_{\pi\circ f}(D,r ) \le_{\exc}  \epsilon T_{A,\pi\circ f}(r),
\end{align}
and 	
\begin{align}\label{countingF}
	 T_{D,\pi\circ f}(r) \le_{\rm exc} N_{\pi\circ f}^{(1)}(D,r) +\epsilon T_{A,\pi\circ f}(r) 
\end{align}
	if   the image of $\pi\circ f$ is not contained in $W_1$.
Assuming that $\ell\ge\ell_1$	 and the image of $f$ is not contained in $\pi^{-1}(W_1)$,
	then by   \eqref{coutingzero1} and \eqref{gcdF}, we have 
	\begin{align}\label{zeroZ}
		N_{f}( R,r )\le_{\exc} \epsilon T_{A,\pi\circ f}(r). 
	\end{align}
 Since $  R \le   \pi^* (D)$ (as divisors),  the functorial property implies that    
\begin{align}\label{Fproxi2}
	 m_{ f}  (R,r) \le	m_{\pi\circ f}(D,r)   +O(1)=T_{D,\pi\circ f}(r)-N_{\pi\circ f}(D,r)  +O(1)
	 	\end{align}	
Together with 	\eqref{countingF}, we have:
\begin{align}\label{FproxiZ}
		m_{f}(R,r) \le\epsilon T_{A,\pi\circ f}(r) +O(1).
	\end{align}	
Combining the two equations     \eqref{zeroZ} and \eqref{FproxiZ} and using the functorial property of characteristic functions, we have: 
\begin{align}\label{charZ}
		T_{R,f}(r) \le 2\epsilon  T_{\pi^*A,  f}(r) +O(1).
	\end{align}	 	
	 	Since $R$ is a big divisor on $Y$, by \cite[Proposition 12.11]{Vojta}, there exists a constant $c $  and a   proper Zariski-closed subset   $W_0$ of $Y$, depending only on $A$ and $R$, such that 
such that 
	\begin{equation}\label{bdd_ample}
		 T_{\pi^*A,  f}(r) \le  c T_{R,f}(r)+O(1),
		 \end{equation}	
	if the image of $\mathbf{f}$ is not contained in $W_0$.
	Combining   \eqref{bdd_ample} with  \eqref{charZ},  we obtain
	$$
	T_{R,f}(r) \le_{\exc} 2 c \epsilon T_{R,f}(r) +O(1).
	$$
	This is not possible  as  $c$ is independent of   $\epsilon $, and $\epsilon $ can be taken sufficiently small.  In conclusion, we have found a sufficiently large integer $\ell$ such that if $(Y,\Delta)$  has  multiplicities at least $\ell$ along $D_0$, then the image of any orbifold entire curve $f$ on $(Y,\Delta)$ is contained in 
	$\pi^{-1}(W_1)\cup W_0$. 
	\end{proof}
\noindent\textbf{Acknowledgements.} 
We sincerely thank the referees for their careful reading of the manuscript and for their many valuable comments and suggestions. We also thank the first author's student, Zhe Wang, for helpful discussions and valuable suggestions.

\end{document}